\documentclass[11pt,reqno]{amsart}

\theoremstyle{plain}
\newtheorem{theorem}{Theorem}[section]
\newtheorem{lemma}[theorem]{Lemma}

\newtheorem{proposition}[theorem]{Proposition}

\newtheorem{definition}[theorem]{Definition}
\newtheorem{example}[theorem]{Example}

\newtheorem{remark}[theorem]{Remark}
\theoremstyle{remark}

\newcommand{\C}{C^{\infty}}
\newcommand{\R}{\mathbb{R}}
\newcommand{\Z}{\mathbb{Z}}
\newcommand{\sh}{\mathcal{O}}
\newcommand{\ev}{_{\overline{0}}}
\newcommand{\od}{_{\overline{1}}}

\usepackage{amssymb,amsmath,amsthm,mathrsfs}
\usepackage{epsfig,amssymb,amsmath,amsthm,graphicx,psfrag}
\usepackage{MnSymbol}
\usepackage{tikz-cd} 
\usepackage{enumerate}
\usepackage{comment}
\usepackage{enumitem}
\usepackage{array}
\usepackage{multirow}
\usepackage[hyperindex,bookmarksnumbered,plainpages]{hyperref}

\title{The structure of $\C$-superschemes}

\author{Cristian Danilo Olarte}

\address{Instituto de Matemáticas, FCEyN, Universidad de Antioquia, 50010  Medellín, Colombia}

\email{cristian.olarte@udea.edu.co}

\author{Pedro Rizzo} 

\email{pedro.hernandez@udea.edu.co}

\author{Alexander Torres-Gomez}
 
\email{galexander.torres@udea.edu.co}

\begin{document}

\begin{abstract}
This paper establishes a structural generalization of Batchelor’s theorem within the framework of $C^\infty$-superschemes. Our main result proves that any Batchelor space satisfies a global splitness condition, establishing an isomorphism between the structure sheaf and its associated graded sheaf. Although this isomorphism is non-canonical, the existence of a splitting endows the structure sheaf with a natural $\mathbb{Z}_{\geq 0}$-grading. This grading is shown to be equivalent to the data of an even superderivation, which we term an Euler vector field. Consequently, global splittings of $C^\infty$-superspaces can be characterized in terms of Euler vector fields, providing a differential-geometric formulation of the splitting.\\

\noindent \textsc{Key words:} $\C$-Superrings, $\C$-Superschemes, Batchelor's theorem, Euler Vector Fields.
\end{abstract}

\subjclass[2020]{Primary: 13Nxx, 58A50, 17A70, 14A22; Secondary: 14M30}

\maketitle

\noindent

\tableofcontents

\section{Introduction}

In a seminal paper \cite{Batchelor}, Batchelor proved that every smooth supermanifold is globally split. This means its structure sheaf is of the form $\bigwedge_{C^\infty_M} \mathcal{E}$, where $\mathcal{E}$ is a locally free sheaf over the sheaf of smooth functions on a smooth manifold $M$. Consequently, every smooth supermanifold is non-canonically isomorphic to a ``twisted" (or non-commutative) version of a standard vector bundle over a smooth manifold. In this paper, our primary goal is to generalize Batchelor’s theorem to $C^\infty$-superspaces, which extend the class of smooth supermanifolds.\\

A $C^\infty$-superspace \cite[Definition 4.26]{ORTG} is a ringed space consisting of a topological space equipped with a sheaf of $C^{\infty}$-superrings. These are the geometric spaces associated with $C^{\infty}$-superrings (see \cite[Definition 3.1]{ORTG} and \cite[Definition 1.4.1]{Yau1}). Broadly speaking, $C^{\infty}$-superspaces enlarge the class of supermanifolds by allowing the structure sheaf to contain purely even nilpotent sections arising from the underlying $C^{\infty}$-ringed structure.\\

The notion of a $C^{\infty}$-superring emerges as a synthesis of superrings and $C^\infty$-rings. Formally, it is a superring whose even part is endowed with the structure of a $C^\infty$-ring \cite[Definition 3.1]{ORTG}. On one hand, superrings—and their associated geometric spaces, such as supermanifolds and superschemes—are non-commutative structures that are relatively straightforward to handle, providing a general perspective on non-commutative geometric spaces. For a comprehensive discussion and relevant references, see \cite{CCF, BRUZZO, Manin, Westra, Noja}. On the other hand, $C^\infty$-rings are $\mathbb{R}$-algebras equipped with $n$-ary operations parameterized by smooth real-valued functions on $\mathbb{R}^n$. The $C^\infty$-ring framework allows for the simultaneous study of smooth and singular spaces and enables the description of nilpotent objects often absent in traditional differential geometry. Furthermore, the category of $C^\infty$-schemes possesses superior categorical properties compared to the category of smooth manifolds; for instance, all fiber products exist. For further reading on $C^\infty$-rings and $C^\infty$-spaces, see \cite{J, MR, D2, Ler1, Olarte}.\\

Other approaches combining $C^{\infty}$-rings and superrings have been proposed by Carchedi and Roytenberg \cite{Carchedi}, as well as Nishimura \cite{NISHIMURA} and Yetter \cite{YETTER}. While Carchedi and Roytenberg provided a powerful categorical approach via ``super Fermat theories," its complexity can be daunting for geometrically oriented researchers. Conversely, Nishimura and Yetter focused primarily on constructing a theory of synthetic differential supergeometry.\\

In \cite{ORTG}, we developed an algebro-geometric framework for $C^{\infty}$-superrings and $C^{\infty}$-superschemes. A key result of that work was the equivalence between the category of fair $C^{\infty}$-superrings and the category of fair affine $C^{\infty}$-superschemes. While this mirrors results in classical commutative algebraic geometry, a current challenge in studying $C^{\infty}$-superschemes is that many algebraic concepts and constructions for $C^\infty$-superrings remain undefined. Consequently, we must not only apply algebraic constructions to understand geometric spaces but also develop the underlying algebra of these objects.\\

The central result of this paper is a structural generalization of Batchelor’s theorem for $C^\infty$-superspaces. In analogy with the theory of supermanifolds, the isomorphism between the structure sheaf of a $C^\infty$-superspace and the exterior algebra of its fermionic sheaf—referred to as the global splitting condition—is inherently non-canonical. The collection of such splittings is equivalently characterized by the existence of an even superderivation that is adapted to the filtration induced by the sheaf of ideals generated by all odd nilpotent sections (see \cite{Koszul}); we refer to this as an Euler vector field. We define and investigate this class of derivations, showing that they provide a natural characterization of the $\mathbb{Z}_{\geq 0}$-grading induced by the splitting of the structure sheaf. Consequently, a split $C^\infty$-superspace may be formally understood as a triple $(X, \mathcal{O}_{\mathscr{X}}, \mathcal{E})$, consisting of a topological space $X$, a structure sheaf of $C^\infty$-superrings $\mathcal O_{\mathscr X}$, and a specified Euler vector field $\mathcal E$.\\

We introduce the notion of a Batchelor space, defined as a $C^\infty$-superspace whose structure sheaf is both locally split and fine. These represent the minimal conditions required to prove the $C^\infty$-version of Batchelor’s theorem. The category of smooth supermanifolds is a subcategory of the category of Batchelor spaces, which in turn is a subcategory of the category of projected $C^\infty$-superspaces. Furthermore, we provide a systematic method for constructing $C^\infty$-superspaces to illustrate the boundaries of these classifications. Specifically, we construct: a $C^\infty$-superspace that is a supermanifold; a $C^\infty$-superspace that is a Batchelor space but not a supermanifold; a $C^\infty$-superspace that fails to be a Batchelor space; a $C^\infty$-superspace equipped with a fine sheaf that is not locally split; a non-projected fair $\C$-superspaces.\\

This paper is organized as follows. Section \ref{sec:two} provides a concise review of $C^\infty$-rings and $C^\infty$-superrings, addressing the compatibility of these structures with both superring theory and the $C^\infty$-ring framework. Section \ref{sec:three} presents our main findings in $C^\infty$-supergeometry, including the generalized Batchelor Theorem and the characterization of splittings via Euler vector fields.

\section{$C^\infty$-superrings}\label{sec:two}

In this section we introduce some fundamental properties of \( \C \)-superrings. For a more general context of the properties and theoretical implications of \( \C \)-superrings, we refer the reader to \cite{ORTG} where these structures are systematically analyzed. In a nutshell, a \( \C \)-superring is a novel and cohesive algebraic object that synthesizes the established frameworks of superrings and \( \C \)-rings. 

For the sake of completeness, before introducing the constructions related to $C^\infty$-superrings, we present a brief overview of superrrings and $C^\infty$-ring theory. 

For us, a superring is a $\Z_2$-graded unitary supercommutative ring. For a more comprehensive treatment and recent advances in the theory of superrings and its relation with supermanifolds see, e.g., \cite{Manin} and \cite{Westra}. Now, a \emph{\( C^\infty \)-ring} is a generalization of the algebra of smooth functions on a manifold. Using $\C$-rings instead of rings leads to $\C$-algebraic geometry. This foundational structure serves as the cornerstone for synthetic and derived differential geometry, allowing the study of smooth spaces in purely categorical or algebraic settings. For a comprehensive treatment of these concepts, we refer the reader to the seminal works of Dubuc \cite{D1, D2}, Moerdijk and Reyes \cite{MR, MRI}, and, more recently, Joyce \cite{J}.

Formally, a \emph{$C^\infty$-ring} is defined as a pair $(\mathfrak{C}, \{\phi_f\})$, consisting of a set $\mathfrak{C}$ and a collection of \emph{$n$-ary operations} $\{\phi_f: \mathfrak{C}^n \to \mathfrak{C} \mid f \in C^\infty(\mathbb{R}^n), n \in \mathbb{N}\}$, which preserve both projections and compositions. Specifically, if $p_i: \mathbb{R}^n \to \mathbb{R}$ is the $i$-th coordinate projection, then $\phi_{p_i}(c_1, \ldots, c_n) = c_i$ for all $(c_1, \ldots, c_n) \in \mathfrak{C}^n$. Furthermore, for any smooth functions $h_1, \ldots, h_n \in C^\infty(\mathbb{R}^m)$ and $g \in C^\infty(\mathbb{R}^n)$, the map associated with their composition satisfies $\phi_{g(h_1, \ldots, h_n)}(c_1, \ldots, c_m) = \phi_g(\phi_{h_1}(c_1, \ldots, c_m), \ldots, \phi_{h_n}(c_1, \ldots, c_m))$, for all $(c_1, \ldots, c_m) \in \mathfrak{C}^m$. 
 
A \textit{morphism} of $C^\infty$-rings from $(\mathfrak{C}, \{\phi_f\})$ to $(\mathfrak{D}, \{\psi_f\})$ is a set-theoretic map $\varphi: \mathfrak{C} \to \mathfrak{D}$ that commutes with the smooth operations, meaning $\varphi(\phi_f(c_1, \ldots, c_n)) = \psi_f(\varphi(c_1), \ldots, \varphi(c_n))$ for every $n \in \mathbb{N}$ and $f \in C^\infty(\mathbb{R}^n)$. Together, $C^\infty$-rings and their corresponding morphisms form a category denoted by $\mathbf{C^\infty Rings}$. 
 
Within this category, a $C^\infty$-ring $\mathfrak{C}$ is said to be \textit{finitely generated} if there exists a finite subset of generators $\{c_1, \ldots, c_n\} \subseteq \mathfrak{C}$ such that every element $c \in \mathfrak{C}$ can be expressed as $c = \phi_f(c_1, \ldots, c_n)$ for some $f \in C^\infty(\mathbb{R}^n)$. 
 
The archetypal example of a $C^\infty$-ring is the algebra $C^\infty(M)$ of smooth real-valued functions on a differentiable manifold $M$. This geometric instance is highly representative of the broader algebraic theory, as every finitely generated $C^\infty$-ring is isomorphic to a quotient of $C^\infty(\mathbb{R}^n)$ by an appropriate ideal. Beyond this classical setting, the theory contains a diverse range of examples originating in functional analysis, algebraic geometry, and differential topology, illustrating its structural flexibility. Among these structures, a $C^\infty$-ring $\mathfrak{R}$ is classified as \textit{fair} if it is both finitely generated and germ-determined. The latter condition requires the family of localization morphisms $\{\mathcal{L}_x: \mathfrak{R} \to \mathfrak{R}_x\}_{x \in X_{\mathfrak{C}}}$ to be jointly injective, meaning that $\bigcap\limits_{x \in X_{\mathfrak{C}}} \ker(\mathcal{L}_x) = \{0\}$.
 
Finally, we extend this framework to a geometric setting by defining a $C^\infty$-ringed space as a pair $(X, \mathcal{O}_X)$, where $X$ is a topological space and $\mathcal{O}_X$ is a sheaf of $C^\infty$-rings on $X$. For any point $p \in X$, the corresponding stalk $\mathcal{O}_{X,p}$, which is given by the direct limit of the $C^\infty$-rings of sections over open neighborhoods containing $p$, is also a $C^\infty$-ring.

We are now ready to introduce one of the main concepts of this article. 
\begin{definition}\label{def:superringnew}
A $\C$-superring is a superring $\mathfrak{R}=\mathfrak{R}\ev\oplus \mathfrak{R}\od$ whose even part $\mathfrak{R}\ev$ is endowed with a $\C$-ring structure that is compatible with the underlying ring structure of $\mathfrak{R}\ev$ (induced by $\mathfrak{R}$).
\end{definition}	

\begin{definition}
Let $\mathfrak R=\mathfrak R\ev \oplus \mathfrak R\od$ be a $C^\infty$-superring. The superideal $\mathfrak J_{\mathfrak R}$ generated by the odd part $\mathfrak R\od$, that is, $\mathfrak J_{\mathfrak R}=\mathfrak R\, \mathfrak R\od=\mathfrak R\od^2 \oplus \mathfrak R\od$, is called the canonical superideal of $\mathfrak R$. The quotient ring $\overline{\mathfrak R}= \mathfrak R/\mathfrak J_{\mathfrak R}$ is called the superreduced $\C$-ring of $\mathfrak R$.
\end{definition}

The superreduced $\C$-ring $\overline{\mathfrak R}= \mathfrak R/\mathfrak J_{\mathfrak R}$ is a commutative ring isomorphic to $\mathfrak R\ev/\mathfrak R\od^2$. Therefore, $\overline{\mathfrak R}$ inherits the $\C$-ring structure since it is a quotient of a $\C$-ring by an ideal. Furthermore, if $\mathfrak J_{\mathfrak R}$ is finitely generated, then $\mathfrak J_{\mathfrak R}/\mathfrak J_{\mathfrak R}^2$ is a finitely generated $\overline{\mathfrak  R}$-module.

\begin{definition}\label{def:smorph}
Let $\mathfrak{R}=\mathfrak{R}\ev\oplus \mathfrak{R}\od$ and $\mathfrak S=\mathfrak S\ev\oplus \mathfrak S\od$ be $\C$-superrings. A morphism of $\C$-superrings $\varphi:\mathfrak{R}\rightarrow \mathfrak S$ is a grade-preserving superring morphism such that its restriction to the even part $\varphi|_{\mathfrak{R}\ev}:\mathfrak{R}\ev\rightarrow \mathfrak S\ev$, is also a morphism of $\C$-rings.
\end{definition}

\begin{example}\label{ex:first}
As a first illustrative example of a $\C$-superring we consider $\C(\R^{1|2})$ the superring of smooth functions on the Euclidian superspace $\R^{1|2}$, that is, the superring of smooth functions extended by two anticommuting variables $\theta^1, \theta^2$. Its even part is the $\R$-algebra $\C(\R^{1|2})\ev$ whose elements are super polynomials of the form $F=f +g \theta^1\theta^2$, where $f,g\in\C(\R)$.

Given $F_i=f_i+g_i\theta^1\theta^2\in \C(\R^{1|2})\ev$ for $i=1,\ldots,n$ and a smooth function $h\in\C(\R^n)$, we define the associated $n$-ary operation $\phi_h$ by
\begin{equation}\label{eq:firstex}
\phi_h(F_1,\ldots,F_n)=h(f_1,\ldots,f_n)+\displaystyle{\sum_{j=1}^n\partial_jh(f_1,\ldots,f_n)g_j\theta^1\theta^2}. 
\end{equation}
By the chain rule, and because the derivatives of projection maps are constant, this formula defines a $\C$-ring structure on $\C(\R^{1|2})\ev$, that is compatible with its underlying multiplication. Indeed, we only verify the compatibility of the product, as the case for addition is straightforward. Then, taking $h(x,y)=xy$, to represent standard multiplication, we have $\partial_x h=y$, $\partial_y h=x$. Therefore, for $F_1=f_1 +g_1 \theta^1\theta^2$ and $F_2=f_2 +g_2 \theta^1\theta^2$, equation \eqref{eq:firstex} and the relation $(\theta^1\theta^2)^2 = 0$ yield
$$
\phi_h(F_1, F_2)=f_1f_2+f_2g_1\theta^1\theta^2+f_1g_2\theta^1\theta^2=(f_1+g_1\theta^1\theta^2)\cdot(f_2+g_2\theta^1\theta^2)=F_1\cdot F_2
$$
which proves the compatibility of the structures.
\end{example}

In fact, Example \ref{ex:first} admits a natural generalization. Given a $\C$-ring $\mathfrak{C}$ and a finitely generated $\mathfrak C$-module $\xi$ with basis $\{\theta^1,\ldots, \theta^q\}$, we can construct the polynomial superring over $\mathfrak{C}$. This superring is defined by the exterior algebra $\mathfrak{C}[\theta^1, \ldots, \theta^q]^{\pm}:=\bigwedge_{\mathfrak C} \xi$, where $\theta^1,\ldots, \theta^q$ are the odd variables over $\mathfrak C$. 

The proposition below establishes that $\mathfrak{C}[\theta^1, \ldots, \theta^q]^{\pm}$ inherits a natural $\C$-superring structure. To this end, we highlight the following key fact: every element $F\in\mathfrak{R}\ev$ can be uniquely written as $F=f+N$, where $f\in\mathfrak{C}$, $N=\sum\limits_{|I|:\,\text{even}} f^I\theta^I$, for $I\subseteq\{1,\ldots,q\}$ non-empty and $|I|$ denoting the cardinality of $I$. Note that $N\in \mathfrak{R}\od^2$. Let $f:=\overline{F}$ \emph{the superreduced of} $F$ and $N:=\mathrm{nil}(F)$ \emph{the nilpotent part of $F$ coming from the odd generators}.

\begin{proposition}\label{prop:superstruct}
Under the above notations, given $\mathfrak C$ a $\C$-ring and let $\xi$ be a finitely generated $\mathfrak C$-module with basis $\{\theta^1,\ldots,\theta^q\}$. The exterior algebra $\mathfrak{R}:=\mathfrak{C}[\theta^1, \ldots, \theta^q]^{\pm}=\bigwedge_{\mathfrak C} \xi$ carries a natural $\C$-superring structure defined as follows. For $h\in\C(\R^k)$ and $F_1,\ldots,F_k\in\mathfrak R\ev$, written as $F_j=f_j+N_j$, we define the $k$-ary operation
\begin{equation}\label{eq:taylorPhi1}
\Phi_h(F_1,\ldots,F_k):=\sum_{n\geq 0}\frac{1}{n!}\sum_{j_1,\ldots,j_n=1}^{k}\phi_{\partial_{j_1}\cdots\partial_{j_n}h}(f_1,\ldots,f_k)\;N_{j_1}\cdots N_{j_n}. 
\end{equation}
This construction satisfies the following properties:
\begin{enumerate}
\item[(i)] The sum \eqref{eq:taylorPhi1} is finite (terminating at $n=\lfloor q/2\rfloor$), and the collection $\{\Phi_h\}_{h\in\C(\R^k),\,k\in\mathbb N}$ equips $\mathfrak R\ev$ with a $\C$-ring structure.
\item[(ii)] This $\C$-ring structure is compatible with the underlying addition, ring and scalar multiplication on $\mathfrak R\ev$ induced by the exterior algebra $\bigwedge_{\mathfrak C}\xi$, that is, $\Phi_{x+y}=+$, $\Phi_{xy}=\,\cdot\,$ and $\Phi_{\lambda x}=\lambda\,(-)$ for every $\lambda\in\R$.
\end{enumerate}
In particular, $\mathfrak R=\mathfrak{C}[\theta^1,\ldots,\theta^q]^{\pm}$, equipped with the $k$-ary operations \eqref{eq:taylorPhi1}, is a $\C$-superring.  
\end{proposition}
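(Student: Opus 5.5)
The plan is to treat \eqref{eq:taylorPhi1} as a formal Taylor expansion of $h$ at $(f_1,\ldots,f_k)$ in the nilpotent directions $N_j$, and to reduce every axiom to a polynomial identity in the commuting nilpotents $N_j$ with coefficients given by the $\C$-ring operations of $\mathfrak C$. For finiteness in (i), each $N_j$ lies in $\bigoplus_{|I|\geq 2,\,|I|\text{ even}}\bigwedge^{|I|}\xi$. A product $N_{j_1}\cdots N_{j_n}$ therefore lies in $\bigwedge^{\geq 2n}\xi$, which vanishes once $2n>q$, so only $n\leq\lfloor q/2\rfloor$ contributes. Even elements of the exterior algebra commute, so the monomials do not depend on the order of the indices.

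Next come the projection axiom and (ii), which are direct computations. For $h=p_i$, only $n=0$ and $n=1$ contribute, giving $f_i+N_i=F_i$, because $\partial_i p_i=1$ and higher derivatives vanish. Here one uses that $\phi_{1}$ is the unit and $\phi_{0}=0$ in $\mathfrak C$. For $h=xy$ the sum stops at $n=2$:
\[
f_1f_2+f_2N_1+f_1N_2+\tfrac12(N_1N_2+N_2N_1)=(f_1+N_1)(f_2+N_2).
\]
This uses that $\phi_{\partial_j h}$ evaluated at $(f_1,f_2)$ is the corresponding polynomial in $\mathfrak C$, since $\mathfrak C$'s ring structure is induced by its $\C$-structure. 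Addition and scalar multiplication work the same way.

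The main obstacle is the composition axiom: for $g\in\C(\R^n)$ and $h_1,\ldots,h_n\in\C(\R^m)$ we need $\Phi_{g(h_1,\ldots,h_n)}=\Phi_g(\Phi_{h_1},\ldots,\Phi_{h_n})$. My approach is to avoid a direct multivariate Fa\`a di Bruno computation and instead prove a universal statement. Let $A=\mathfrak C[\epsilon]$ be any commutative $\mathfrak C$-algebra of "jets" truncated at order $r=\lfloor q/2\rfloor$. Define $T_h(f+N):=\sum_\alpha \frac{1}{\alpha!}\phi_{\partial^\alpha h}(f)N^\alpha$ for commuting nilpotents $N$ with $N^\alpha=0$ whenever $|\alpha|>r$.

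The chain rule for Taylor polynomials in $C^\infty(\R^m)$ says that the order-$r$ Taylor polynomial of $g\circ(h_1,\ldots,h_n)$ at $x$ equals the truncation of the composite of the Taylor polynomials. Written out, this is a family of identities
\[
\partial^\alpha(g\circ h)=P_\alpha\bigl((\partial^\beta g)\circ h,\ \partial^\gamma h_i\bigr),
\]
where each $P_\alpha$ is a universal polynomial with rational coefficients. Each side is an element of $C^\infty(\R^m)$ obtained by composing smooth functions. Applying the composition and projection axioms of $\mathfrak C$, the same identities hold among the $\phi_{\partial^\alpha(\cdot)}(f_1,\ldots,f_m)$. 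Since $N_{j_1}\cdots N_{j_n}$ form a truncated polynomial calculus in commuting variables, comparing coefficients of $N^\alpha$ on both sides yields the composition law.

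One subtlety must be handled when substituting $F'_i=\Phi_{h_i}(F)$ into $\Phi_g$. Its reduced part is $\phi_{h_i}(f)$ and its nilpotent part is $\sum_{\alpha\neq0}\frac1{\alpha!}\phi_{\partial^\alpha h_i}(f)N^\alpha$, which again lies in $\mathfrak R\od^2$. This is consistent with the uniqueness of the decomposition $F=\overline F+\mathrm{nil}(F)$. Once this is checked, $\mathfrak R\ev$ is a $\C$-ring compatible with the superring structure, and together with $\mathfrak R\od$ this gives a $\C$-superring in the sense of Definition \ref{def:superringnew}.
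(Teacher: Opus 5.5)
Your proposal is correct and follows essentially the same route as the paper. Finiteness comes from the vanishing of products of more than $\lfloor q/2\rfloor$ even nilpotents. Projections and the operations $x+y$, $\lambda x$, $xy$ are checked directly. The composition axiom uses the universal (Fa\`a di Bruno) polynomial identities among partial derivatives, transferred to $\mathfrak C$ through its $\C$-ring axioms and then expanded in the commuting nilpotents $N_j$. Your jet-style packaging (prove the identity in a truncated polynomial ring over $\mathfrak C$, then substitute $t_j\mapsto N_j$) is only a tidier bookkeeping of that same Fa\`a di Bruno argument.
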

\begin{proof}
Since each $N_j\in\mathfrak{R}\od^2$ is even, the factors in \eqref{eq:taylorPhi1} of $N_{j_1}\cdots N_{j_n}$ commute. Thus, the product depends only on the multiset of indices $\{j_1,\ldots,j_n\}$. Likewise, the operator $\partial_{j_1}\cdots \partial_{j_n}h$ is symmetric in its indices because mixed partial derivatives of smooth functions commute. Consequently, expression \eqref{eq:taylorPhi1} is well-defined. Moreover, the ideal $\mathfrak{R}\od^2$ is nilpotent with $\left(\mathfrak{R}\od^2\right)^{\lfloor q/2\rfloor+1}=0$, so any product $N_{j_1}\cdots N_{j_n}$ with $n>\lfloor q/2\rfloor$ vanishes. This establishes that the sum in \eqref{eq:taylorPhi1} is finite, proving the finiteness assertion in $(i)$. We next verify that \eqref{eq:taylorPhi1} defines a valid $k$-ary operation.

\begin{itemize}[leftmargin=*]
\item{\bf Projections are preserved:} Let $p_i:\R^k\to\R$ be the $i$-th coordinate projection map. Then $\partial_j p_i=\delta_{ij}$ is constant, so all higher-order derivatives of $p_i$ (of order $\geq 2$) vanish. Consequently, only the terms $n=0,1$ contribute to \eqref{eq:taylorPhi1}:
$$
\Phi_{p_i}(F_1,\ldots,F_k)=\phi_{p_i}(f_1,\ldots,f_k)+\sum_{j=1}^k \phi_{\delta_{ij}}(f_1,\ldots,f_k)\,N_j = f_i+N_i=F_i \, ,
$$
where we used the fact that $\mathfrak C$, being a $\C$-ring, preserves coordinate projections ($\phi_{p_i}(f_1, \ldots, f_k) = f_i$) and that $\phi_c(\cdot) = c$ for any constant $c \in \mathbb{R}$.

\item\textbf{Compositions are preserved:} Let $h\in\C(\R^m)$, $g_1,\ldots,g_m\in\C(\R^k)$. We must show that
$$
\Phi_{h(g_1,\ldots,g_m)}(F_1,\ldots,F_k)=\Phi_h\bigl(\Phi_{g_1}(F_1,\ldots,F_k),\ldots,\Phi_{g_m}(F_1,\ldots,F_k)\bigr).
$$
Write $G_i:=\Phi_{g_i}(F_1,\ldots,F_k)$. Applying \eqref{eq:taylorPhi1} to $g_i$, we obtain explicit expressions for its superreduced and nilpotent parts:
$$
\overline{G_i}=\phi_{g_i}(f_1,\ldots,f_k), \qquad
\mathrm{nil}(G_i)=\sum_{n\geq 1}\frac{1}{n!}\sum_{j_1,\ldots,j_n=1}^k \phi_{\partial_{j_1}\cdots\partial_{j_n}g_i}(f_1,\ldots,f_k)\,N_{j_1}\cdots N_{j_n} \, .
$$
Substituting these expressions into \eqref{eq:taylorPhi1} for $\Phi_h(G_1,\ldots,G_m)$ and collecting terms according to their total order $n = n_1 + \dots + n_r$ in the $N_j$'s, the coefficient of $N_{j_1}\cdots N_{j_n}$ (summed over the index distributions among the $r$ factors $\mathrm{nil}(G_{i_1}),\ldots,\mathrm{nil}(G_{i_r})$ with appropriate factorials) equals
$$
\sum_{r\geq 0}\frac{1}{r!}\sum_{i_1,\ldots,i_r=1}^m \phi_{\partial_{i_1}\cdots\partial_{i_r}h}\bigl(\phi_{g_1}(f_1,\ldots,f_k),\ldots,\phi_{g_m}(f_1,\ldots,f_k)\bigr)\cdot\Bigl[\text{coefficient of } N_{j_1}\cdots N_{j_n} \text{ in } \textstyle\prod_{l=1}^r \mathrm{nil}(G_{i_l})\Bigr].
$$
By the higher-order chain rule (Faà di Bruno's formula), this coincides, order by order in $n$, with $\phi_{\partial_{j_1}\cdots\partial_{j_n}[h(g_1,\ldots,g_m)]}(f_1,\ldots,f_k)$. Indeed, $\partial_{j_1}\cdots\partial_{j_n}[h(g_1,\ldots,g_m)]$ is a universal polynomial expression in the partial derivatives $\partial^\bullet h$ evaluated at $(g_1,\ldots,g_m)$ and those of $g_i$ evaluated at $(x_1,\ldots,x_k)$. Since $\mathfrak C$ is a $\C$-ring, applying $\phi$ to such a composite smooth function agrees with evaluating the corresponding polynomial in the images under $\phi$, by definition.  Therefore
$$
\Phi_h(G_1,\ldots,G_m)=\sum_{n\geq 0}\frac{1}{n!}\sum_{j_1,\ldots,j_n=1}^k \phi_{\partial_{j_1}\cdots\partial_{j_n}[h(g_1,\ldots,g_m)]}(f_1,\ldots,f_k)\,N_{j_1}\cdots N_{j_n}=\Phi_{h(g_1,\ldots,g_m)}(F_1,\ldots,F_k) \, ,
$$
as required.
\end{itemize}
This completes the proof of item $(i)$.

To prove item $(ii)$, namely, the compatibility with the ring structure, we first consider addition, given by $h(x,y)=x+y$. All partial derivatives of order $\geq 2$ vanish, so \eqref{eq:taylorPhi1} truncates at $n=1$:
$$
\Phi_{x+y}(F_1,F_2)=f_1+f_2+N_1+N_2=F_1+F_2 \, .
$$
Next consider scalar multiplication by $\lambda\in\R$, given by $h(x)=\lambda x$. Likewise, only the terms for $n=0,1$ contribute:
$$
\Phi_{\lambda x}(F)=\lambda f+\lambda N=\lambda F \, .
$$
Finally, let $h(x,y)=xy$ represent ring multiplication. Then, $\partial_x h=y$, $\partial_y h=x$, $\partial_x\partial_y h=\partial_y\partial_x h=1$, and all partial derivatives of order $\geq 3$ vanish. Thus, \eqref{eq:taylorPhi1} has exactly three contributions:
\begin{itemize}
\item The term $n=0$ yields $\phi_{xy}(f_1,f_2)=f_1f_2$. 
\item The term $n=1$ yields $\phi_{\partial_x h}(f_1,f_2)\,N_1+\phi_{\partial_y h}(f_1,f_2)\,N_2=f_2N_1+f_1N_2$.
\item The term $n=2$ summing over the two orderings $(j_1,j_2) = (1,2)$ and $(2,1)$ (each contributing $\phi_{\partial_x\partial_y h}(f_1,f_2) = 1$), yields
$$
\frac{1}{2!}\Bigl(\phi_{\partial_1\partial_2 h}(f_1,f_2)\,N_1N_2+\phi_{\partial_2\partial_1 h}(f_1,f_2)\,N_2N_1\Bigr)=N_1N_2 \, .
$$
\end{itemize}
Combining these terms gives
$$
\Phi_{xy}(F_1,F_2)=f_1f_2+f_2N_1+f_1N_2+N_1N_2=(f_1+N_1)(f_2+N_2)=F_1F_2 \, ,
$$
which coincides with the pre-existing product on $\mathfrak R\ev$. This completes the proof of item $(ii)$.

Bringing together items $(i)$ and $(ii)$, we conclude that the collection $\Phi=\{\Phi_h\}$ defines a $\C$-ring structure on $\mathfrak R\ev$ compatible with its underlying ring structure. By Definition \ref{def:superringnew}, this completes the proof that $\mathfrak R=\mathfrak C[\theta^1,\ldots,\theta^q]^{\pm}$ is a $\C$-superring.
\end{proof}

\begin{remark}
If one considers the space of smooth functions on the Euclidean superspace of dimension $p\vert{}q$ as an application of Proposition \ref{prop:superstruct}, defined by
$$
\C(\R^{p|q})=\C({\R}^p)[\theta^1,\ldots, \theta^q]^{\pm} \;
	 :=\;   \frac{C^{\infty}({\R}^p)\langle Z^1,\ldots, Z^q \rangle}{\left( fZ^{i}-Z^{i}f\,,\;Z^{i}Z^{j}+Z^{j}Z^{i}
\mid f \in\C({\R}^p);\, i, j \in\{1,\ldots, q\}\right)}, 
$$
where $\theta^i$ represents the equivalence class of the indeterminate $Z^i$, there appears to be a striking discrepancy in the formulation of the $k$-ary operations. Indeed, formula \eqref{eq:taylorPhi1} includes all higher-order terms in the Taylor expansion of the smooth function $h$, whereas Example \ref{ex:first} retains only the constant and linear terms. This apparent contradiction is easily resolved: for $q \geq 4$, defining the operations on $\C(\R^{p|q})$ via the truncation in Example \ref{ex:first} fails to yield a $\C$-superring under Definition \ref{def:superringnew} because structural compatibility no longer holds.

To explain this claim in detail, recall that elements of this quotient decompose uniquely as
$$
F=f+\sum_{|I|=2n}f^I\theta^I+\sum_{|I|=2n+1}g^I\theta^I
$$
where $I\subseteq \{1,2,\ldots,q\}$ is a multi-index, $n\in \Z$, $f,f^I,g^I\in \C(\R^p)$ and $\theta^I=\theta^{i_1}\theta^{i_2}\ldots\theta^{i_{|I|}}$. Thus, the even part $\C({\R}^p)[\theta^1,\ldots, \theta^q]^{\pm}\ev$ consists of elements $F$ with $g^I = 0$ for all odd $|I|$. If we attempt to define the $\C$-ring structure on $\C({\R}^p)[\theta^1,\ldots, \theta^q]^{\pm}\ev$ using the truncated formula
\begin{equation}\label{eq:old-formula}
\Phi_h(F_1,\ldots,F_k)= \phi_h (f_1, \ldots, f_k)\,
	         +\, \sum_{i=1}^k   (\phi_{\partial_i h}) (f_1, \ldots, f_k)\,N_i,
	         \qquad F_j=f_j+N_j,
\end{equation}
 for any smooth function $h:{\R}^k\rightarrow {\R}$ and $k\in {\Bbb Z}_{\ge 1}$, this construction fails to define a $\C$-ring structure on $\C(\R^p)[\theta^1,\ldots,\theta^q]^{\pm}\ev$ as soon as $q\geq 4$. In fact, \eqref{eq:old-formula} is incompatible with the pre-existing multiplication on $\C(\R^p)[\theta^1,\ldots,\theta^q]^{\pm}\ev$, a compatibility that is required of any $\C$-ring structure extending the underlying ring structure (as seen by testing \eqref{eq:old-formula} on $h(x,y)=xy$).

To demonstrate this failure explicitly, take $p=1$ and $q=4$, so that $\C(\R)[\theta^1,\theta^2,\theta^3,\theta^4]$. Consider the two even elements
$$
F_1=f_1+\theta^1\theta^2, \qquad F_2=f_2+\theta^3\theta^4 \, ,
$$
for arbitrary $f_1,f_2\in\C(\R)$. Both elements have the form $F_j = f_j + N_j$, where $N_j$ is an even nilpotent element.

Since $\mathfrak{C}[\theta^1,\ldots,\theta^q]^{\pm}\ev$ already carries the multiplication inherited from the exterior algebra $\bigwedge_{\mathfrak C}\xi$, compatibility requires that
$$
\Phi_{xy}(F_1,F_2)=F_1 F_2=f_1f_2+f_1\,\theta^3\theta^4+f_2\,\theta^1\theta^2+\theta^1\theta^2\theta^3\theta^4 \, ,
$$
where the final term is non-zero because $\theta^1, \theta^2, \theta^3, \theta^4$ are pairwise distinct odd generators, yielding $N_1 N_2 = (\theta^1\theta^2)(\theta^3\theta^4) = \theta^1\theta^2\theta^3\theta^4 \neq 0$.

On the other hand, applying the linear formula \eqref{eq:old-formula} with $h(x,y) = xy$ (where $\partial_x h = y$ and $\partial_y h = x$) yields
$$
\Phi_{xy}(F_1,F_2)= f_1f_2+ f_2\cdot N_1 + f_1\cdot N_2 = f_1f_2+f_2\,\theta^1\theta^2+f_1\,\theta^3\theta^4 \, .
$$
Comparing the two results shows that formula \eqref{eq:old-formula} entirely omits the product $N_1 N_2 = \theta^1\theta^2\theta^3\theta^4$:
$$
\Phi_{xy}(F_1,F_2)\,=\; F_1F_2 \;-\; \theta^1\theta^2\theta^3\theta^4 \;\neq\; F_1F_2 \, .
$$
Hence, \eqref{eq:old-formula} fails to reproduce the ring multiplication on $\C(\R)[\theta^1,\theta^2,\theta^3,\theta^4]$, and cannot yield a compatible $\C$-ring structure.

This discrepancy remains hidden when $q = 2$ because the even nilpotent ideal is square-zero: the only non-zero even nilpotent monomial is $\theta^1\theta^2$, and $(\theta^1\theta^2)^2 = 0$. Consequently, $N_1 N_2 = 0$ holds automatically whenever $N_1, N_2$ are multiples of $\theta^1\theta^2$, making the linear formula \eqref{eq:old-formula} to coincide with the full Taylor expansion. However, for $q \geq 4$, distinct even nilpotent monomials such as $\theta^1\theta^2$ and $\theta^3\theta^4$ have a non-zero product. Thus, the quadratic (and higher-order) terms in the Taylor expansion no longer vanish identically. Formula \eqref{eq:taylorPhi1} defines the operation used throughout the remainder of this paper whenever the $\C$-ring operations $\Phi_h$ on a split $\C$-superring, in the sense of Definition \ref{def:Csplit} below, are invoked.
\end{remark}

\begin{definition}\label{def:Csplit}
Let $\mathfrak C$ be a $\C$-ring and $\xi$ a finitely generated $\mathfrak C$-module with basis $\{ \theta^1, \cdots, \theta^q \}$. A $\C$-superring $\mathfrak R$ is said to be \emph{split} if it is isomorphic to $\mathfrak{C}[\theta^1, \ldots, \theta^q]^{\pm}=\bigwedge_{\mathfrak C} \xi$.
\end{definition}

Observe that if $\mathfrak{R}$ is a $\C$-superring split, then it can be written as $\mathfrak{R}=\overline{\mathfrak{R}}\oplus \mathfrak{J}_{\mathfrak{R}}$. This is equivalent to claiming that the short exact sequence 
\begin{equation}\label{eq:split1}  
0\longrightarrow \mathfrak{J}_{\mathfrak{R}} \stackrel{\iota} \longrightarrow \mathfrak{R}\stackrel{\pi}{\longrightarrow} \overline{\mathfrak{R}}\longrightarrow 0
\end{equation} 
splits.

\begin{example}\label{nonsplit}
Let $\mathfrak{I}$ be the ideal of $\C(\R^{1|2})$ generated by $x^2+ \theta^1\theta^2$. Consider the $\C$-superring $\mathfrak{R}:=\C(\R^{1|2})/\mathfrak{I}$. We will show that $\mathfrak{R}$ is not split.

Recall that elements of $\mathfrak{R}=\C(\R^{1|2})$ are of the form 
$f_0(x)+f_{12}(x)\; \theta^1\theta^2 +f_1(x)\;\theta^1+f_2(x)\;\theta^2$, where $f_0, f_1, f_2, f_{12} \in\C({\R})$. Note that the ideal generated by the odd parts, $\mathfrak{R}\od^2$, can be identified with $(\overline{x}^2)$ because in the quotient $\theta^1 \theta^2$ is equivalent to $-x^2$. With this identification, $x^2$ becomes nilpotent in the quotient $\mathfrak{R}$.  In particular, $\mathfrak{R}\ev$ is isomorphic to $\C(\R)/(x^4)$. Likewise, the superreduced ring $\overline{\mathfrak R}\simeq\mathfrak{R}\ev/\mathfrak{R}\od^2$ is isomorphic to $\C(\R)/(x^2)$.  Consequently, we have a surjective morphism:
$$\C(\R)/(x^4)\rightarrow\frac{\C(\R)/(x^4)}{(x^2)/(x^4)})=\frac{\C(\R)}{(x^2)} \, .$$
This morphism corresponds to the canonical projection $\mathfrak{R}\ev\rightarrow \overline{\mathfrak{R}}$, which induces the natural graded-preserving map $\mathfrak{R}\rightarrow \overline{\mathfrak{R}}$. However, there is no injective morphism:
$$\C(\R)/(x^2)\rightarrow \C(\R)/(x^4) \, .$$
Thus, a right inverse to the canonical projection $\mathfrak R \rightarrow \overline{\mathfrak R}$ cannot be defined. Consequently, the canonical short exact sequence of $\mathfrak R$ given in \eqref{eq:split1} does not split in this case, meaning $\mathfrak R$ is not a split $\C$-superring.
\end{example}

\section{$\C$-SuperGeometry}\label{sec:three}
This section  is dedicated to a deeper examination of the structure of $\C$-superspaces. We begin by introducing fundamental definitions. Specifically, we use the notion of $\C$-superring to define affine $\C$-superschemes. We then show that $\C$-superspaces satisfying certain gluing conditions can be patched together to form a new $\C$-superspace. Subsequently, we introduce the concepts of \textit{projected} and \textit{split} $\C$-superspace, which are direct generalizations of the analogous concepts defined for supermanifolds, as discussed in \cite{Noja}.

\begin{definition}
A \emph{$\C$-superspace} is a locally $\C$-superringed-space, that is, a pair $\mathscr X=(X,\sh_{\mathscr X})$ consisting of a topological space $X$ together with a sheaf $\sh_{\mathscr X}$ of $\C$-superrings on $X$. For any $x\in X$, the stalk $\sh_{\mathscr{X}, x}$ is a local $\C$-superring given by
$$
\sh_{\mathscr{X},x}:=\varinjlim_{x\in U}\sh_{\mathscr X}(U),
$$
where the direct limit is taken over all open neighborhoods $U$ of $x$.
\end{definition}

For any $\C$-superring we define a \textit{super-spectrum functor} $\text{sSpec}(\cdot)$ which assigns to any $\C$-superring a $\C$-superspace. Conversely, we define a \textit{super global sections functor} $s\Gamma(\cdot)$ that assigns to any $\C$-superspace a $\C$-superring. More precisely,

\begin{itemize}[leftmargin=*]
\item {\bf The Superspectrum Functor:}  $\text{sSpec}:  {\bf S\C Rings}^{op} \rightarrow \bf{LS\C Rings}$ is defined on objects by $\text{sSpec}(\mathfrak{R})=(X_\mathfrak{R},\mathcal{O}_{X_\mathfrak{R}})$, for any $\C$-superring $\mathfrak R$, and on morphisms by $\text{sSpec}(\varphi)=(f_{\varphi},f^{\#}_{\varphi})$, for any $\C$-superring morphism $\varphi: \mathfrak R \rightarrow \mathfrak S$. \\
Here $X_{\mathfrak{R}}$ is the topological space of all $\R$ points for $\mathfrak{R}\ev$, that is, the set of all morphisms of $\C$-rings $x:\mathfrak{R}\ev\to\R$ and the structure sheaf is defined for any open subset $U\subseteq \mathfrak{R}$ as $$\sh_X(U)=\sh_{X_{\mathfrak{R}\ev}}(U)\oplus \text{Mspec}\, \mathfrak{R}\od(U)$$

\item {\bf The Global Sections Functor:} $s\Gamma:{\bf LS\C Rings}\rightarrow {\bf S\C Rings}^{op}$ is defined on objects by the global sections $s\Gamma(X,\mathcal{O}_{\mathscr X})=\mathcal{O}_{\mathscr X}(X)$, for any locally $\C$-superspace $(X, \mathcal O_{\mathscr X})$,  and on morphisms by $s\Gamma(f)=f^{\#}(X):\mathcal{O}_{\mathfrak Y}(Y)\rightarrow\mathcal{O}_{\mathscr X}(X)$, for any morphism of locally $\C$-superspaces $(f, f^{\#}) : (X, \mathcal O_{\mathscr X}) \rightarrow (Y, \mathcal O_{\mathfrak Y})$.
\end{itemize}

By a \textit{fair} $\C$-superring $\mathfrak{R}$ we mean a finitely generated $\C$-superring such that, for all $a \in \mathfrak R$ and $x \in X_{\mathfrak R}$, $\mathcal{L}_x(a)=0$ if and only if $a=0$, with $\mathcal{L}_x:\mathfrak{R}\rightarrow(\mathfrak{R}\ev)_{x}\oplus(\mathfrak{R}\od)_{x}$ the localization morphism. \\
For example $\C$ superrings associated to supermanifolds are  fair $\C$-superrings. Moreover, the category of supermanifolds forms a full (proper) subcategory of the category of fair affine $\C$-superschemes.\\
An important consequence in \cite{ORTG} is that the functor $\text{sSpec}$ is right adjoint to the functor $s\Gamma$. Furthermore, the natural transformation  $\mathcal{F}a \Rightarrow s\Gamma\circ \text{sSpec}$ is a natural isomorphism of functors, where $\mathcal{F}a$ is the \textit{fairfication functor}. More details in \cite[Theorem 4.34]{ORTG} and \cite[Proposition 4.36]{ORTG}. 
\begin{definition}
 An affine  $\C$-superscheme is a $\C$-superspace that is isomorphic to $\text{sSpec}(\mathfrak R)$ for some $\C$-superring $\mathfrak R$.
\end{definition}

\subsection{$C^\infty$-superspaces from $C^\infty$-spaces by ``adjoining" odd sections}

Starting from the $\C$-ring of global sections of a smooth manifold, we can construct the structure sheaf of a supermanifold as the tensor product of this $\C$-ring with the exterior algebra of a module over this ring. This process can be generalized to a broader class of algebraic objects known as Fermat theories. For example, in \cite{Carchedi}, the authors define a superization functor that associates a super Fermat theory with any given Fermat theory. The following is an example of this type of construction.

\begin{example}
  Let $\mathfrak{C}$ be the $\C$-ring of global sections of a smooth manifold $M$. We define the $\C$-superring $\mathfrak R$ as  $\mathfrak{C}[\theta^1,\ldots,\theta^q]^\pm$, where $\theta^1,\ldots,\theta^q$ are odd variables. The super spectrum of $\mathfrak R$ is the $\C$-superspace $(X_\mathfrak{C}, \mathcal{O}_{\mathscr X})$, where $X_\mathfrak{C}$ is the space of $\R$-points of $\mathfrak{C}$ and $\mathcal{O}_{\mathscr X}$ is the structure sheaf of $\C$-superrings whose ring of global sections is $\mathfrak R$. This example can be generalized by replacing the $\C$-ring of global sections of a smooth manifold with the $\C$-ring of global sections of an affine $\C$-scheme that is not a smooth manifold.
\end{example}

The procedure described is a well-known construction in supermanifold theory. Given a manifold $(M,\C_M)$ and a vector bundle $E$ over it, we can construct a supermanifold $\mathscr M=(M, \C(M)\otimes \bigwedge E^*)$. This supermanifold is completely characterized by the pair $(M,E)$. The basis vectors of the dual bundle $E^*$ can be interpreted as anticommuting coordinates. This ``bottom-up" approach for constructing specific supermanifolds can be generalized to $\C$-superspaces. We begin with an affine $\C$-scheme $(X,\sh_X)$ and a vector bundle $\mathscr E$ of rank $q$ over $X$. This vector bundle is an $\sh_X$-module such that for any open subset $U \subseteq X$, the restriction $\mathscr E|_U$ is isomorphic to $\mathscr E|_U\cong \sh_X|_U\otimes_{\R} \R^n$. From this, we can define a $\C$-superspace $\mathscr X=(X,\sh_X\otimes\bigwedge \mathscr E^*)$.

\begin{remark}
A specific example of this construction is detailed in \cite[Example 1.4]{Yau1}. In this work, the authors begin with a smooth manifold $(M,\sh_M)$  that has a Riemannian structure, along with a spinor bundle $S$ over it. By tensoring with the exterior bundle of $S^*$, they define a $\C$-superscheme $(M,\bigwedge_{\sh_M} S^*)$.
\end{remark}
\begin{example}\label{supchemes}
We can build a $\C$-superspace out of a $\C$-ringed space by superizing its structure sheaf. Indeed, if $(X,\sh_X)$ is a $\C$-ringed space, then for any $q\geq 0$ we define a sheaf of $\C$-superrings $\sh_{\mathscr X}$ by 
$$\sh_{\mathscr X}(U)=\sh_X(U)[\theta^1\ldots,\theta^q]^{\pm}.$$
With the obvious restrictions. It is easy to see that $\mathscr X =(X, \sh_{\mathscr X})$ is a $\C$-superspace. We call this space \emph{the superization of $(X,\sh_X)$ to $q$ odd variables}. 

An important result related to this construction is the localization at an $\R$-point. More precisely, let $\mathfrak{C}$ be a $\C$-ring and consider the $\C$-superring $\mathfrak{R}=\mathfrak{C}[\theta^1,\ldots,\theta^n]^\pm$. Let $x$ be an $\R$-point of $\mathfrak R$. The reduced part of $\mathfrak R$ corresponds to $\mathfrak{C}$, and its localization at $x$ is given by $\mathfrak{R}_x=\mathfrak{C}_x[\theta^1,\ldots,\theta^n]^\pm$. More details in \cite[Example 4.16]{ORTG}
\end{example}

We aim to treat both singular and regular spaces on an equal footing. For this purpose, the notion of a \emph{differentiable space} provides the appropriate framework. Differentiable spaces offer a natural enlargement of the category of smooth manifolds, designed to accommodate singular spaces while retaining a sheaf-theoretic notion of smooth functions. Formally, they are locally ringed spaces modeled on the zero loci of smooth functions in Euclidean space. In this way, they generalize the classical extrinsic construction of manifolds. It's important to note that the definition of a differentiable space is not unique in the literature. Various frameworks, including those based on differentiable algebras, sheaf-theoretic methods, and categorical approaches, offer distinct yet related definitions, all of which seek to extend smooth manifolds to more general spaces while preserving a differentiable structure. For further details, see \cite{Ler1}, \cite{GS}, and the references therein.\\

With the notion of differentiable space at hand, the following example provides a straightforward method for constructing $\C$-superspaces that are not supermanifolds.

\begin{example}\label{difsuperspace}
 Let $(X,\mathcal{F})$ be a differentiable space as defined in \cite{Ler1}. $X$ is not necessarily an affine $\C$-scheme coming from a smooth manifold, however $\mathcal F$ is a sheaf of $\C$-rings. We can therefore ``superize" $\mathcal F$ by adjoining a fixed number of odd variables to make it a sheaf of $\C$-superrings. Specifically, for a fixed $q\in \mathbb{N}$ and for any open subset $U$ of $X$, we define:

$$\sh_{\mathscr X} (U)=\mathcal{F}(U)[\theta^1,\ldots,\theta^q]^{\pm}$$

The restrictions are defined naturally by superizing those of $\mathcal F$. Thus, $\sh_{\mathscr X} $ is a sheaf of $\C$-superrings, and ${\mathscr X}=(X,\sh_{\mathscr X})$ is a $\C$-superspace, but is not necessarily  a supermanifold. 
\end{example}

Now that we have established a concrete procedure for constructing relatively simple $\C$-superspaces from $\C$-ringed spaces by adjoining odd variables, we want to be able to patch these superspaces together to obtain more intricate spaces. For this process to be successful, we need to specify certain gluing conditions.

\begin{definition}[Gluing data]
 Let $\{(X_i,\sh _i)\}_{i \in \mathcal I}$ be a collection of $\C$-superspaces and $\varphi_{ij}:(X_{ij}, \sh_{ij})\to (X_{ji},\sh_{ji})$ be a set of isomorphisms, where $X_{ij}=X_i\cap X_j$ and $\sh_{ij}=\sh_{i}|_{X_{ij}}$. We say that $\{(X_i,\sh _i)\}$ and the isomorphisms $\varphi_{ij}$ form a set of gluing data if the following cocycle conditions are satisfied:
 \begin{itemize}
     \item $\varphi_{ii}=Id$, 
     \item $\varphi_{ij}=\varphi_{ji}^{-1}$,
     \item $\varphi_{ij}\circ \varphi_{jk}=\varphi_{ik}$.
 \end{itemize}

\end{definition}

\begin{definition}
Let $\mathscr X=(X,\sh_{\mathscr X})$ be a $\C$-superspace. An \emph{open subspace} of $\mathscr X$ is a $\C$-superspace $\mathscr X|_U:=(U,\sh_{\mathscr X}|_U)$, where $U$ is an open subset of $X$ and $\sh_{\mathscr X}|_U$ is the restriction sheaf of $\sh_{\mathscr X}$ to $U$. This structure is equipped with a morphism $(j,j^\sharp):\mathscr X|_U\to \mathscr X$, where $j: U \to X$ is the inclusion map and the sheaf morphism $j^\sharp:j^{-1}\sh_{\mathscr X}\to\sh_{\mathscr X}|_U$ is the identity. The morphism $(j,j^\sharp)$ is referred to as the \emph{inclusion} of $ \mathscr X|_U$ in $\mathscr X$. We will systematically identify open subspaces with the underlying open sets. This allows for the meaningful definition of unions and finite intersections of open subspaces.
\end{definition}

The proofs of the following proposition and lemma are a direct extension of the non-graded case, which were established in \cite{Olarte}.

\begin{proposition}
Let $\{(X_i,\sh _i)\}_{i \in I}$ and $\varphi_{ij}$  be a set of gluing data. Then there exists a $\C$-superspace $(X,\sh_{\mathscr X})$, together with open immersions $\psi_{i}:(X_i,\sh_{X_i})\to (X,\sh_{\mathscr X})$ for each $i \in \mathcal I$. 
\end{proposition}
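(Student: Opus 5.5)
The plan is to follow the classical construction of a glued ringed space, as in \cite{Olarte}, and to check at each step that the $\Z_2$-grading and the $\C$-ring structure on even parts survive.

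\textbf{Underlying space.} Take the disjoint union $\bigsqcup_i X_i$ and the relation $x\sim y$ for $x\in X_{ij}$, $y\in X_{ji}$ with $\varphi_{ij}(x)=y$. Here I write $\varphi_{ij}$ also for the underlying homeomorphism. The cocycle conditions $\varphi_{ii}=Id$, $\varphi_{ij}=\varphi_{ji}^{-1}$ and $\varphi_{ij}\circ\varphi_{jk}=\varphi_{ik}$ make $\sim$ reflexive, symmetric and transitive. Set $X=\bigsqcup_i X_i/\!\sim$ with the quotient topology, and let $\psi_i:X_i\to X$ be the composite of inclusion and projection. Each $\psi_i$ is injective, since points of a single $X_i$ are identified only via $\varphi_{ii}=Id$. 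It is also open: for $V\subseteq X_i$ open, the preimage of $\psi_i(V)$ in $X_j$ is $\varphi_{ji}(V\cap X_{ij})$, which is open in $X_{ji}$ and hence in $X_j$. So $\psi_i$ is a homeomorphism onto an open subset $U_i$, and $U_i\cap U_j=\psi_i(X_{ij})$.

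\textbf{Structure sheaf.} Transport the sheaves, putting $\mathcal F_i:=(\psi_i)_*\sh_i$ on $U_i$. The sheaf parts $\varphi_{ij}^\sharp$ of the isomorphisms give isomorphisms of sheaves of $\C$-superrings $\mathcal F_i|_{U_i\cap U_j}\cong\mathcal F_j|_{U_i\cap U_j}$ satisfying the cocycle condition. For $W\subseteq X$ open, define
\[
\sh_{\mathscr X}(W)=\Bigl\{(s_i)_i\in\prod_i \mathcal F_i(W\cap U_i)\;:\; s_i|_{W\cap U_{ij}}\ \text{corresponds to}\ s_j|_{W\cap U_{ij}}\ \text{under the gluing isomorphism}\Bigr\}.
\]
This is a sub-superring of the product. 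The gluing maps are grade-preserving, so the even and odd parts are computed componentwise. The even part is the equalizer, in the category of $\C$-rings, of two families of $\C$-ring morphisms, because the gluing maps restricted to even parts are $\C$-ring morphisms by Definition \ref{def:smorph}. Equalizers of $\C$-rings are computed on underlying sets, with the operations $\phi_f$ acting componentwise. Hence $\sh_{\mathscr X}(W)$ is a $\C$-superring and the restriction maps are morphisms of $\C$-superrings. The sheaf axioms follow from those of the $\mathcal F_i$, since limits commute with limits.

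\textbf{Open immersions.} Projection onto the $i$-th factor gives $\sh_{\mathscr X}|_{U_i}\to\mathcal F_i$. I will show it is an isomorphism: given $s\in\mathcal F_i(W)$ with $W\subseteq U_i$, the tuple $s_j:=$ (the transport of $s|_{W\cap U_j}$) is compatible because of the cocycle condition, and it is the unique preimage. Thus $\psi_i$, together with the inverse of this isomorphism, is an open immersion $(X_i,\sh_i)\to(X,\sh_{\mathscr X})$.

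\textbf{Locality and the main obstacle.} Stalks of $\sh_{\mathscr X}$ at points of $U_i$ agree with stalks of $\sh_i$, so they are local $\C$-superrings and $(X,\sh_{\mathscr X})$ is a $\C$-superspace. The only genuinely non-classical point, and the one I expect to need care, is that the $\C$-operations are well defined on the glued sheaf. This reduces to the fact that limits in $\mathbf{C^\infty Rings}$ are created by the forgetful functor to sets, which I would verify directly from the definition of the operations $\phi_f$.
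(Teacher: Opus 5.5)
Your proposal is correct and is essentially the argument the paper intends: the paper gives no separate proof, saying only that it is a direct extension of the non-graded gluing construction of \cite{Olarte}. Your construction is exactly that one, together with the correct observation that on even parts the glued sections form an equalizer of $\C$-ring morphisms, which is computed on underlying sets. One index slip needs fixing: the preimage of $\psi_i(V)$ in $X_j$ is $\varphi_{ij}(V\cap X_{ij})$, not $\varphi_{ji}(V\cap X_{ij})$. Also, transitivity of $\sim$ tacitly uses the standard triple-overlap condition $\varphi_{ij}(X_{ij}\cap X_{ik})=X_{ji}\cap X_{jk}$ implicit in the cocycle identity.
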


\begin{lemma}[Gluing of morphisms]\label{gluing}
Let $\{U_\alpha\}_{\alpha \in I}$ be an open cover of a $\C$-superspace $\mathscr X=(X,\sh_{\mathscr X})$, and let $\mathscr Y=(Y,\sh_{\mathscr Y})$ be a $\C$-superspace. Suppose that for each $\alpha \in I$ there is a morphism
$$
(f_\alpha, f_\alpha^\sharp): (U_\alpha, \sh_{\mathscr X}|_{U_\alpha}) \longrightarrow \mathscr Y
$$
such that, for all $\alpha, \beta \in I$,
$$
f_\alpha|_{U_{\alpha\beta}} = f_\beta|_{U_{\alpha\beta}},
$$
where $U_{\alpha\beta} := U_\alpha \cap U_\beta$. Then there exists a unique morphism $(f,f^\sharp): \mathscr X \to \mathscr Y$ such that $f|_{U_\alpha}=f_\alpha$ for every $\alpha \in I$.
\end{lemma}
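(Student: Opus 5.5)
The plan is to follow the standard sheaf-theoretic construction of a glued morphism, checking at each stage that the $\C$-superring structure is preserved. Since a morphism of $\C$-superspaces is a pair (continuous map, morphism of sheaves of $\C$-superrings $f^\sharp:\sh_{\mathscr Y}\to f_*\sh_{\mathscr X}$) that is local on stalks, I will build the two components separately.

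\textbf{The continuous map.} Define $f:X\to Y$ by $f(x)=f_\alpha(x)$ for any $\alpha$ with $x\in U_\alpha$. This is well defined because $f_\alpha|_{U_{\alpha\beta}}=f_\beta|_{U_{\alpha\beta}}$. It is continuous because its restriction to each member of the open cover $\{U_\alpha\}$ is continuous.

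\textbf{The sheaf map.} Here the hypothesis must be read as agreement of the full morphisms of $\C$-superspaces on the overlaps, not just of the underlying maps. That is, $f_\alpha^\sharp$ and $f_\beta^\sharp$ induce the same map after restriction to $U_{\alpha\beta}$; otherwise uniqueness of $f^\sharp$ would fail. For $V\subseteq Y$ open and $s\in\sh_{\mathscr Y}(V)$, consider the sections
\[
t_\alpha:=f_\alpha^\sharp(V)(s)\in\sh_{\mathscr X}\bigl(f_\alpha^{-1}(V)\bigr)=\sh_{\mathscr X}\bigl(f^{-1}(V)\cap U_\alpha\bigr).
\]
By the compatibility on overlaps, $t_\alpha$ and $t_\beta$ agree on $f^{-1}(V)\cap U_{\alpha\beta}$. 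The sheaf axiom for $\sh_{\mathscr X}$ then gives a unique $t\in\sh_{\mathscr X}(f^{-1}(V))$ restricting to each $t_\alpha$. Set $f^\sharp(V)(s):=t$.

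\textbf{Structure is preserved.} The map $f^\sharp(V)$ is parity preserving, additive and multiplicative, because each $f^\sharp_\alpha$ is and restriction maps are morphisms of $\C$-superrings. It also commutes with every $n$-ary operation $\Phi_h$ on even parts. This follows from the sheaf axiom: both $f^\sharp(V)(\Phi_h(s_1,\dots,s_n))$ and $\Phi_h(f^\sharp(V)s_1,\dots,f^\sharp(V)s_n)$ restrict on each $U_\alpha$ to the same element, since the $\C$-operations on $\sh_{\mathscr X}$ commute with restriction. The same restriction argument shows that $f^\sharp$ is compatible with restrictions in $V$.

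\textbf{Locality and uniqueness.} The map $f^\sharp$ is local on stalks because stalks are local: for $x\in U_\alpha$, the stalk map $f^\sharp_x$ coincides with $(f_\alpha^\sharp)_x$, which is local by hypothesis. Uniqueness follows because any $(g,g^\sharp)$ with $g|_{U_\alpha}=f_\alpha$ must have $g=f$ pointwise. Moreover $g^\sharp(V)(s)$ is forced to restrict to $t_\alpha$ on each $U_\alpha$, so it equals $t$ by the identity part of the sheaf axiom.

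The main obstacle I expect is purely one of bookkeeping: interpreting the overlap hypothesis correctly so that it includes the sheaf components, and verifying that the glued $f^\sharp$ respects the $\C$-operations. The second point reduces, as above, to the fact that restriction maps of $\sh_{\mathscr X}$ are $\C$-superring morphisms. This is exactly the argument of the ungraded case in \cite{Olarte}, applied separately to even and odd parts.
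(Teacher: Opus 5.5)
Your proposal is correct and takes essentially the same approach as the paper: the paper gives no separate proof, saying only that the lemma is a direct extension of the ungraded gluing argument of \cite{Olarte}, and that standard construction is what you carry out (glue the continuous maps, glue the sections $f_\alpha^\sharp(V)(s)$ via the sheaf axiom, and check parity, the $\C$-operations, locality and uniqueness by restricting to the cover). Your remark that the overlap hypothesis must be read as agreement of the full morphisms $(f_\alpha,f_\alpha^\sharp)$, not just of the underlying continuous maps, is a correct and necessary clarification of the statement as written.
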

With the structures defined above, we can now provide a formal definition of a $\C$-superscheme.

\begin{definition}
A $\C$-superscheme $\mathscr X=(X, \sh_{\mathscr X})$ is a $\C$ superspace that is locally affine. This means that for any $x\in X$, there exists an open neighborhood $U $ of $x$ such that $\mathscr X|_U=(U, \sh_{\mathscr X}|_U)$ is isomorphic to $\text{sSpec}(\mathfrak{R})$ for some $\C$-superring $\mathfrak{R}$.
\end{definition}

\subsection{Derivations}
In the following, we present some basic definitions concerning the theory of derivations for $\C$-superrings. These concepts are consistent with the established theory of derivations for superrings. A natural refinement of this notion is to require compatibility with the underlying $C^\infty$-structure, imposing that an even superderivation of a $\C$-superring induce a genuine $C^\infty$-derivation on its reduced ring.

\begin{definition} \label{def:superderivation}
An $\R$-\emph{derivation} of a $\C$-superring $\mathfrak{R}$ with values in $\mathfrak{R}$ is a homogeneous ${\R}$-linear map $D:\mathfrak{R}\rightarrow\mathfrak{R}$ that satisfies the $\Z_2$-graded Leibniz rule:
  $$
    D(rs)= D( r)s\,+\, (-1)^{|D||r|}rD (s)
  $$
for all homogeneous elements $r$ and $s$  in $\mathfrak{R}$. We say that $D$ is an even derivation if $|D|=0$ and an odd derivation if $|D|=1$.
  \end{definition}

 The set $sDer_{\mathfrak{R}}$ of all even and odd derivations on a $\C$-superring $\mathfrak R$ forms a left $\mathfrak{R}$-module. The module action is defined by $(rD)(s):= r D(s)$. Here, the product on the right side of the equality is the product of the ring $\mathfrak{R}$. The parity of the new derivation $rD$ is given by $|rD|:= |r|+|D|$ for any $r\in \mathfrak{R}$ and $D\in sDer_{\mathfrak{R}}$.

\begin{definition}\label{def:Csuperderivation}
Let $\mathfrak{R}=\mathfrak{R}\ev\oplus \mathfrak{R}\od$ be a $\C$-superring, and let $\overline{\mathfrak{R}}$ denote its reduced $C^\infty$-ring. A $C^\infty$-superderivation of $\mathfrak{R}$ is an even $\R$-derivation such that $D$ preserves the ideal defining the reduced structure and induces a $C^\infty$-derivation
$\overline{D}:\overline{\mathfrak{R}}\longrightarrow \overline{\mathfrak{R}}$. Equivalently, for every $f\in C^\infty(\mathbb{R}^n)$ and every $c^1,\ldots,c^n\in \overline{\mathfrak{R}}$, $\overline{D}$ satisfies:
\[
\overline{D}
\bigl(
\Phi_f(c^1,\ldots,c^n)
\bigr)
=
\sum_{i=1}^{n}
\Phi_{\partial_i f}
(c^1,\ldots,c^n)
\overline{D}(c^i).
\]
\end{definition}
 
\begin{remark}
A few explanatory notes will help contextualize these two previous definitions. Firstly, the restriction to even superderivations is motivated by the behavior of the reduced structure upon passing to the quotient. Indeed, an odd superderivation maps the even part of a superring into its odd part, and thus does not, in general, induce a derivation on the reduced ring (which is a purely even, ordinary ring). Consequently, the notion of a $\C$-superderivation is formulated here exclusively for even superderivations. Furthermore, we require compatibility with the $\C$-structure only on the reduced ring, rather than on the entire even part $\mathfrak{R}\ev$. This is a technical simplification: the even nilpotent elements of $\mathfrak{R}\ev$, together with the odd elements of $\mathfrak{R}$, are viewed as belonging to the auxiliary superalgebraic structure and are therefore governed only by the graded Leibniz rule. This approach avoids imposing an independent $\C$-ring structure on the full even part while preserving the smooth structure on its reduction.
\end{remark}

\begin{example} 
Let $\mathfrak{R}=\mathfrak{C}[\theta^1,\ldots,\theta^q]^\pm$ be a split $\C$-superring with $\{\phi_f\}$ the collection of $n$-ary operations and $\mathfrak{C}$ a finitely generated reduced $\C$-ring. If the generators of $\mathfrak{C}$ are $c^1,\ldots, c^p$, then for each $c^i$ we have an even derivation, $\frac{\partial}{\partial c^i}:\mathfrak{C}\longrightarrow\mathfrak{C}$, defined by 
$\frac{\partial}{\partial c^i}(c^j)=\delta_i^j$, where $\delta_i^j$ is the Kronecker delta.
Therefore, as a consequence of the chain rule, we have 
$$
\frac{\partial}{\partial c^i}(\phi_f(c^1,\ldots,c^n))=\phi_{\partial_if}(c^1,\ldots,c^n)
$$
for any $f\in \C(\R^n)$.\\
This derivation can be extended linearly to $\mathfrak R$ by defining $\frac{\partial}{\partial c^i}(c\theta^j)=\frac{\partial}{\partial c^i}(c)\theta^{j}$ for any $c\in\mathfrak{C}$ and any odd variable $\theta^j$.

We can also define the odd derivations, $\frac{\partial}{\partial \theta^i}:\mathfrak{R}\longrightarrow\mathfrak{R}$, associated with the odd variables $\theta^i$. These derivations are formally defined for all $i,j$ by the relations:
 $$
 \frac{\partial}{\partial \theta^i}(\theta^{j})=\delta_i^j\quad\text{and}\quad\frac{\partial}{\partial \theta^i}(c^{j})=0
 $$ 

It is not hard to show that the set $\{\frac{\partial}{\partial c^i}, \frac{\partial}{\partial \theta^i}\}$ forms a basis for the $\mathfrak R$-module $sDer_\mathfrak{R}$.
 
\end{example}

Now that we've explained the notion of superderivations, we can globalize this concept and define the tangent sheaf of a $\C$-superspace.

\begin{definition} 
Let $\mathscr X=(X,\sh_{\mathscr X} )$ be a $\C$-superspace. The tangent sheaf of $\mathscr X$ is the $\sh_{\mathscr X}$-module $\mathcal T_{\mathscr X}:=sDer_{\sh_{\mathscr X}}$, which is defined on any open subset $U\subseteq X$ as 

$$\mathcal T_{\mathscr X}(U):=sDer_{\sh_{\mathscr X}(U)} \, .$$
\end{definition}

\begin{remark}
Given an $\sh_{\mathscr X}$-module $\mathcal{G}$, we can define superderivations with values in $\mathcal{G}$. These are maps $\sh_{\mathscr X}\rightarrow \mathcal{G}$ that satisfy the $\Z_2$-graded Leibniz rule. Such $\sh_{\mathscr X}$-derivations can be represented by the sheaf $\mathcal T_{\mathscr X}\otimes \mathcal{G}$.
\end{remark}

\subsection{Structure theorems}

The category of $\C$-superspaces is significantly more extensive than that of smooth or complex supermanifolds. However, a straightforward method exists to construct supermanifold examples from classical non-graded geometry. 

Given a smooth manifold $M$ with its structure sheaf $\mathcal O_M$, and a locally free sheaf $\mathcal E$ of finite rank over $\mathcal O_M$ (which corresponds to a vector bundle $E$ over $M$), we can define a supermanifold. This supermanifold has the topological space $M$ and a structure sheaf given by the exterior algebra $\Lambda \mathcal E$. Such supermanifolds are called \textit{split}. Both smooth and complex supermanifolds are, by definition, locally split. A key question is whether all supermanifolds are globally split.

Batchelor's theorem \cite{Batchelor} addresses this question in the real smooth case, establishing that every smooth supermanifold is globally split. However, this theorem does not hold for complex supermanifolds due to topological obstructions (see, e.g., \cite{CNR}).

In this section, we will introduce the concepts and analyze the necessary conditions and potential obstructions to extend Batchelor's theorem to a class of $\C$-superspaces that properly includes smooth supermanifolds.

\begin{definition}
Let $\mathscr X=(X\mathscr,\mathcal{O}_{\mathscr X})$ be a $\C$-superspace. The \emph{canonical superideal} of $\mathscr X$ is the $\mathcal{O}_{\mathscr X}$-module $\mathcal{J}_{\mathscr X}$ generated by odd section,defined for any open $U\subseteq X$ as: 
$$
\mathcal{J}_{\mathscr X}(U)=\mathcal{O}_{\mathscr X}(U) \; (\mathcal{O}_{\mathscr X}(U))\od =[(\mathcal{O}_{\mathscr X}(U))\od]^2\oplus (\mathcal{O}_{\mathscr X}(U))\od.
$$  
\end{definition}

\begin{remark}
The canonical superideal $\mathcal{J}_{\mathscr X}$ could be used to assign an ordinary $\C$-ringed space to any $\C$-superspace $\mathscr X=(X,\mathcal{O}_{\mathscr X})$. Indeed, the reduced $\C$-ringed space associated to $\mathscr X$ is given by $\mathscr X_{red}=(X,\sh_{\mathscr X_{red}})$, where 
$$
\sh_{\mathscr X_{red}}(U)=\overline{\sh_{\mathscr X}(U)}=\sh_{\mathscr X}(U)/\mathcal{J}_{\mathscr X}(U),
$$
is the superreduced space of the $\C$-superring $\sh_{\mathscr X}(U)$ for any open $U\subseteq X$.
\end{remark}

Every $\C$-superspace $\mathscr X=(X, \sh_{\mathscr X})$ comes equipped with a closed immersion $(j,j^{\sharp}):\mathscr X_{red}\longrightarrow \mathscr X$. Here, $j:X \rightarrow X$ is the identity, and $j^{\sharp}:\sh_{\mathscr X} \longrightarrow \sh_{\mathscr X_{red}}$ is the corresponding quotient projection.

Another property of the canonical ideal is that it induces a \emph{$\mathcal{J}_{\mathscr X}$-adic filtration:} 
\begin{equation} \label{Jfiltration}
\mathcal{O}_{\mathscr X}=\mathcal{J}_{\mathscr X}^0\supset \mathcal{J}_{\mathscr X} \supset \mathcal{J}_{\mathscr X}^2\supset\cdots \supset \mathcal{J}_{\mathscr X}^q \supset \mathcal{J}_{\mathscr X}^{q+1}\cdots .
\end{equation}
This filtration is finite if there exists $q$ such that annihilates $\mathcal{J}_{\mathscr X}$, that is, $\mathcal{J}_{\mathscr X}^{q+1}=0$.

\begin{definition}
Let $\mathscr X=(X,\mathcal{O}_{\mathscr X})$ be a $\C$-superspace, and let $\mathcal{J}_{\mathscr X}$ be the canonical superideal.  With $\text{Gr}^{(k)}\, \mathcal O_{\mathscr X}=\mathcal{J}_{\mathscr X}^k/\mathcal{J}_{\mathscr X}^{k+1} $, the direct sum $\text{Gr}\, \mathcal\,{O}_{\mathscr X}=\bigoplus_{k\geq0}\text{Gr}^{(k)}\, {O}_{\mathscr X}$ is a sheaf of $\mathcal{O}_{\mathscr X}$-modules. The pair  $(X,\text{Gr}\, \mathcal{O}_{\mathscr X}):=\text{Gr}\, \mathscr X$ is called the \emph{graded $\C$-superspace associated to} $\mathscr X$.
\end{definition}

\begin{remark}\label{EquiSymGr}
Let $\mathscr X=(X,\sh_{\mathscr X})$ be a $\C$-superspace and let $\mathcal F_{\mathscr X}=\mathcal J_{\mathscr X}/\mathcal J_{\mathscr X}^2$ be its associated fermionic sheaf. The symmetric powers of $\mathcal{F}_{\mathscr{X}}$, denoted by $\text{Sym}^n \mathcal{F}_{\mathscr{X}}$, are identified with the quotients of the $\mathcal{J}_{\mathscr{X}}$-adic filtration as $\mathcal{O}_{\mathscr{X}_{\text{red}}}$-modules:$$\text{Sym}^n \mathcal{F}_{\mathscr{X}} \cong \mathcal{J}^n_{\mathscr{X}} / \mathcal{J}^{n+1}_{\mathscr{X}}.$$Locally, the $n$-th symmetric power of $\mathcal{F}_{\mathscr{X}}$ is generated by products of $n$ odd sections, which can be expressed in the form $\sum_{|I|=n} c^I \theta^I$, where $I$ is a multi-index of length $n$ and $c^I \in \mathcal{O}_{\mathscr{X}_{\text{red}}}$. It follows that there is an isomorphism of sheaves of graded $\mathcal{O}_{\mathscr{X}_{\text{red}}}$-algebras:$$\text{Sym} \, \mathcal{F}_{\mathscr{X}} \cong \text{Gr} \, \mathcal{O}_{\mathscr{X}}.$$
\end{remark}

\begin{definition}
We say that $\mathscr X$ has \emph{finite odd dimension} if $\mathcal F_{\mathscr X}$ is a locally free $\sh_{\mathscr X_{red}}$-module of finite rank, that is, for every $x\in X$ there exists an open neighborhood $U$ of $x$, $q\in\mathbb N$, independent of $U$, and local generators $\theta^1,\ldots,\theta^q\in \mathcal F_{\mathscr X}(U)$ such that
$$
\mathcal F_{\mathscr X}(U) \cong \bigoplus_{i=1}^q \sh_{\mathscr X_{red}}(U)\cdot\theta^i
$$
as $\sh_{\mathscr X_{red}}(U)$-modules.  We emphasize that this finiteness refers exclusively to the fermionic sheaf $\mathcal F_{\mathscr X}$, viewed as an $\sh_{\mathscr X_{red}}$-module: it records the local rank of the quotient $\mathcal J_{\mathscr X}/\mathcal J_{\mathscr X}^2$, and nothing more. In particular, it has no bearing on any finiteness property of the reduced $\C$-ring $\sh_{\mathscr X_{red}}(U)$ itself; having finite odd dimension neither requires nor implies that $\sh_{\mathscr X_{red}}(U)$ be finitely generated, fair, or otherwise finite as a $\C$-ring.
\end{definition}

\begin{definition}\label{def:severaldef}
A $\C$-superspace $\mathscr X=(X,\sh_{\mathscr X})$ is 
\begin{itemize}
\item \emph{split} if it is isomorphic to its associated graded $\C$-superspace $\text{Gr}\, \mathscr X$.
    \item \emph{locally split } if for every $x\in X$ there exists an open subset $U\subseteq X$ containing $x$ such that $(U,\sh_{\mathscr X}|_U)$ is a split affine $\C$-superspace.
    \item \emph{locally split of rank $q$} if $\mathcal{O}_{\mathscr X}$ is locally of the form $\mathfrak{C}_U[\theta^1,\ldots,\theta^q]^\pm$ for some $\C$-ring $\mathfrak{C}_U$.
    \item \emph{locally finetely generated} if it is locally of finite rank $q$ and $\mathfrak{C}_U$ is finitely generated as $\C$-ring.
\end{itemize}
\end{definition}
\begin{remark}
The notions of locally split of rank $q$ and finite odd dimension introduced above are logically independent, and in fact only one implication holds in general:
$$
\mathscr X \text{ locally split of rank } q \;\Longrightarrow\; \mathscr X \text{ has odd dimension } q \, .
$$
Indeed, if $\sh_{\mathscr X}(U)\cong \mathfrak C_U[\theta^1,\ldots,\theta^q]^{\pm}$ on some open $U$, then $\mathcal F_{\mathscr X}(U)=\mathcal J_{\mathscr X}(U)/\mathcal J_{\mathscr X}(U)^2$ is freely generated by the classes of $\theta^1,\ldots,\theta^q$, hence locally free of rank $q$.\\
The converse fails: having finite odd dimension is a first-order condition, depending only on the quotient $\mathcal J_{\mathscr X}/\mathcal J_{\mathscr X}^2$, whereas local splitness requires an isomorphism of the entire structure sheaf $\sh_{\mathscr X}$ with the exterior algebra model, not merely of its associated graded pieces. By Remark \ref{EquiSymGr}, the identification $\text{Sym}^n \mathcal F_{\mathscr X} \cong \mathcal J_{\mathscr X}^n/\mathcal J_{\mathscr X}^{n+1}$ holds regardless of whether $\mathscr X$ is locally split; finite odd dimension only guarantees that each graded piece of the $\mathcal J_{\mathscr X}$-adic filtration is locally free of finite rank, but says nothing about how these pieces are extended to reconstruct $\sh_{\mathscr X}$ itself.

This is witnessed by the $\C$-superring $\mathfrak{R}=C^\infty(\R^{1|2})/(x^2+\theta^1\theta^2)$ of Example \ref{nonsplit}. The relation $x^2+\theta^1\theta^2$ lies entirely in the even part and does not affect $\theta^1,\theta^2$ as generators of $\mathfrak R\od$, so $\mathfrak J_{\mathfrak R}/\mathfrak J_{\mathfrak R}^2$ remains free of rank $2$ over $\overline{\mathfrak R}=C^\infty(\R)/(x^2)$; thus $\mathscr X=\text{sSpec}(\mathfrak R)$ has odd dimension $2$. Moreover $\mathfrak R$ is fair, since its reduced part $C^\infty(\R)/(x^2)$ is fair. However, $\mathfrak R$ is not split. Hence $\mathscr X$ has finite odd dimension and is fair, yet it is not locally split.
\end{remark}

\begin{remark}\label{rem:whyfair}
At this point we have two, a priori independent, notions of ``split'': an \emph{algebraic} one, for a $\C$-superring $\mathfrak R$ (Definition~\ref{def:Csplit}), asserting that $\mathfrak R$ is isomorphic to $\mathfrak C[\theta^1,\ldots,\theta^q]^{\pm}$ for some $\C$-ring $\mathfrak C$; and a \emph{geometric} one, for a $\C$-superspace $\mathscr X$, asserting that $\mathscr X$ is isomorphic to its associated graded superspace $\mathrm{Gr}\mathscr X$. These two notions are related by the superspectrum and global sections functors, but the relation is not automatic: knowing that $\mathrm{sSpec}(\mathfrak R)$ is split as a $\C$-superspace does not, by itself, tell us that $\mathfrak R$ is split as a $\C$-superring, and conversely.

The obstruction lies entirely in the functorial relationship between $\mathfrak R$ and $s\Gamma(\mathrm{sSpec}(\mathfrak R))$. In general there is only a natural transformation
$$
\mathfrak R \longrightarrow s\Gamma(\mathrm{sSpec}(\mathfrak R)) \, ,
$$
namely the fairification map, and this map need not be an isomorphism: $s\Gamma(\mathrm{sSpec}(\mathfrak R))$ recovers only the \emph{fairification} of $\mathfrak R$, which can differ from $\mathfrak R$ itself when $\mathfrak R$ fails to be germ-determined. Consequently, an isomorphism of $\C$-superspaces $\mathrm{sSpec}(\mathfrak R)\cong (X,\mathrm{Gr}\,\sh_{\mathrm{sSpec}(\mathfrak R)})$, upon taking global sections, yields information about $s\Gamma(\mathrm{sSpec}(\mathfrak R))$ rather than about $\mathfrak R$ directly, and the two may not agree.

This tension is resolved precisely when $\mathfrak R$ is \emph{fair}: by \cite[Theorem~4.34, Proposition~4.36]{ORTG}, the natural transformation $\mathcal Fa\Rightarrow s\Gamma\circ \mathrm{sSpec}$ is an isomorphism on fair $\C$-superrings, so that $s\Gamma(\mathrm{sSpec}(\mathfrak R))\cong \mathfrak R$ whenever $\mathfrak R$ is fair. Under this hypothesis, taking global sections becomes a faithful operation on $\mathfrak R$ itself, and the algebraic and geometric notions of splitness can be legitimately identified: a splitting of the $\C$-superspace $\mathrm{sSpec}(\mathfrak R)$ corresponds, via global sections, to an actual splitting of the $\C$-superring $\mathfrak R$, and not merely of its fairification.

This is the reason we restrict attention to fair (and, in the sheaf-theoretic setting, locally fair) $\C$-superspaces throughout the remainder of this section: fairness is precisely the condition under which the algebraic theory of split $\C$-superrings developed in Section~\ref{sec:two} faithfully controls the geometric theory of split $\C$-superspaces. Without it, the equivalence stated in Theorem~\ref{thm:fair+split} below would fail already at the level of affine $\C$-superschemes, since $\mathfrak R$ could be non-split while its fairification is split, or vice versa.
\end{remark}
\begin{theorem}\label{thm:fair+split}
Let $\mathscr X=\text{sSpec}(\mathfrak{R})$ be a fair affine  $\C$-superscheme. $\mathscr X$ is split if and only if $\mathfrak{R}$ is split. 
\end{theorem}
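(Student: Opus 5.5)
The plan is to pass back and forth between $\mathfrak R$ and $\mathscr X=\text{sSpec}(\mathfrak R)$ using the fairness isomorphism $\mathfrak R\cong s\Gamma(\text{sSpec}(\mathfrak R))$ from \cite[Theorem~4.34, Proposition~4.36]{ORTG}, as discussed in Remark~\ref{rem:whyfair}. The key intermediate claim is that global sections commute with the associated graded construction in the fair affine setting:
$$s\Gamma(X,\text{Gr}\,\sh_{\mathscr X})\cong \text{Gr}\,\mathfrak R=\bigoplus_{k\ge 0}\mathfrak J_{\mathfrak R}^k/\mathfrak J_{\mathfrak R}^{k+1}.$$
Each implication then reduces to comparing global sections.

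For ($\Leftarrow$), suppose $\mathfrak R\cong\mathfrak C[\theta^1,\ldots,\theta^q]^{\pm}$. First I would check that $\mathfrak C\cong\overline{\mathfrak R}$ and that $\mathfrak J_{\mathfrak R}^k$ is spanned by the monomials $\theta^I$ with $|I|\ge k$. This yields a canonical isomorphism of $\C$-superrings $\mathfrak R\cong \text{Gr}\,\mathfrak R$, since both sides are the exterior algebra on the degree-one piece $\xi\cong\mathfrak J_{\mathfrak R}/\mathfrak J_{\mathfrak R}^2$, carrying the operations \eqref{eq:taylorPhi1} of Proposition~\ref{prop:superstruct}. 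Applying $\text{sSpec}$ and using the localization formula of Example~\ref{supchemes}, $\mathfrak R_x=\mathfrak C_x[\theta^1,\ldots,\theta^q]^{\pm}$, I would identify the structure sheaf of $\mathscr X$ with the superization of $\text{Spec}\,\mathfrak C$ to $q$ odd variables. On this sheaf the $\mathcal J_{\mathscr X}$-adic filtration is the degree filtration in the $\theta$'s section by section. The splitting isomorphisms are compatible with restrictions, so they glue to an isomorphism $\sh_{\mathscr X}\cong\text{Gr}\,\sh_{\mathscr X}$.

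For ($\Rightarrow$), suppose $\sh_{\mathscr X}\cong \text{Gr}\,\sh_{\mathscr X}$ as sheaves of $\C$-superrings. Taking global sections and using fairness gives $\mathfrak R\cong s\Gamma(\mathscr X)\cong s\Gamma(X,\text{Gr}\,\sh_{\mathscr X})$. The key intermediate claim then identifies the right-hand side with $\text{Gr}\,\mathfrak R$, which is graded with degree-zero part $\overline{\mathfrak R}$. The composite gives a section $\overline{\mathfrak R}\to\mathfrak R$ of the projection $\pi$ in \eqref{eq:split1}, and also a lift $\xi\to\mathfrak R_{\overline 1}$ of $\mathfrak J_{\mathfrak R}/\mathfrak J_{\mathfrak R}^2$. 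Next I would take $\xi:=\mathfrak J_{\mathfrak R}/\mathfrak J_{\mathfrak R}^2$ with a basis $\theta^1,\ldots,\theta^q$, and use the universal property of the exterior algebra to obtain a superring map $\overline{\mathfrak R}[\theta^1,\ldots,\theta^q]^{\pm}\to\mathfrak R$. This map is an isomorphism because it induces an isomorphism on associated graded objects and the filtration is finite. Its even part respects the $\C$-operations because \eqref{eq:taylorPhi1} is forced by compatibility with the ring structure, as in the Remark following Proposition~\ref{prop:superstruct}. This forcing is the heart of that argument and holds once the reduced map is a $\C$-morphism.

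The main obstacle I expect is the key intermediate claim that global sections commute with $\text{Gr}$, together with the existence of a basis of $\xi$. Fairness only gives $s\Gamma\circ\text{sSpec}\cong\text{id}$ on the superrings themselves, so it does not directly show that $s\Gamma$ preserves the quotients $\mathfrak J^k/\mathfrak J^{k+1}$. To handle this I would check that each $\mathfrak J_{\mathfrak R}^k$ and $\mathfrak R/\mathfrak J_{\mathfrak R}^k$ is again fair, so that the sequences $0\to\mathfrak J^{k+1}\to\mathfrak J^k\to\text{Gr}^{(k)}\to0$ remain exact on global sections. The tools for this are that affine $\C$-schemes admit partitions of unity, so the relevant sheaves are fine, and that localization is exact. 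To obtain a global basis of $\xi$, I would read off freeness from the global isomorphism in ($\Rightarrow$), or else implicitly assume finite odd dimension with a global trivialization.
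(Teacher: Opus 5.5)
Your route is the paper's. For ($\Leftarrow$) you identify $\sh_{\mathscr X}$ with the superization of $\operatorname{Spec}\mathfrak C$, on which the $\mathcal J_{\mathscr X}$-adic filtration is the $\theta$-degree filtration; this half is fine. For ($\Rightarrow$) you take global sections and use $s\Gamma(\text{sSpec}\,\mathfrak R)\cong\mathfrak R$.

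In ($\Rightarrow$) you are more careful than the paper, which simply asserts $\text{Gr}\,\sh_{\mathscr X}(X)\cong\sh_{\mathscr X_{red}}(X)[\theta^1,\ldots,\theta^q]^{\pm}$. The obstacle you single out, $s\Gamma(X,\text{Gr}\,\sh_{\mathscr X})\cong\text{Gr}\,\mathfrak R$, is manageable along the lines you sketch. The structure sheaf of a fair affine is fine, and $\Gamma(X,\mathcal J^k_{\mathscr X})=\mathfrak J^k_{\mathfrak R}$ follows from partitions of unity and finite generation of $\mathfrak R\od$. Two small repairs are needed:
\begin{enumerate}
\item An abstract isomorphism $\psi\colon\mathfrak R\to\text{Gr}\,\mathfrak R$ gives a section of $\pi$ only after correcting by the automorphism $\bar\psi$ it induces on $\overline{\mathfrak R}$, i.e.\ $s=\psi^{-1}\circ\iota_0\circ\bar\psi$. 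The same correction applies to your lift of $\mathfrak J_{\mathfrak R}/\mathfrak J_{\mathfrak R}^2$.
\item The automatic compatibility with the $\C$-operations comes from Hadamard's lemma applied to the nilpotent ideal $\mathfrak R\od^2$. The Remark after Proposition~\ref{prop:superstruct} only shows that one truncated formula fails; it does not prove that the operations are forced.
\end{enumerate}

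The genuine gap is the final step. It needs $\mathfrak R\cong\bigwedge_{\overline{\mathfrak R}}\xi$ with $\xi=\mathfrak J_{\mathfrak R}/\mathfrak J_{\mathfrak R}^2$ free, and neither ingredient follows from $\sh_{\mathscr X}\cong\text{Gr}\,\sh_{\mathscr X}$.

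\textbf{Freeness cannot be read off.} Take $\mathfrak R=\Omega^\bullet(S^2)$, the functions on the supermanifold $\Pi TS^2$, which is fair by the paper's own conventions. Then $\mathcal J^k_{\mathscr X}=\Omega^{\geq k}$, so $\sh_{\mathscr X}=\text{Gr}\,\sh_{\mathscr X}$ canonically. Yet an isomorphism $\mathfrak R\cong\mathfrak C[\theta^1,\ldots,\theta^q]^{\pm}$ would induce a map $\Omega^1(S^2)\to\mathfrak C^q$ that is semilinear over a ring isomorphism $C^\infty(S^2)\cong\mathfrak C$. Such a map pulls a basis back to a basis, so $\Omega^1(S^2)$ would be free, contradicting the non-triviality of $T^*S^2$.

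\textbf{The graded comparison is unjustified.} Your claim that the map ``induces an isomorphism on associated graded objects'' fails because $\bigwedge_{\overline{\mathfrak R}}\xi\to\text{Gr}\,\mathfrak R$ is only surjective in general. Take the superring with even part $\R$, odd part $\R\theta^1\oplus\R\theta^2$, and all products of odd elements zero; it is fair, over a point. Then $\mathfrak J_{\mathfrak R}^2=0$, so $\mathfrak R=\text{Gr}\,\mathfrak R$ and $\mathscr X$ is split, with $\xi=\R^2$ free. But $\dim_{\R}\mathfrak R=3$ is not a power of $2$, so $\mathfrak R$ is not split.

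So ($\Rightarrow$) is false as stated. The paper's proof makes exactly these two leaps when it identifies $\text{Gr}\,\sh_{\mathscr X}(X)$ with a polynomial superring in global odd variables. It tacitly relies on a global frame and on Remark~\ref{EquiSymGr}, whose identification $\text{Sym}\,\mathcal F_{\mathscr X}\cong\text{Gr}\,\sh_{\mathscr X}$ fails in the second example.

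Your fallback of assuming a global trivialization of $\mathcal F_{\mathscr X}$ is therefore a change of hypothesis, not an implicit convention, and by itself it is still insufficient. If you also assume that $\sh_{\mathscr X}$ is locally of the form $\mathfrak C_U[\theta^1,\ldots,\theta^q]^{\pm}$ (so that $\text{Gr}\,\sh_{\mathscr X}\cong\bigwedge\mathcal F_{\mathscr X}$) and that $\mathcal F_{\mathscr X}$ is globally free, your section-plus-lift argument goes through.
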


\begin{proof}
  Let $\mathscr X=(X,\sh_{\mathscr X})=\text{sSpec}(\mathfrak{R})$ be a split $\C$-superscheme. By definition, $\mathscr X$ is isomorphic to $(X,\text{Gr}\, \sh_{\mathscr X})$. Taking global sections, we obtain an isomorphism of $\C$-superrings:
  
  $$s\Gamma(X)\cong \text{Gr}\, \sh_{\mathscr X}(X) \, .$$

Observe that $\text{Gr}\, \sh_{\mathscr X}(X)$ is a split $\C$-superring. Specifically, $\text{Gr}\, \sh_{\mathscr X}(X)$ is isomorphic to the superring $\sh_{\mathscr X_{red}}(X)[\theta^1,\ldots,\theta^q]^{\pm}$. Given that $\mathfrak{R}$ is fair, we have an isomorphism $s\Gamma(\text{sSpec}(\mathfrak{R}))\cong \mathfrak{R}$, by \cite[Proposition 4.36]{ORTG}. Therefore, $\mathfrak{R}\cong \sh_{\mathscr X_{red}}(X)[\theta^1,\ldots,\theta^q]^{\pm}$, which implies that $\mathfrak{R}$ is split. \\

Conversely, if $\mathfrak{R}$ is split, then it is of the form $\mathfrak{C}[\theta^1,\ldots,\theta^q]^{\pm}$ for some $\C$-ring $\mathfrak{C}$. Then for any open subset $U\subseteq X$ we have $\sh_{\mathscr X}(U)=\sh_{\mathfrak{C}}(U)[\theta^1,\ldots,\theta^q]^{\pm}\cong Gr \, \sh_{\mathscr X}(U)$ (where $\sh_\mathfrak{C}$ is the structure sheaf of $\text{Spec}(\mathfrak{C})$) which allows us to conclude that $\mathscr X$ is split.
\end{proof}

\begin{remark}
If $\mathscr X$ is locally fair and locally split, then for any $x\in X$, there exists an open subset $U\subseteq X$ such that the ring of sections $\sh_{\mathscr X}(U)$ is a split $\C$-superring.
\end{remark}

We have a natural closed embedding $(Id,\pi) : \mathscr X_{red} \rightarrow \mathscr X$, which acts as the identity on the underlying topological space $X$, together with an associated morphism of sheaves:
$$
\pi:\sh_{\mathscr X}\rightarrow \sh_{\mathscr X_{red}}=\mathcal O_{\mathscr X}/\mathcal J_{\mathscr X}.
$$
This morphism is induced by the projection to the quotient, yielding the following \emph{structural short exact sequence} of $\mathscr X$:
\begin{equation}\label{sses}
0\longrightarrow \mathcal{J}_{\mathscr X} \stackrel{\iota} \longrightarrow \sh_{\mathscr X}\stackrel{\pi}{\longrightarrow} \sh_{\mathscr X red}\longrightarrow 0 \, .
\end{equation}

\begin{definition}

A  $\C$-superspace $\mathscr X= (X,\mathcal O_{\mathscr X}) $ is \emph{projected} if  there exists a morphism of sheaves $\sigma:\sh_{\mathscr X_{red}}\longrightarrow\sh_{\mathscr X}$, such that $\pi \circ \sigma= \mathrm{id}_{\sh_{\mathscr X_{red}}}$.
In other words,its structural short exact sequence splits.
\end{definition}

\begin{proposition}
A split $\C$-superspace is projected. 
\end{proposition}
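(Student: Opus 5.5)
The plan is to use the degree-zero component of the associated graded sheaf. This component is canonically the reduced sheaf, and it sits inside $\mathrm{Gr}\,\mathcal O_{\mathscr X}$ as a subsheaf of superrings. Transporting it back to $\mathcal O_{\mathscr X}$ through the splitting isomorphism should give the required section $\sigma$.

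\textbf{Step 1: the model section.} Let $\mathscr X$ be split, and fix an isomorphism of $\C$-superspaces $(\mathrm{id},\psi)\colon \mathscr X\to \mathrm{Gr}\,\mathscr X$ over the identity of $X$. It is given by an isomorphism of sheaves of $\C$-superrings $\psi\colon \mathrm{Gr}\,\mathcal O_{\mathscr X}\to \mathcal O_{\mathscr X}$. We have $\mathrm{Gr}^{(0)}\mathcal O_{\mathscr X}=\mathcal O_{\mathscr X}/\mathcal J_{\mathscr X}=\sh_{\mathscr X_{red}}$. Hence there is a canonical inclusion
\[
s\colon \sh_{\mathscr X_{red}}\hookrightarrow \mathrm{Gr}\,\mathcal O_{\mathscr X}
\]
as the degree-zero summand, and a canonical projection $p\colon \mathrm{Gr}\,\mathcal O_{\mathscr X}\to \mathrm{Gr}^{(0)}=\sh_{\mathscr X_{red}}$. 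These satisfy $p\circ s=\mathrm{id}$.

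\textbf{Step 2: comparing the two projections.} I will check that $p$ is exactly the canonical projection $\pi_{\mathrm{Gr}}$ of $\mathrm{Gr}\,\mathcal O_{\mathscr X}$ onto its own reduction. The canonical superideal of $\mathrm{Gr}\,\mathcal O_{\mathscr X}$ is generated by its odd sections. Locally, by Remark~\ref{EquiSymGr}, $\mathrm{Gr}\,\mathcal O_{\mathscr X}\cong\mathrm{Sym}\,\mathcal F_{\mathscr X}$, and this ideal is $\bigoplus_{k\ge1}\mathrm{Gr}^{(k)}$. So $\mathcal J_{\mathrm{Gr}\,\mathscr X}=\bigoplus_{k\geq 1}\mathrm{Gr}^{(k)}$, and $\pi_{\mathrm{Gr}}=p$.

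\textbf{Step 3: transport.} Any isomorphism of sheaves of superrings maps odd sections bijectively onto odd sections. Therefore $\psi(\mathcal J_{\mathrm{Gr}\,\mathscr X})=\mathcal J_{\mathscr X}$, and $\psi$ descends to an isomorphism $\bar\psi\colon \sh_{\mathscr X_{red}}\to \sh_{\mathscr X_{red}}$ with $\pi\circ\psi=\bar\psi\circ p$. I then define
\[
\sigma:=\psi\circ s\circ \bar\psi^{-1}.
\]
This gives $\pi\circ\sigma=\bar\psi\circ p\circ s\circ\bar\psi^{-1}=\mathrm{id}$, so the structural sequence \eqref{sses} splits and $\mathscr X$ is projected.

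\textbf{Step 4: compatibility of $s$, the main obstacle.} Most of the argument is bookkeeping. The only real point is whether $s$ is a morphism of sheaves of $\C$-superrings and not merely of modules. For that, the $\C$-structure on $\mathrm{Gr}^{(0)}$ must agree with the restriction of the $\C$-structure on $\mathrm{Gr}\,\mathcal O_{\mathscr X}$. I would check this locally on the exterior-algebra model using formula \eqref{eq:taylorPhi1}: when all $N_j=0$, only the $n=0$ term survives, so $\Phi_h(f_1,\ldots,f_k)=\phi_h(f_1,\ldots,f_k)$. If the definition of projected is read as requiring only a morphism of sheaves, this step is unnecessary.
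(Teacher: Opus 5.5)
Your proof is correct and follows the same route as the paper: include $\mathrm{Gr}^{(0)}\sh_{\mathscr X}=\sh_{\mathscr X_{red}}$ as the degree-zero summand of $\mathrm{Gr}\,\sh_{\mathscr X}$, then transport it to $\sh_{\mathscr X}$ through the splitting isomorphism. Your correction by $\bar\psi^{-1}$ is a genuine improvement over the paper, not mere bookkeeping: the paper's claim that $\pi\circ\sigma=\mathrm{id}$ holds ``by construction'' is only valid when the chosen isomorphism induces the identity on $\sh_{\mathscr X_{red}}$ (e.g.\ for a splitting in the sense of Definition~\ref{spliting}), whereas an arbitrary isomorphism $\mathscr X\cong\mathrm{Gr}\,\mathscr X$ induces only some automorphism $\bar\psi$ of $\sh_{\mathscr X_{red}}$.
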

\begin{proof}
Let $\mathscr{X}=(X,\mathcal{O}_{\mathscr{X}})$ be a split
$C^\infty$-superspace. By Definition~\ref{def:severaldef}, there exists an isomorphism of locally ringed $C^\infty$-superspaces
$\varphi:\mathscr{X}\xrightarrow{\sim}
\bigl(X,\operatorname{Gr}\mathcal{O}_{\mathscr{X}}\bigr)$. Since
\[
\operatorname{Gr}^0\mathcal{O}_{\mathscr{X}}
=
\mathcal{O}_{\mathscr{X}}/\mathcal{J}_{\mathscr{X}}
=
\mathcal{O}_{\mathscr{X}_{\mathrm{red}}},
\]
the canonical inclusion of the degree-zero component induces a morphism
$\iota:\mathcal{O}_{\mathscr{X}_{\mathrm{red}}}
\hookrightarrow
\operatorname{Gr}\mathcal{O}_{\mathscr{X}}$. Transporting this morphism through the isomorphism $\varphi$, we obtain a
morphism $\sigma:\mathcal{O}_{\mathscr{X}_{\mathrm{red}}}
\longrightarrow\mathcal{O}_{\mathscr{X}}$. By construction, the composition with the canonical projection $\pi:\mathcal{O}_{\mathscr{X}}
\longrightarrow\mathcal{O}_{\mathscr{X}_{\mathrm{red}}}$
is the identity. Hence $\pi\circ\sigma
=\operatorname{id}_{\mathcal{O}_{\mathscr{X}_{\mathrm{red}}}}$,
and therefore the structural short exact sequence splits. Thus,
$\mathscr{X}$ is projected.
\end{proof}  

A consequence of the last theorem is that we have a way to exhibit examples of projected and non-projected $\C$-superspaces. 
Furthermore, we obtain the following sequences of inclusions of  $\C$-superspaces
$$\{\text{Split}\}\subset \{\text{projected}\}\subset \{\text{locally-split}\}$$.

\begin{example}\label{nonprojected}
Consider the $\C$-superring $\mathfrak{R}=C(\R^{1|2})/(x^2+ \theta^1\theta^2)$. As we proved in \ref{nonsplit} there is no morphism $\sigma:\overline{\mathfrak{R}}\rightarrow \mathfrak{R}$  that is right inverse to the canonical projection $\pi:\mathfrak{R}\rightarrow \overline{\mathfrak{R}} $ equivalently, the exact sequence \begin{equation*} 
0\longrightarrow \mathfrak{J}_{\mathfrak{R}} \stackrel{\iota} \longrightarrow \mathfrak{R}\stackrel{\pi}{\longrightarrow} \overline{\mathfrak{R}}\longrightarrow 0
\end{equation*} does not split. 
On the other side, $\mathfrak{R}$ is a fair $\C$-superring since its reduced part $\C(\R)/(x^2)$ is fair.
Now for a $\C$-superspace to be nonprojected it is enough to show an open subset $U$ of $X$ such that the sequence \begin{equation*}
0\longrightarrow \mathcal{J}_{\mathscr X}(U) \stackrel{\iota} \longrightarrow \sh_{\mathscr X}(U)\stackrel{\pi}{\longrightarrow} \sh_{\mathscr X red}(U)\longrightarrow 0 \, .
\end{equation*}
does not split. Since $\mathfrak{R}$ is fair, we have that $\mathcal{O}_{\mathscr X}(X)\cong \mathfrak{R}$, thus taking $U=X$ we have that $\mathscr X$ is non projected.
\end{example}

\begin{definition}\label{spliting}
A \emph{splitting} of a $\C$-superspace $\mathscr X$ is an isomorphism $\mathscr X\to \text{Gr}\, \mathscr X$ that induces the identity on $\text{Gr}\, \mathscr X$.  
\end{definition}

Stated differently, a splitting of $\mathscr X= (X,\sh_{\mathscr X})$ is an isomorphism  $\sigma : \operatorname{Gr}\sh_{\mathscr X} \to\sh_{\mathscr X}$ such that for any $p$, where $0 \leq p \leq n$, the restriction $\sigma|_{\operatorname{Gr}^{(p)} \sh_{\mathscr X}}$ is a lift of the canonical morphism  $\mathcal{J}^p\to \operatorname{Gr}^{(p)}\sh_{\mathscr X}$ \cite[Section 3]{Koszul}. \\

In the smooth category, all supermanifolds are globally split, a property that fails to hold in the complex analytic category. This distinction is fundamentally rooted in the cohomology of sheaves. Specifically, for any fine sheaf $\mathcal{F}$ over a topological space, the higher cohomology groups vanish: $H^i(M, \mathcal{F}) = 0$ for all $i > 0$ \cite[Proposition 4.36]{Voisin}. A sheaf of modules over a sheaf of commutative rings $\mathcal{O}_M$ is defined as fine if $\mathcal{O}_M$ admits partitions of unity \cite[Proposition 4.35]{Voisin}. In the smooth setting, the existence of smooth partitions of unity ensures that the relevant sheaves are fine, thereby guaranteeing the triviality of the obstructions to global splitting. Conversely, Batchelor’s Theorem does not extend to the complex category because complex manifolds do not admit holomorphic partitions of unity. Consequently, the associated sheaves are not fine, allowing for non-vanishing cohomology groups that act as obstructions to a global split.\\

Given that $C^\infty$-superspaces constitute a category that properly contains smooth supermanifolds, it is natural to investigate whether global splitness is a property unique to the smooth supermanifold case. To delineate a broader class of $\C$-superspaces susceptible to global splitting, we isolate two sufficient conditions foundational to the proof of Batchelor’s Theorem (cf. \cite[Section 4.2]{Manin}):
\begin{enumerate}
\item Topological/Algebraic Condition: The structure sheaf must admit a partition of unity, ensuring that the sheaf is fine and that higher cohomology groups vanish.

\item Geometric Condition: The superspace must be locally split and possess a constant odd dimension.
\end{enumerate}

By formalizing these requirements, we arrive at the following definition:

\begin{definition}\label{BatchelorSpace}
A $\C$-superspace $\mathscr X= (X,\sh_{\mathscr X})$ is defined as a \emph{Batchelor space} if it is locally split and its structure sheaf $\sh_{\mathscr X}$ is fine. 
\end{definition}

According to Theorem 4.40 in \cite{J}, if a $C^\infty$-scheme $(X, \mathcal{O}_X)$ is defined over a Lindel\"of topological space $X$ with a smoothly generated topology, then the structure sheaf $\mathcal{O}_X$ is necessarily fine. This result establishes that a broad class of $C^\infty$-schemes possesses fine structure sheaves. Recall that smooth supermanifolds can be viewed as the superization of the sheaf of $C^\infty$-rings associated with a classical smooth manifold (cf. Example \ref{supchemes}). Consequently, to construct a Batchelor space that is not a supermanifold, one may superize a structure sheaf of $C^\infty$-rings that is both locally split and fine, yet does not originate from a smooth manifold (cf. Example \ref{difsuperspace}). It follows that the category of Batchelor spaces contains the category of smooth supermanifolds as a full, proper subcategory. We emphasize that the class of Batchelor spaces is strictly larger than that of smooth supermanifolds, providing a more general framework for global splitness.\\

A foundational result established by Joyce in \cite[ Theorem 4.20]{J} demonstrates a categorical equivalence between fair $C^\infty$-rings and fair affine $C^\infty$-schemes. This equivalence was subsequently extended to the supergeometric setting—specifically to $C^\infty$-superrings—in \cite{ORTG}. Given this correspondence, locally fair $C^\infty$-superspaces emerge as a highly robust and natural framework for developing smooth supergeometry (cf. \cite[Remark 4.24]{ORTG}). A central inquiry then arises: are locally fair $C^\infty$-superspaces necessarily Batchelor spaces? We provide an affirmative resolution to this question in the following proposition.

\begin{proposition}\label{prop:fairBat}
Let $\mathscr X = (X,\sh_{\mathscr X})$ be an affine fair and split $\C$-superspace. Then $\mathscr X$ is a Batchelor space.
\end{proposition}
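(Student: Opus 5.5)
We must show two things: that $\mathscr X$ is locally split, and that $\sh_{\mathscr X}$ is fine. Write $\mathscr X\cong \text{sSpec}(\mathfrak R)$ with $\mathfrak R$ fair.

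\emph{Local splitness.} This is essentially formal. Since $\mathscr X$ is split, Theorem~\ref{thm:fair+split} applies because $\mathfrak R$ is fair, and gives $\mathfrak R\cong \mathfrak C[\theta^1,\ldots,\theta^q]^{\pm}$ for a $\C$-ring $\mathfrak C$. We can take $\mathfrak C=\overline{\mathfrak R}$, which is fair because $\mathfrak R$ is. Localizing as in Example~\ref{supchemes} (cf.\ \cite[Example 4.16]{ORTG}), for every open $U\subseteq X$ we get
$$\sh_{\mathscr X}(U)\cong \sh_{\mathfrak C}(U)[\theta^1,\ldots,\theta^q]^{\pm},$$
compatibly with restrictions. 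Hence every restriction $(U,\sh_{\mathscr X}|_U)$ is split, and taking $U=X$ already gives the required neighbourhood of each point. Thus $\mathscr X$ is locally split, in fact of constant rank $q$.

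\emph{Fineness.} The main point is to produce partitions of unity for $\sh_{\mathscr X}$. I would reduce to the reduced $C^\infty$-scheme $\text{Spec}(\mathfrak C)$ and then superize.
\begin{enumerate}
\item $\mathfrak C$ is finitely generated, so $\mathfrak C\cong \C(\R^n)/I$. The space $X=X_{\mathfrak C}$ is then a closed subset of $\R^n$ with the subspace topology. It is therefore Lindel\"of, paracompact and Hausdorff, and its topology is smoothly generated, since the sets $\{x\,:\,c(x)\neq 0\}$ with $c\in\mathfrak C$ form a basis.
\item By \cite[Theorem 4.40]{J}, $\sh_{\mathfrak C}$ is fine. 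More concretely, for any open cover $\{U_\alpha\}$ there exist $\rho_\alpha\in\mathfrak C$ with locally finite supports $\mathrm{supp}\,\rho_\alpha\subseteq U_\alpha$ and $\sum\rho_\alpha=1$. These can be obtained by restricting a smooth partition of unity on $\R^n$ subordinate to open sets extending the $U_\alpha$.
\item Embed $\mathfrak C$ into $\mathfrak R$ as the degree-zero part of the splitting. The $\rho_\alpha$ then become even sections of $\sh_{\mathscr X}(X)$ with the same supports, since $\sh_{\mathscr X}(U)$ is a free $\sh_{\mathfrak C}(U)$-module on the monomials $\theta^I$ and so a section vanishes on $U$ exactly when all its coefficients do. The sum $\sum\rho_\alpha=1$ still holds in $\sh_{\mathscr X}$.
\item Multiplication by $\rho_\alpha$ defines endomorphisms of $\sh_{\mathscr X}$, and of any $\sh_{\mathscr X}$-module, supported in $U_\alpha$, locally finite, and summing to the identity. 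This is the definition of fineness used in \cite[Proposition 4.35]{Voisin}.
\end{enumerate}

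The delicate step is step 1: checking that the hypotheses of \cite[Theorem 4.40]{J} hold for $\text{Spec}(\mathfrak C)$, that is, identifying the topology of $X_{\mathfrak C}$ with the subspace topology on the zero set of $I$. If that identification is awkward, I would instead build the partition of unity directly from $\C(\R^n)$ and push it through the quotient map, which avoids the theorem altogether.
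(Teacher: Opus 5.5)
Your proposal is correct and takes essentially the paper's route. Local splitness is immediate by taking $U=X$. Fineness comes from the fair reduced $\C$-ring $\mathfrak C$: its space of $\R$-points is a closed (hence Lindel\"of) subset of $\R^n$ with smoothly generated topology, so Joyce's partition-of-unity theorem applies. Your step~3 transports the partition of unity from $\sh_{\mathfrak C}$ to $\sh_{\mathscr X}$ through the splitting $\mathfrak R\cong\mathfrak C[\theta^1,\ldots,\theta^q]^{\pm}$. This makes explicit a point the paper passes over, since it applies Joyce's result, which is stated for $\C$-schemes, directly to the super structure sheaf.
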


\begin{proof}
The hypothesis that $\mathscr{X}$ is an affine fair and split $C^\infty$-superspace implies that $\mathscr{X}$ is isomorphic to the superspectrum of a fair split $C^\infty$-superring $\mathcal{R}$. By Proposition 4.20 of \cite{ORTG}, the reduced part of $\mathfrak {R}$ is a fair $C^\infty$-ring $\mathfrak C$. It follows that the underlying topological space $X$ is homeomorphic to the space of $\mathbb{R}$-points of $\mathfrak C$, which can be embedded as a closed subset of $\mathbb{R}^n$ for some $n \in \mathbb{N}$. As a closed subspace of Euclidean space, $X$ is necessarily Lindel\"of and possesses a smoothly generated topology. Consequently, by Theorem 4.20 in \cite{J}, the structure sheaf $\mathcal{O}_{\mathscr X}$ is fine. Given that $\mathscr {X}$ is split by assumption (and thus locally split), it satisfies the criteria for a Batchelor space.
\end{proof}

The proof establishing that every Batchelor space is globally split follows the construction for supermanifolds developed by Manin in \cite[Chapter 4. \S 2]{Manin} (see also \cite[Part I. Chapter 4. Sections 5-7]{Berezin}, \cite[Section 3.1]{Noja} and \cite[Section 2.2]{DonagiOtt}). Adopting Manin's approach, we first demonstrate that every Batchelor space is projected, subsequently showing that it is globally split. Throughout the following discourse, the tangent sheaf over the underlying topological space $X$ is denoted by $\mathcal{T}_X$. 

\begin{proposition}\label{projected}
Let $\mathscr X=(X,\sh_{\mathscr X})$ be a Batchelor superspace and let $\mathscr X_{red}=(X, \sh_{\mathscr X_{red}})$ be its associated reduced space, equipped with the canonical projection $\pi:\mathcal O_{\mathscr X} \to \mathcal O_{\mathscr X_{red}}$. Then $\mathscr X$ is projected; that is, there exist a morphism of $\C$-superspaces $(Id_X,\sigma)$, where $\sigma: \mathcal O_{\mathscr X_{red}} \to \mathcal O_{\mathscr X}$ is a morphism of sheaves of $C^\infty$-superrings acting as a right inverse to $\pi$, satisfying $ \pi\circ \sigma= Id_{\mathcal O_{\mathscr X_{red}}}$.    
\end{proposition}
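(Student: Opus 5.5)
The plan follows Manin's inductive construction: build the section $\sigma$ step by step through the finite $\mathcal J_{\mathscr X}$-adic filtration, lifting at each step with an obstruction that lies in a cohomology group $H^1$ of a fine sheaf. Since $\mathscr X$ is locally split of some rank $q$, the filtration \eqref{Jfiltration} is finite: on a splitting neighbourhood $\mathcal J_{\mathscr X}^{q+1}=0$, hence globally. For $k\geq 1$ set $\mathscr X^{(k)}=(X,\sh_{\mathscr X}/\mathcal J^{k+1}_{\mathscr X})$ and $\pi_k:\sh_{\mathscr X}/\mathcal J^{k+1}\to \sh_{\mathscr X_{red}}$. I would prove by induction on $k$ that there is a morphism of sheaves of $\C$-superrings $\sigma_k:\sh_{\mathscr X_{red}}\to \sh_{\mathscr X}/\mathcal J^{k+1}$ with $\pi_k\circ\sigma_k=\mathrm{id}$. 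For $k=q$ this is exactly the required $\sigma$. Since $\sh_{\mathscr X_{red}}$ is purely even, any such $\sigma_k$ lands in the even part, and only even filtration levels actually matter. Concretely, the target of $\sigma_k$ is the even part of $\sh_{\mathscr X}/\mathcal J^{k+1}$, and the steps with $k$ odd are trivial, because $(\mathcal J^{k}/\mathcal J^{k+1})_{\bar 0}=0$ there.

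For the inductive step, take $\sigma_{k-1}$ and cover $X$ by opens $U_\alpha$ on which $\sh_{\mathscr X}|_{U_\alpha}\cong \mathfrak C_{U_\alpha}[\theta^1,\dots,\theta^q]^{\pm}$. On each $U_\alpha$ the split model gives a local section $s_\alpha:\sh_{\mathscr X_{red}}|_{U_\alpha}\to\sh_{\mathscr X}/\mathcal J^{k+1}$, namely the inclusion of $\mathfrak C_{U_\alpha}$. This is a $\C$-morphism by the construction of Proposition~\ref{prop:superstruct}, since $\Phi_h$ restricted to elements with zero nilpotent part is $\phi_h$. Reducing $s_\alpha$ mod $\mathcal J^{k}$ and comparing with $\sigma_{k-1}$, the two lifts of the identity differ. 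I would first modify $s_\alpha$ so that it agrees with $\sigma_{k-1}$ mod $\mathcal J^{k}$. For this I would actually run the induction on the local difference as well, using the fact that the difference of two ring lifts is a derivation valued in a square-zero ideal. Then on overlaps, $\delta_{\alpha\beta}:=s_\alpha-s_\beta$ takes values in $\mathcal J^k/\mathcal J^{k+1}$, which squares to zero in $\sh/\mathcal J^{k+1}$ when $k\ge1$. Hence $\delta_{\alpha\beta}$ is a derivation $\sh_{\mathscr X_{red}}\to \mathrm{Gr}^{(k)}\sh_{\mathscr X}$, and I would check it is a $C^\infty$-derivation in the sense of Definition~\ref{def:Csuperderivation}. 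This follows by subtracting the Taylor formulas \eqref{eq:taylorPhi1} for $s_\alpha(\phi_h(c))$ and $s_\beta(\phi_h(c))$: only the linear term survives modulo $\mathcal J^{k+1}$ because the difference is square-zero. So $\{\delta_{\alpha\beta}\}$ is a Čech $1$-cocycle with values in the sheaf $\mathcal T_{X}\otimes \mathrm{Gr}^{(k)}\sh_{\mathscr X}$ of $C^\infty$-derivations of $\sh_{\mathscr X_{red}}$ into $\mathrm{Gr}^{(k)}\sh_{\mathscr X}$. By Remark~\ref{EquiSymGr}, $\mathrm{Gr}^{(k)}\sh_{\mathscr X}\cong \mathrm{Sym}^k\mathcal F_{\mathscr X}$ is a sheaf of $\sh_{\mathscr X_{red}}$-modules.

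Since $\sh_{\mathscr X}$ is fine, so is $\sh_{\mathscr X_{red}}$: partitions of unity descend along $\pi$. Therefore every sheaf of $\sh_{\mathscr X_{red}}$-modules, in particular this derivation sheaf, is fine and has vanishing $H^1$. Concretely, with a partition of unity $\{\rho_\gamma\}$ subordinate to the cover, put $\eta_\alpha=\sum_\gamma \rho_\gamma\delta_{\gamma\alpha}$. Then $\delta_{\alpha\beta}=\eta_\beta-\eta_\alpha$, so the corrected maps $s_\alpha+\eta_\alpha$ agree on overlaps and glue, by the sheaf property (cf.\ Lemma~\ref{gluing}), to a global $\sigma_k$. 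Adding a $C^\infty$-derivation into a square-zero ideal to a $\C$-morphism is again a $\C$-morphism, by the same first-order Taylor argument, so $\sigma_k$ is a morphism of sheaves of $C^\infty$-rings lifting $\sigma_{k-1}$.

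I expect the main obstacle to be the $C^\infty$-compatibility, rather than the mere ring-compatibility, of differences and corrections. One must verify carefully that, modulo $\mathcal J^{k+1}$ with the ideal $\mathcal J^k/\mathcal J^{k+1}$ square-zero, the operations $\Phi_h$ on $\sh_{\mathscr X}/\mathcal J^{k+1}$ satisfy $\Phi_h(c+n)=\Phi_h(c)+\sum_i\Phi_{\partial_ih}(c)n_i$. On split charts this follows from \eqref{eq:taylorPhi1}, and it transfers via the local isomorphisms. A secondary delicate point is the preliminary adjustment of the local sections $s_\alpha$ to agree with $\sigma_{k-1}$. I would handle it by noting that, locally, $\sigma_{k-1}|_{U_\alpha}$ itself lifts: on a split chart, any $\C$-section mod $\mathcal J^{k}$ can be lifted by applying the chart's Taylor formula to a set-theoretic lift of its values.
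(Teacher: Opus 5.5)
Your route is the paper's: Manin's induction along the finite $\mathcal J_{\mathscr X}$-adic filtration. At each stage the obstruction is a \v{C}ech $1$-cocycle of derivations $\sh_{\mathscr X_{red}}\to\mathrm{Gr}^{(k)}\sh_{\mathscr X}$, and it vanishes because the coefficient sheaf is a module over the fine sheaf $\sh_{\mathscr X_{red}}$. Your checks that the cocycle is a $\C$-derivation, and that adding a $\C$-derivation into a square-zero ideal preserves $\C$-morphisms, are more careful than the paper, which never checks the $\C$-side.

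The genuine gap is the step you call secondary, and it is what makes the induction run. On each split chart you need a $\C$-morphism $\sh_{\mathscr X_{red}}|_{U_\alpha}\to\sh_{\mathscr X}/\mathcal J^{k+1}$ that reduces to $\sigma_{k-1}|_{U_\alpha}$ modulo $\mathcal J^{k}$; otherwise $\delta_{\alpha\beta}$ neither lands in $\mathcal J^k/\mathcal J^{k+1}$ nor is a derivation. Your justifications do not supply this lift.
\begin{itemize}
\item The discrepancy between $s_\alpha$ and $\sigma_{k-1}$ lies in $(\mathcal J^2/\mathcal J^k)\ev$, which is not square-zero once $k\geq 5$.
\item Correcting one layer at a time by \emph{adding} a derivation $\delta$ with values in $\mathcal J^j/\mathcal J^{j+1}$ destroys multiplicativity modulo $\mathcal J^{k+1}$ as soon as $j+2\leq k$, because of the error terms $\delta(f)\delta(g)$ and $(s(f)-f)\,\delta(g)$.
\item ``Applying the Taylor formula to a set-theoretic lift of the values'' only works when a $\C$-morphism out of $\mathfrak C_{U_\alpha}$ can be specified by freely chosen images of generators, as for $\C(\R^n)$ (essentially the supermanifold case). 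For a Batchelor space, $\mathfrak C_{U_\alpha}$ is an arbitrary $\C$-ring, say $\C(\R^n)/K$. The lifted images must then still be annihilated by $K$, which is again a lifting problem with an a priori nonzero obstruction.
\item Lifting \emph{all} values set-theoretically, e.g.\ by truncating in $\theta$-degree, is not multiplicative. Take $\mathfrak C=\C(\R)$, $q=4$, $\nu=\theta^1\theta^2+\theta^3\theta^4$ and $\sigma_3(f)=f+f'\nu$: the truncated lift misses $f'g'\nu^2=2f'g'\,\theta^1\theta^2\theta^3\theta^4$.
\end{itemize}

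The missing idea is to integrate derivations into automorphisms of the split model, which is possible in characteristic zero. Write $\sh_{\mathscr X}(U_\alpha)\cong\mathfrak C_\alpha\otimes_{\R}\Lambda(\R^q)$ and $B=\Lambda(\R^q)\ev/\Lambda^{\geq k}$. The $B$-linear extension $\hat\sigma$ of $\sigma_{k-1}|_{U_\alpha}$ is a unipotent automorphism of $\mathfrak C_\alpha\otimes B$. Its logarithm is therefore a derivation $\sum_p D_p\otimes m_p$ with $D_p\in\mathrm{Der}(\mathfrak C_\alpha)$. Lifting the $m_p$ to $\Lambda(\R^q)\ev/\Lambda^{\geq k+1}$ and exponentiating gives the required local lift. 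In your layer-by-layer version, this means replacing $s+\delta$ by $\exp(\hat\delta)\circ s$, where $\hat\delta$ is the $\Lambda$-linear extension of $\delta$; it raises $\theta$-degree by $j\geq 2$ and so is nilpotent.

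The $\C$-compatibility must then be checked. By \eqref{eq:taylorPhi1}, both $\exp(tD)\Phi_h(c)$ and $\Phi_h(\exp(tD)c)$ are polynomial in $t$.
\begin{itemize}
\item Since $\exp(t\log\hat\sigma)=\hat\sigma^t$ is a $\C$-morphism for $t\in\mathbb Z_{\geq 0}$, the identity holds for all $t$. Differentiating at $t=0$ shows that $\log\hat\sigma$ is a $\C$-derivation.
\item Conversely, both sides solve $X'=DX$, so $\exp$ of a nilpotent $\C$-derivation is a $\C$-morphism.
\end{itemize}

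The paper's own proof does not close this gap either. Its local lifts $j^i_\alpha\circ\sigma^i_{U_\alpha}$ are exactly the degree-truncated lifts above. Their multiplicativity is used without justification when the paper shows that $\omega_{\alpha\beta}$ is a derivation.
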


\begin{proof}
The proof is formally analogous to the construction for supermanifolds developed by Manin in \cite[Chapter 4. \S 2. Lemma 4]{Manin}. The argument proceeds via an inductive lifting argument along the nilpotent filtration of the ideal $\mathcal J_{\mathscr X}$, subsequently employing the vanishing of sheaf cohomology for fine sheaves to patch local algebraic splittings into a global one. The primary conceptual distinction lies in the fact that the morphism $\sigma:\mathcal O_{\mathscr X_{red}}\longrightarrow \mathcal O_{\mathscr X}$ is a morphism of sheaves of $\C$-superrings.\\
Consider the collection of ringed spaces $\mathscr X^{(k)}=\left( X, \mathcal O^{(k)}_{\mathscr X} \right)$, where the structure sheaves are defined by the quotients $\mathcal O^{(k)}_{\mathscr X}:=\mathcal O_{\mathscr X}/\mathcal J_{\mathscr X}^k$. Note that for $k=1$, we have $\mathcal O_{\mathscr X}^{(1)}=O_{\mathscr X}/\mathcal J_{\mathscr X}=\mathcal O_{\mathscr X_{red}}$. If the odd dimension of $\mathscr X$ is $q$,  then $\mathcal J_{\mathscr X}^{q+1}=0$, implying $\mathcal O_{\mathscr X}^{(q+1)}=O_{\mathscr X}/\mathcal J_{\mathscr X}^{q+1}=\mathcal O_{\mathscr X}$. The $\mathcal J_{\mathscr X}$-adic filtration (Equation \ref{Jfiltration}) induces a sequence of projections of structure sheaves:
$$
O_{\mathscr X}=\mathcal O_{\mathscr X}/\mathcal J_{\mathscr X}^{q+1}
 \twoheadrightarrow 
 \mathcal O_{\mathscr X}/\mathcal J_{\mathscr X}^{q}
 \twoheadrightarrow \cdots \twoheadrightarrow \mathcal O_{\mathscr X}/\mathcal J_{\mathscr X}^{i+1} \twoheadrightarrow \mathcal O_{\mathscr X}/\mathcal J_{\mathscr X}^{i} \cdots \twoheadrightarrow  \mathcal O_{\mathscr X}/\mathcal J_{\mathscr X}^2 \twoheadrightarrow \mathcal O_{\mathscr X}/\mathcal J_{\mathscr X}=\mathcal O_{\mathscr X_{red}} \, .
$$
Following Manin’s iterative method, we construct the morphism $\sigma$ inductively. We prove by induction on $i\geq 1$  that there exists a morphism:
$$\sigma^i:  \mathcal O_{\mathscr X_{red}}\rightarrow \sh_{\mathscr X}/\mathcal J_{\mathscr X}^i$$
such that $\mathcal O_{\mathscr X_{red}}\xrightarrow{\sigma^i} \mathcal O_{\mathscr X}/\mathcal J_{\mathscr X}^i \twoheadrightarrow \mathcal O_{\mathscr X_{red}}$  is the identity.\\
For the base case $i=1$, we define $\sigma^1:=Id_{ \sh_{\mathscr X_{red}}}: \sh_{\mathscr X_{red}}\rightarrow \sh_{\mathscr X_{red}}$.\\
For the inductive step, assume there exists a morphism $\sigma^i:  \mathcal O_{\mathscr X_{red}}\rightarrow \mathcal O_{\mathscr X}/\mathcal J_{\mathscr X}^i$ such that  $\pi^i \circ \sigma^i = Id_{\mathcal{O}_{\mathscr{X}_{\text{red}}}}$, where $\pi^i$ is the projection $\pi^{i}: \sh_{\mathscr X}/\mathcal J_{\mathscr X}^i\rightarrow  \sh_{\mathscr X_{red}}$. We seek to construct a lifting $\sigma^{i+1}: \mathcal{O}_{\mathscr{X}_{\text{red}}} \to \mathcal O_{\mathscr X}/\mathcal J_{\mathscr X}^{i+1}$. Furthermore, note that for any $i \geq 1$, the lifting $\sigma^i$ is non-canonical; the difference between any two such liftings is measured by higher-order terms in the fermionic sheaf $\mathcal F=\mathcal J_{\mathscr X}/\mathcal J_{\mathscr X}^2$. For each $i \geq 1$ we have the exact sequence of sheaves:
$$ 0 \rightarrow \mathcal J_{\mathscr X}^{i}/\mathcal J_{\mathscr X}^{i+1} \rightarrow \sh_{\mathscr X}/\mathcal J_{\mathscr X}^{i+1} \xrightarrow{\pi^i} \sh_{\mathscr X}/\mathcal J_{\mathscr X}^i\rightarrow 0 \, .
$$\\
Let $\mathcal{U} = \{U_\alpha\}$ be an open cover of $X$. Since $\mathscr{X}$ is locally split, we may choose $U_\alpha$ such that $\sh_{U_\alpha}:=\mathcal{O}_{\mathscr{X}}|_{U_\alpha}$ is a split structure sheaf, yielding the decomposition:
$$
\sh_{U_\alpha}=\frac{\sh_{U_\alpha}}{\mathcal{J}_{U_\alpha}}\oplus \frac{\mathcal{J}_{U_\alpha}}{\mathcal{J}_{U_\alpha}^2}\oplus \frac{\mathcal{J}_{U_\alpha}^2}{\mathcal{J}_{U_\alpha}^3}\oplus\ldots\oplus \frac{\mathcal{J}_{U_\alpha}^q}{\mathcal{J}_{U_\alpha}^{q+1}} \, .
$$
Consider the following sequence of morphisms   
$$ 
 \frac{\sh_{U_\alpha}}{\mathcal{J}_{U_\alpha}}\stackrel{\sigma^{i}_{U_\alpha}}\longrightarrow \frac{\sh_{U_\alpha}}{\mathcal{J}^{i}_{U_\alpha}}\stackrel{j_\alpha^i}\longrightarrow \frac{\sh_{U_\alpha}}{\mathcal{J}^{i+1}_{U_\alpha}}, 
 $$
 where $j_\alpha^i$ is the inclusion of $\frac{\sh_{U_\alpha}}{\mathcal{J}^{i}_{U_\alpha}}$ in $\frac{\sh_{U_\alpha}}{\mathcal{J}^{i+1}_{U_\alpha}}=\frac{\sh_{U_\alpha}}{\mathcal{J}^{i}_{U_\alpha}}\oplus \frac{\mathcal{J}^{i}
_{U_\alpha}}{\mathcal{J}^{i+1}_{U_\alpha}}$.\\
On each $U_\alpha$, we define local liftings $\sigma^{i+1}_{U_\alpha}: \mathcal{O}_{U_\alpha} \to \sh_{U_\alpha}/\mathcal J^{i+1}_{U_\alpha}$ via the composition of $j_\alpha^i$ with $\sigma^{i}$, that is, $\sigma^{i+1}_{U_\alpha}:= j_\alpha^{i}\circ\sigma^{i}_{U_\alpha}$. Furthermore, $\pi^{i+1}_{U_\alpha}=\pi^{i}_{U_\alpha}\circ\rho^i_{U_\alpha}$, where $\rho^i_{U_\alpha}:\frac{\sh_{U_\alpha}}{\mathcal{J}_{U_\alpha}}\oplus\frac{\mathcal{J}^{i}_{U_\alpha}}{\mathcal{J}^{i+1}_{U_\alpha}}\rightarrow\frac{\sh_{U_\alpha}}{\mathcal{J}_{U_\alpha}}$. These local morphisms satisfy the condition:
$$
 \pi^{i+1}_{U_\alpha}\circ \sigma^{i+1}_{U_\alpha}=\pi^{i}_{U_\alpha}\circ \rho^i_{U_\alpha}\circ j^{i}_{U_\alpha}\circ \sigma^{i}_{U_\alpha}= Id_{\mathcal O_{U_\alpha}} \, .
 $$
The obstruction to gluing these local morphisms into a global morphism $\sigma^{i+1}$ is represented by a 1-cocycle $\omega = ( \omega_{\alpha\beta} )$ in the \v{C}ech cohomology group  $H^1(X, \mathcal{H}om(\mathcal{O}_{\mathscr{X}_{red}}, ( \mathcal{J}^i_{\mathscr X} /  \mathcal{J}^{i+1}_{\mathscr X} )\ev ))$. Specifically, on intersections $U_{\alpha\beta} = U_\alpha \cap U_\beta$, the difference
$$\omega_{\alpha\beta} := \sigma^{i+1}_{U_\beta} - \sigma^{i+1}_{U_\alpha}: \mathcal O_{U_{\alpha \beta, red}}=\frac{\mathcal O_{U_{\alpha \beta}}}{\mathcal J_{U_{\alpha \beta}}} \longrightarrow \left( \frac{\mathcal J^i_{U_{\alpha \beta}}}{\mathcal J^{i+1}_{U_{\alpha \beta}}}\right)\ev
$$
is an even derivation of $\mathcal{O}_{\mathscr{X}_{\text{red}}}$ with values in the sheaf $\mathcal J^i_{\mathscr X} / \mathcal J^{i+1}_{\mathscr X} \cong \text{Sym}^i \mathcal{F}_{\mathscr{X}}$. Thus, the obstruction class $\omega$ resides in $H^1(X,(\mathcal{T}_{X}\otimes \text{Sym}^i \,\mathcal F_{\mathscr X})\ev)$.\\
Let us prove that $\omega_{\alpha\beta}$ is a derivation and $\omega= ( \omega_{\alpha\beta} )$ defines a class in $H^1(X,(\mathcal{T}_{X}\otimes \text{Sym}^i \,\mathcal F_{\mathscr X})\ev)$. Let $f, g \in\sh_{\mathscr X_{red}}(U_{\alpha \beta})$, then
\begin{align*}
\omega_{\alpha \beta}(fg)&= \sigma_{U_\beta}^{i+1}(fg)-\sigma_{U_\alpha}^{i+1}=\sigma_{U_\beta}^{i+1}(f)\, \sigma_{U_\beta}^{i+1}(g)-\sigma_{U_\alpha}^{i+1}(f)\, \sigma_{U_\alpha}^{i+1}(g)\\
                    &=\sigma_{U_\beta}^{i+1}(f)\, \left(\sigma_{U_\beta}^{i+1}(g)-\sigma_{U_\alpha}^{i+1}(g) \right)+ \left(\sigma_{U_\beta}^{i+1}(f)-\sigma_{U_\alpha}^{i+1}(f) \right) \, \sigma_{U_\alpha}^{i+1}(g)\\
                    &=\sigma_{U_\beta}^{i+1}(f) \, \omega_{\alpha \beta}(g)+ \omega_{\alpha \beta}(f) \, \sigma_{U_\alpha}^{i+1}(g).
\end{align*}
Since $\sigma^{i+1}_{U_\beta}(f)=f$ (mod $\mathcal J_{U_{\alpha \beta}}$) and $\mathcal J^{i}_{U_{\alpha \beta}}/\mathcal J^{i+1}_{U_{\alpha \beta}}$ is annihilated by $\mathcal J_{U_{\alpha \beta}}$ in this quotient, we have $\sigma_{U_\beta}^{i+1}(f) \, \omega_{\alpha \beta}(g)=f \; \omega_{\alpha \beta}(g)$. Thus:
$$\omega_{\alpha \beta}(fg)=f \, \omega_{ij}(g)+ (\omega_{ij}(f)) \, g.$$ 
This confirms that $\omega_{\alpha}$ is an even derivation on $\mathcal O_{U_{\alpha \beta, red}}$.\\
Next, let us check the cocycle condition. For any triple intersection $U_{\alpha \beta \gamma}:=U_\alpha \cap U_\beta \cap U_\gamma$: 
$$
    \left(\omega_{\alpha\beta}+\omega_{\beta \gamma}+\omega_{\gamma \alpha} \right)|_{U_{\alpha \beta \gamma}}=\left(\sigma_{U_\beta}^{i+1}-\sigma_{U_\alpha}^{i+1}+\sigma_{U_\gamma}^{i+1}-\sigma_{U_\beta}^{i+1}+\sigma_{U_\alpha}^{i+1}-\sigma_{U_\gamma}^{i+1} \right)|_{U_{\alpha \beta \gamma}}=0
    $$
Thus, $\omega=\{ \omega_{\alpha \beta} \}$ is a \v Cech 1-cocycle.\\
Finally, since $\mathcal{T}_{X} \otimes\text{Sym}^i\mathcal{F}_{\mathscr{X}}$ is a sheaf of modules over $\mathcal{O}_{\mathscr X}$, the obstruction class vanishes identically. This follows from the fact that Batchelor spaces possess a fine structure sheaf, which implies that any associated sheaf of modules is likewise fine. Consequently, the local morphisms can be glued to form a  global morphism $\sigma^{i+1}$ extending $\sigma^i$. After $q+1$ such steps, we obtain a global morphism $\sigma:\mathcal{O}_{\mathscr{X}_{\text{red}}} \to \mathcal{O}_{\mathscr{X}}$ such that $\pi \circ \sigma = id_{\mathcal{O}_{\mathscr{X}_{\text{red}}}}$.
\end{proof}

\begin{remark}
As noted in Example \ref{nonprojected}, the $C^\infty$-superscheme $\mathscr{Y} := \text{sSpec}(\mathfrak{R})$, where $\mathfrak{R} = C^\infty(\mathbb{R}^{1|2}) / (x^2 + \theta^1\theta^2)$, is a non-projected $C^\infty$-space. Consequently, by the contrapositive of Proposition \ref{projected}, it follows that $\mathscr{Y}$ is not a Batchelor space.
\end{remark}

The following result generalizes the classification theorem for smooth supermanifolds established in \cite{Batchelor} to the setting of Batchelor superspaces. The proof is formally analogous to the construction for supermanifolds provided by Manin in \cite[Chapter 4. \S 2. Theorem 2]{Manin}; however, care must be taken to ensure that the resulting global isomorphism is a morphism in the category of $C^\infty$-superspaces, rather than merely an isomorphism of smooth supermanifolds.

\begin{theorem}\label{superBatchelor}
Every Batchelor Space $\mathscr X=(X,\sh_{\mathscr X})$ is globally split. That is, there exists a (non-canonical) isomorphism of sheaves $C^\infty$-superspaces: $$\mathcal{O}_{\mathscr{X}} \cong \text{Gr} \, \mathcal{O}_{\mathscr{X}}  \, ,$$
where $\text{Gr} \, \mathcal{O}_{\mathscr{X}} = \bigoplus_{k \geq 0} \mathcal{J}_{\mathscr{X}}^k / \mathcal{J}_{\mathscr{X}}^{k+1}$ denotes the associated graded sheaf induced by the $\mathcal{J}_{\mathscr{X}}$-adic filtration.
\end{theorem}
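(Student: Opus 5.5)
We follow Manin's two-step strategy: first use Proposition \ref{projected} to fix a projection, then lift the odd generators globally. By Proposition \ref{projected}, $\mathscr X$ is projected, so we fix a morphism of sheaves of $\C$-superrings $\sigma:\sh_{\mathscr X_{red}}\to\sh_{\mathscr X}$ with $\pi\circ\sigma=\mathrm{Id}$. Through $\sigma$, the sheaf $\sh_{\mathscr X}$ becomes a sheaf of $\sh_{\mathscr X_{red}}$-superalgebras. In particular $\mathcal J_{\mathscr X}$ and $\mathcal J_{\mathscr X}^2$ are $\sh_{\mathscr X_{red}}$-modules, and so is the odd part of the surjection
$$
p:(\mathcal J_{\mathscr X})\od\longrightarrow \mathcal F_{\mathscr X}=\mathcal J_{\mathscr X}/\mathcal J_{\mathscr X}^2 .
$$
Since $\mathcal J^2_{\mathscr X}$ is even-generated in odd degree $\geq 3$, the relevant kernel is $(\mathcal J^3_{\mathscr X})\od$. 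We then seek an $\sh_{\mathscr X_{red}}$-linear section $\tau:\mathcal F_{\mathscr X}\to(\mathcal J_{\mathscr X})\od$ of $p$.

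Locally such sections exist. On an open $U_\alpha$ where $\sh_{\mathscr X}|_{U_\alpha}\cong\mathfrak C_{U_\alpha}[\theta^1,\ldots,\theta^q]^{\pm}$, we send the basis class $[\theta^i]$ to $\theta^i$. One caveat is that the local splitting need not be compatible with the fixed $\sigma$, so the local section is taken $\sh_{\mathscr X_{red}}$-linearly with respect to the $\sigma$-module structure. This is possible because $\mathcal F_{\mathscr X}|_{U_\alpha}$ is free on the $[\theta^i]$: we simply define $\tau_\alpha(\sum c_i[\theta^i])=\sum\sigma(c_i)\theta^i$. For two such local sections, the difference $\tau_\beta-\tau_\alpha$ takes values in $\ker p$, so it gives a \v{C}ech 1-cocycle in $\mathcal Hom_{\sh_{\mathscr X_{red}}}(\mathcal F_{\mathscr X},(\mathcal J^3_{\mathscr X})\od)$. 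Rather than killing this class by cohomology, it is cleaner to glue directly with a partition of unity $\{\rho_\alpha\}$ subordinate to $\{U_\alpha\}$, which exists because $\sh_{\mathscr X}$, and hence $\sh_{\mathscr X_{red}}$, is fine. We set $\tau=\sum_\alpha\sigma(\rho_\alpha)\tau_\alpha$. Then $p\circ\tau=\sum\rho_\alpha\,\mathrm{id}=\mathrm{id}$, because $p$ is $\sh_{\mathscr X_{red}}$-linear. This is exactly the vanishing of $H^1$ of a fine sheaf used in Proposition \ref{projected}.

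Next, by the universal property of the exterior algebra, $\tau$ extends to a morphism of sheaves of superalgebras
$$
\Psi:\textstyle\bigwedge_{\sh_{\mathscr X_{red}}}\mathcal F_{\mathscr X}\longrightarrow\sh_{\mathscr X},
$$
given by $\sigma$ in degree $0$ and by $\tau$ in degree $1$. Here supercommutativity of $\sh_{\mathscr X}$ gives $\tau(v)^2=0$ for odd $v$, which is what the universal property requires. Since $\mathscr X$ is locally split of rank $q$, the odd-dimension remark identifies $\mathrm{Gr}\,\sh_{\mathscr X}$ locally with $\mathfrak C_U[\theta^1,\ldots,\theta^q]^{\pm}$. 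Hence $\mathrm{Gr}\,\sh_{\mathscr X}\cong\bigwedge_{\sh_{\mathscr X_{red}}}\mathcal F_{\mathscr X}$, using the exterior (odd) interpretation of Remark \ref{EquiSymGr}. Isomorphy of $\Psi$ is local. On $U_\alpha$, $\Psi$ respects the $\mathcal J$-adic filtrations and induces the identity on associated graded pieces, since $\mathrm{gr}\,\sigma=\mathrm{id}$ and $\mathrm{gr}\,\tau=\mathrm{id}$. The filtrations are finite because $\mathcal J^{q+1}=0$, so a five-lemma induction on the $\mathcal J^k$ shows $\Psi$ is bijective. This also shows that $\Psi$ is a splitting in the sense of Definition \ref{spliting}.

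The main obstacle is the $\C$-structure: $\Psi$ must be a morphism of sheaves of $\C$-superrings, not merely of superalgebras. My plan is to observe that on each $U_\alpha$ the composite $\Psi$ followed by the local splitting is an algebra endomorphism of $\mathfrak C_U[\theta]^{\pm}$. This endomorphism is a $\C$-morphism on $\mathfrak C_U$, because $\sigma$ is one by Proposition \ref{projected}. It then remains to check that an algebra map out of $\mathfrak C[\theta]^{\pm}$ that restricts to a $\C$-morphism on $\mathfrak C$ automatically commutes with the operations $\Phi_h$ of \eqref{eq:taylorPhi1}. This holds because $\Phi_h$ is expressed polynomially through $\phi_{\partial^\bullet h}$ on the reduced parts and through products of the nilpotent parts. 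The delicate point is that $\sigma(f)$ need not equal $f$ in a given local splitting, so $\Psi(f)$ may carry a nilpotent part. In that case one must invoke the Taylor form of \eqref{eq:taylorPhi1} together with Faà di Bruno, as in the proof of Proposition \ref{prop:superstruct}, to see that composing with $\sigma$ is compatible with $\Phi_h$.
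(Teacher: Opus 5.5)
Your argument is correct. It follows the same Manin-type skeleton as the paper's proof: the projection $\sigma$ from Proposition~\ref{projected}, a global odd section $\tau$ of $\mathcal J_{\mathscr X}\to\mathcal F_{\mathscr X}$ obtained from fineness, extension by the universal property, and a filtration argument for bijectivity (your five-lemma induction is equivalent to the paper's unitriangularity). It differs from the paper at two points, and both matter.

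First, you make $\mathcal J_{\mathscr X}$ an $\sh_{\mathscr X_{red}}$-module through $\sigma$. You then glue the $\sigma$-linear local sections $\tau_\alpha(\sum c_i[\theta^i])=\sum\sigma(c_i)\theta^i$ as $\tau=\sum_\alpha\sigma(\rho_\alpha)\tau_\alpha$. The paper's Lemma~\ref{tau} instead asserts an $\sh_{\mathscr X_{red}}$-linear $\tau$ and kills a cocycle by $H^1=0$. There the linearity on each $U_\alpha$ is taken with respect to that chart's own identification, so no single module structure on $\mathcal J_{\mathscr X}$ is fixed. Your version is the precise form of that step, and it is exactly what the universal property in the main proof requires.

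Second, and more substantively, the two proofs differ in how they get $C^\infty$-compatibility. The paper writes $\Phi|_{U_\alpha}$ as the chart isomorphism composed with an endomorphism that fixes $\sh_{\mathscr X_{red}}(U_\alpha)$ pointwise, so that hypothesis (a) of Lemma~\ref{lem:naturality} applies. This tacitly assumes that the global $\sigma$ agrees with the chart's embedding of $\sh_{\mathscr X_{red}}(U_\alpha)$. You correctly note that in a chart one only has $\sigma(f)=f+(\text{nilpotent})$. You therefore replace Lemma~\ref{lem:naturality} by a stronger statement: any parity-preserving superalgebra map $E$ out of $\mathfrak C[\theta^1,\ldots,\theta^q]^{\pm}$ whose restriction to $\mathfrak C$ is a $C^\infty$-morphism is automatically a $C^\infty$-superring morphism.

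To make this step airtight you do not need Fa\`a di Bruno. Take any even $a_j=f_j+M_j$ and $n_j\in\mathfrak R\od^2$. From \eqref{eq:taylorPhi1} and the binomial theorem (the nilpotent parts commute) you get
\[
\Phi_h(a+n)=\sum_\alpha\tfrac{1}{\alpha!}\,\Phi_{\partial^\alpha h}(a)\,n^\alpha .
\]
Hence
\[
E(\Phi_h(F))=\sum_\alpha\tfrac{1}{\alpha!}\,\Phi_{\partial^\alpha h}(Ef)\,(EN)^\alpha=\Phi_h(EF),
\]
where you use that $E(N_j)\in\mathfrak R\od^2$ because $E$ preserves parity. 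Bijectivity then makes the inverse a $C^\infty$-morphism as well.

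The paper's route is shorter, but it leans on an unjustified coincidence between the global projection and the local splittings. Yours costs one extra, easily proved lemma and works for an arbitrary choice of $\sigma$.
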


\begin{lemma}\label{tau}
Let $\mathscr X=(X,\sh_{\mathscr X})$ be a Batchelor space of odd dimension $q$, with canonical superideal $\mathcal J_{\mathscr X}$ and fermionic sheaf $\mathcal F_{\mathscr X}=\mathcal J_{\mathscr X}/\mathcal J_{\mathscr X}^2$. Then there exists a global morphism of $\sh_{{\mathscr X}_{red}}$-modules
$$
\tau\colon \mathcal F_{\mathscr X} \longrightarrow \mathcal J_{\mathscr X}
$$
splitting the canonical projection $\mathcal J_{\mathscr X} \twoheadrightarrow \mathcal F_{\mathscr X}$, i.e. such that the composite $\mathcal F_{\mathscr X} \xrightarrow{\tau} \mathcal J_{\mathscr X} \twoheadrightarrow \mathcal F_{\mathscr X}$ is the identity.
\end{lemma}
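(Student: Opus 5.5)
The plan is a local-to-global argument: build splittings on the opens where $\mathscr X$ is split, then glue them with a partition of unity coming from fineness of $\sh_{\mathscr X}$. This is the same mechanism used in Proposition~\ref{projected}, but it is easier here because the objects are linear. We only need a morphism of modules, not a morphism of $\C$-superrings. In particular, no derivation-valued cocycle appears.

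\textbf{Step 1: the module structures.} By Proposition~\ref{projected}, there is a global section $\sigma\colon \sh_{\mathscr X_{red}}\to\sh_{\mathscr X}$ of $\pi$. Through $\sigma$, every $\sh_{\mathscr X}$-module becomes an $\sh_{\mathscr X_{red}}$-module. This applies in particular to $\mathcal J_{\mathscr X}$. The $\sh_{\mathscr X_{red}}$-structure on $\mathcal F_{\mathscr X}$ is the canonical one, since $\mathcal J_{\mathscr X}$ annihilates $\mathcal J_{\mathscr X}/\mathcal J_{\mathscr X}^2$. With these structures, the projection $p\colon\mathcal J_{\mathscr X}\to\mathcal F_{\mathscr X}$ is $\sh_{\mathscr X_{red}}$-linear.

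\textbf{Step 2: local splittings.} Choose an open cover $\{U_\alpha\}$ on which $\sh_{\mathscr X}|_{U_\alpha}\cong \mathfrak C_{U_\alpha}[\theta^1,\ldots,\theta^q]^{\pm}$. There $\mathcal F_{\mathscr X}|_{U_\alpha}$ is free on the classes $\bar\theta^i$. Define $\tau_\alpha(\sum c_i\bar\theta^i)=\sum \sigma(c_i)\theta^i$. This is $\sh_{\mathscr X_{red}}$-linear and satisfies $p\circ\tau_\alpha=\mathrm{id}$. The point of using $\sigma(c_i)$ rather than the local lift of $c_i$ given by the splitting of $U_\alpha$ is that linearity is then taken with respect to the single global module structure from Step~1. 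The two lifts differ only by elements of $\mathcal J_{\mathscr X}$, which map to zero in $\mathcal F_{\mathscr X}$, so $p\circ\tau_\alpha=\mathrm{id}$ is unaffected.

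\textbf{Step 3: gluing.} On overlaps, $\tau_\beta-\tau_\alpha$ takes values in $\ker p=\mathcal J_{\mathscr X}^2$. The differences therefore form a \v{C}ech $1$-cocycle in $\mathcal{H}om_{\sh_{\mathscr X_{red}}}(\mathcal F_{\mathscr X},\mathcal J^2_{\mathscr X})$. Since $\sh_{\mathscr X}$ is fine, this is a sheaf of modules over a fine sheaf of rings and hence is fine, so its $H^1$ vanishes. Concretely, choose a partition of unity $\{\rho_\alpha\}$ subordinate to the cover, with $\rho_\alpha$ taken in $\sh_{\mathscr X_{red}}$ (or as their images under $\sigma$). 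Set $\tau=\sum_\alpha \rho_\alpha\tau_\alpha$. This is locally finite, $\sh_{\mathscr X_{red}}$-linear, and satisfies $p\circ\tau=\sum\rho_\alpha\,\mathrm{id}=\mathrm{id}$.

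\textbf{Main obstacle.} I expect the hardest point to be checking that the partition of unity is compatible with the module structures. The fineness hypothesis is stated for $\sh_{\mathscr X}$, but we need cutoff functions acting $\sh_{\mathscr X_{red}}$-linearly. I would push a partition of unity of $\sh_{\mathscr X}$ down via $\pi$ to $\sh_{\mathscr X_{red}}$ and lift it back via $\sigma$. Then $p\circ(\sigma(\rho_\alpha)\tau_\alpha)=\rho_\alpha\,\mathrm{id}$ on $\mathcal F_{\mathscr X}$, because $\sigma(\rho_\alpha)\equiv\rho_\alpha \bmod \mathcal J_{\mathscr X}$.
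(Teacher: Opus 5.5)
Your proof is correct and follows essentially the same route as the paper: local sections $\tau_\alpha$ on a split cover, differences on overlaps with values in $\mathcal J_{\mathscr X}^2$, and fineness to glue. Your average $\tau=\sum_\alpha\rho_\alpha\tau_\alpha$ simply writes out the gluing step that the paper stops at, namely $H^1\big(X,\mathcal{H}om_{\sh_{\mathscr X_{red}}}(\mathcal F_{\mathscr X},\mathcal J_{\mathscr X}^2)\big)=0$.

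Your Step~1 is a genuine improvement on the paper's write-up. The paper extends each $\tau_\alpha$ linearly with respect to the $\sh_{\mathscr X_{red}}(U_\alpha)$-module structure coming from the local splitting of $U_\alpha$, and the local lifts of $\sh_{\mathscr X_{red}}$ into $\sh_{\mathscr X}$ underlying these structures agree on overlaps only modulo $\mathcal J_{\mathscr X}$. Fixing one global structure through the $\sigma$ of Proposition~\ref{projected} is therefore exactly what makes $\tau_\beta-\tau_\alpha$ honestly $\sh_{\mathscr X_{red}}$-linear, and it is also the linearity needed when $\sigma$ and $\tau$ are combined in the proof of Theorem~\ref{superBatchelor}.
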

\begin{proof}
Let $\{U_\alpha\}$ be an open cover on which $\sh_{\mathscr X}$ is split, with local odd generators $\theta^1_\alpha,\ldots,\theta^q_\alpha$ of $\mathcal F_{\mathscr X}(U_\alpha)$. Since $\mathcal J_{\mathscr X}(U_\alpha)=\bigoplus_{k\geq 1} \mathcal J_{\mathscr X}^k(U_\alpha)/\mathcal J_{\mathscr X}^{k+1}(U_\alpha)$ as an $\sh_{{\mathscr X}_{red}}(U_\alpha)$-module (Remark~\ref{EquiSymGr}), with $\mathcal F_{\mathscr X}(U_\alpha)$ identified with the weight-one summand, the assignment $\theta^i_\alpha\mapsto\theta^i_\alpha+\mathcal J_{\mathscr X}^2(U_{\alpha})$ extends $\sh_{{\mathscr X}_{red}}(U_\alpha)$-linearly to a local section $\tau_\alpha\colon \mathcal F_{\mathscr X}(U_\alpha)\to \mathcal J_{\mathscr X}(U_\alpha)$ of the canonical projection.\\
On overlaps $U_{\alpha\beta}$, both $\tau_\alpha$ and $\tau_\beta$ are sections of the same projection $\mathcal J_{\mathscr X}(U_{\alpha\beta})\twoheadrightarrow \mathcal F_{\mathscr X}(U_{\alpha\beta})$, so their difference
$$
\zeta_{\alpha\beta}:=\tau_\beta-\tau_\alpha
$$
is $\sh_{{\mathscr X}_{red}}$-linear and takes values in $\mathcal J_{\mathscr X}^2(U_{\alpha\beta})$. This defines a Čech $1$-cocycle
$$
\zeta=(\zeta_{\alpha\beta}) \in H^1\big(X,\, \mathcal{H}om_{\sh_{{\mathscr X}_{red}}}(\mathcal F_{\mathscr X},\mathcal J_{\mathscr X}^2)\big).
$$
Since $\mathscr X$ is a Batchelor space, $\sh_{{\mathscr X}_{red}}$ is fine. Furthermore, because $\mathcal{H}om_{\sh_{{\mathscr X}_{red}}}(\mathcal F_{\mathscr X},\mathcal J_{\mathscr X}^2)$ is a sheaf of $\sh_{{\mathscr X}_{red}}$-modules, it is fine as well, and therefore
$$
H^1\big(X,\,\mathcal{H}om_{\sh_{{\mathscr X}_{red}}}(\mathcal F_{\mathscr X},\mathcal J_{\mathscr X}^2)\big)=0.
$$
\end{proof}

\begin{lemma}\label{lem:naturality}
Let $\mathfrak C$ be a $\C$-ring, let $\{\theta^1,\ldots,\theta^q\}$ be a basis of a finitely generated free $\mathfrak C$-module $\xi$, and set $\mathfrak{R}:=\mathfrak{C}[\theta^1,\ldots,\theta^q]^{\pm}$, with the $\C$-ring structure $\Phi_h$ on $\mathfrak R\ev$ given by \eqref{eq:taylorPhi1}. Let $\Psi:\mathfrak R\rightarrow \mathfrak R$ be a $\mathfrak C$-superalgebra automorphism of the underlying (ungraded) exterior algebra $\bigwedge_{\mathfrak C}\xi$ such that:
\begin{enumerate}
\item[(a)] $\Psi|_{\mathfrak C}=\mathrm{id}_{\mathfrak C}$;
\item[(b)] $\Psi$ preserves the $\Z_2$-grading of $\mathfrak R$;
\item[(c)] $\Psi\bigl(\mathfrak J_{\mathfrak R}\bigr)= \mathfrak J_{\mathfrak R}$, where $\mathfrak J_{\mathfrak R}$ is the canonical superideal of $\mathfrak R$.
\end{enumerate}
Then $\Psi|_{\mathfrak R\ev}$ commutes with every $\Phi_h$, i.e. $\Psi$ is a morphism of $\C$-superrings (Definition~\ref{def:smorph}).
\end{lemma}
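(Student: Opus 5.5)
The plan is to compute both sides of the identity $\Psi(\Phi_h(F_1,\ldots,F_k))=\Phi_h(\Psi(F_1),\ldots,\Psi(F_k))$ directly from formula \eqref{eq:taylorPhi1}, using only that $\Psi$ is a $\mathfrak C$-algebra map fixing $\mathfrak C$. Take $h\in\C(\R^k)$ and $F_j=f_j+N_j\in\mathfrak R\ev$, with $f_j\in\mathfrak C$ and $N_j=\mathrm{nil}(F_j)\in\mathfrak R\od^2$.

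The first step is to determine the decomposition of $\Psi(F_j)$. By (a), $\Psi(F_j)=f_j+\Psi(N_j)$. By (b), $\Psi(N_j)$ is even. Since $N_j$ is a sum of products of odd elements and $\Psi$ is multiplicative and preserves parity, $\Psi(N_j)\in\mathfrak R\od^2$. Hypothesis (c) gives the same conclusion more directly: $\Psi(N_j)\in\mathfrak J_{\mathfrak R}\cap\mathfrak R\ev=\mathfrak R\od^2$. By the uniqueness of the decomposition $F=\overline F+\mathrm{nil}(F)$ stated before Proposition \ref{prop:superstruct}, we get
$$\overline{\Psi(F_j)}=f_j, \qquad \mathrm{nil}(\Psi(F_j))=\Psi(N_j).$$
This is the only point where (b) and (c) are really needed, and I expect it to be the main subtlety. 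One must make sure that $\Psi$ does not move the reduced part. This is guaranteed because $\mathfrak C$ is embedded as the degree-zero summand and is fixed pointwise, and because the nilpotent ideal is mapped into itself.

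The second step substitutes this into \eqref{eq:taylorPhi1}:
$$\Phi_h(\Psi(F_1),\ldots,\Psi(F_k))=\sum_{n\geq0}\frac1{n!}\sum_{j_1,\ldots,j_n}\phi_{\partial_{j_1}\cdots\partial_{j_n}h}(f_1,\ldots,f_k)\,\Psi(N_{j_1})\cdots\Psi(N_{j_n}).$$
On the other side, apply $\Psi$ to \eqref{eq:taylorPhi1}. The sum is finite by Proposition \ref{prop:superstruct}(i), and $\Psi$ is additive and multiplicative. The coefficients $\phi_{\partial^\bullet h}(f_1,\ldots,f_k)$ lie in $\mathfrak C$, so they are fixed by (a). Hence
$$\Psi\bigl(\Phi_h(F_1,\ldots,F_k)\bigr)=\sum_{n}\frac1{n!}\sum\phi_{\partial_{j_1}\cdots\partial_{j_n}h}(f)\,\Psi(N_{j_1})\cdots\Psi(N_{j_n}),$$
which matches the expression above term by term.

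The last step concludes. $\Psi$ is a grade-preserving superring automorphism, and its even restriction commutes with all $\Phi_h$, so it is a morphism of $\C$-superrings in the sense of Definition \ref{def:smorph}. The inverse $\Psi^{-1}$ satisfies the same hypotheses, so $\Psi$ is in fact an isomorphism of $\C$-superrings. Rational scalars such as $1/n!$ commute with $\Psi$ by $\R$-linearity, which follows from $\R\subset\mathfrak C$ and (a).
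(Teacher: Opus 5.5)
Your proposal is correct and follows the paper's proof essentially verbatim: use (a)--(c) to write $\Psi(F_j)=f_j+\Psi(N_j)$ with $\Psi(N_j)\in\mathfrak R\od^2$, then apply $\Psi$ to \eqref{eq:taylorPhi1} and compare term by term with $\Phi_h(\Psi(F_1),\ldots,\Psi(F_k))$. Your explicit use of the uniqueness of the decomposition $\mathfrak R\ev=\mathfrak C\oplus\mathfrak R\od^2$ and your observation that $\Psi^{-1}$ satisfies the same hypotheses are small but welcome refinements that the paper leaves implicit.
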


\begin{proof}
Let $F_1,\ldots,F_k\in \mathfrak R\ev$, and write $F_j=f_j+N_j$ with $f_j=\overline{F_j}\in\mathfrak C$ and $N_j=\mathrm{nil}(F_j)\in \mathfrak R\od^2$, as in the proof of Proposition~\ref{prop:superstruct}. By $(a)$ and $(c)$, and since $\Psi$ is a ring homomorphism,
$$
\Psi(F_j)=\Psi(f_j)+\Psi(N_j)=f_j+\Psi(N_j) \, ,
$$
with $\Psi(N_j)\in \mathfrak R\od^2$ again. Thus $\Psi$ fixes the superreduced of every even element and merely modifies its nilpotent part.

Applying $\Psi$ to \eqref{eq:taylorPhi1} and using that $\Psi$ is $\mathfrak C$-linear and multiplicative, together with (a), which gives $\Psi\bigl(\phi_{\partial_{j_1}\cdots\partial_{j_n}h}(f_1,\ldots,f_k)\bigr)=\phi_{\partial_{j_1}\cdots\partial_{j_n}h}(f_1,\ldots,f_k)$ since this coefficient already lies in $\mathfrak C$, we obtain
$$
\Psi\bigl(\Phi_h(F_1,\ldots,F_k)\bigr)=\sum_{n\geq 0}\frac{1}{n!}\sum_{j_1,\ldots,j_n=1}^k \phi_{\partial_{j_1}\cdots\partial_{j_n}h}(f_1,\ldots,f_k)\;\Psi(N_{j_1})\cdots\Psi(N_{j_n}) \, .
$$
On the other hand, applying \eqref{eq:taylorPhi1} directly to $\Psi(F_1),\ldots,\Psi(F_k)$ — which have the same superreduced parts
$f_1,\ldots,f_k$ and nilpotent parts $\Psi(N_1),\ldots,\Psi(N_k)$ — gives
$$
\Phi_h\bigl(\Psi(F_1),\ldots,\Psi(F_k)\bigr)=\sum_{n\geq 0}\frac{1}{n!}\sum_{j_1,\ldots,j_n=1}^k \phi_{\partial_{j_1}\cdots\partial_{j_n}h}(f_1,\ldots,f_k)\;\Psi(N_{j_1})\cdots\Psi(N_{j_n}) \, .
$$
The two right-hand sides coincide term by term. Hence $\Psi\bigl(\Phi_h(F_1,\ldots,F_k)\bigr)=\Phi_h\bigl(\Psi(F_1),\ldots,\Psi(F_k)\bigr)$ for every $h$, so $\Psi|_{\mathfrak R\ev}$ is a morphism of $\C$-rings. Since $\Psi$ is by hypothesis a grade-preserving superring automorphism of $\mathfrak R$, it follows that $\Psi$ is a morphism of $\C$-superrings.
\end{proof}
 Now we are ready to prove Theorem \ref{superBatchelor}
\begin{proof}(\textbf{of Theorem \ref{superBatchelor}})
By Proposition~\ref{projected}, there exists a global morphism $\sigma\colon \sh_{\mathscr X_{red}}\to \sh_{\mathscr X}$ of sheaves of $\C$-superrings, splitting $\pi$. By Lemma~\ref{tau}, there exists a global morphism $\tau\colon \mathcal F_{\mathscr X} \to \mathcal J_{\mathscr X}$ of $\sh_{X_{red}}$-modules, splitting the canonical projection $\mathcal J_{\mathscr X}\twoheadrightarrow\mathcal F_{\mathscr X}$.\\

Define a morphism of sheaves of $\sh_{X_{red}}$-modules
$$
\varphi\colon \sh_{\mathscr X_{red}}\oplus \mathcal F_{\mathscr X} \longrightarrow \sh_{\mathscr X}, \qquad \varphi(c\oplus f)=\sigma(c)+\tau(f).
$$
This morphism is defined globally, without reference to any local trivialization of $\mathcal F_{\mathscr X}$. By the universal property of the symmetric algebra, $\varphi$ extends uniquely to a morphism of sheaves of superrings
$$
\Phi\colon \mathrm{Sym}\,\mathcal F_{\mathscr X} \longrightarrow \sh_{\mathscr X}.
$$

\begin{itemize}[leftmargin=*]
\item \textbf{$\Phi$ is a local isomorphism.} Let $\{U_\alpha\}$ be the split cover of Lemma~\ref{tau}, with local odd generators $\theta^i_\alpha$. Write $\xi^i_\alpha$ for the corresponding generators of $\mathrm{Sym}\,\mathcal F_\mathscr{X}(U_\alpha)$. By construction, $\Phi(\xi^i_\alpha)=\tau(\theta^i_\alpha)=\theta^i_\alpha+\eta_\alpha(\theta^i_\alpha)$, with $\eta_\alpha(\theta^i_\alpha)\in \mathcal J_\mathscr{X}^2(U_\alpha)$. Hence $\Phi|_{U_\alpha}$ is the composite of the tautological isomorphism
$$
\sh_{\mathscr{X}_{red}}(U_\alpha)[\theta^1_\alpha,\ldots,\theta^q_\alpha]^{\pm} \xrightarrow{\ \sim\ } \sh_{\mathscr X}(U_\alpha)
$$
furnished by local splitness, with the endomorphism $\Psi_\alpha$ of $\sh_{\mathscr{X}_{red}}(U_\alpha)[\theta^1_\alpha,\ldots,\theta^q_\alpha]^{\pm}$ fixing $\sh_{\mathscr{X}_{red}}(U_\alpha)$. This endomorphism satisfies hypotheses (a)--(c) of Lemma~\ref{lem:naturality}: it fixes $\sh_{\mathscr{X}_{red}}(U_\alpha)$ pointwise by construction, it preserves parity since $\eta_\alpha(\theta^i_\alpha)$ is odd, and it preserves the canonical superideal since $\eta_\alpha(\theta^i_\alpha)\in \mathcal J_\mathscr{X}^2(U_\alpha)\subset \mathcal J_\mathscr{X}(U_\alpha)$ and $\Psi_\alpha$ maps generators of $\mathcal J_\mathscr{X}(U_\alpha)$ to elements of $\mathcal J_\mathscr{X}(U_\alpha)$. By Lemma~\ref{lem:naturality}, $\Psi_\alpha$ is therefore a morphism of $\C$-superrings — not merely an automorphism of the underlying ungraded exterior algebra. It is unitriangular with respect to the (finite, since $\mathcal J_\mathscr{X}^{q+1}=0$) weight filtration: it induces the identity on the associated graded $\mathrm{Gr}\,\sh_{\mathscr X}(U_\alpha)$, and is therefore invertible, with inverse given by the formal (necessarily terminating) series correcting successive weight terms; this inverse is again of the form $\theta^i_\alpha\mapsto \theta^i_\alpha+(\text{element of }\mathcal J_\mathscr{X}^2(U_\alpha))$, so it too satisfies (a)--(c) of Lemma~\ref{lem:naturality} and is thus also a morphism of $\C$-superrings. Hence $\Psi_\alpha$ is an automorphism of $\sh_{\mathscr{X}_{red}}(U_\alpha)[\theta^1_\alpha,\ldots,\theta^q_\alpha]^{\pm}$ in the category of $\C$-superrings. Consequently $\Phi|_{U_\alpha}$, being a composite of two isomorphisms of $\C$-superrings, is itself an isomorphism of $\C$-superrings.

\item \textbf{$\Phi$ is a global isomorphism.} Since being an isomorphism of sheaves is a local property, and $\Phi|_{U_\alpha}$ is an isomorphism of $\C$-superrings for every $\alpha$, it follows that $\Phi\colon \mathrm{Sym}\,\mathcal F_\mathscr{X} \to \sh_{\mathscr X}$ is a global isomorphism of sheaves of $\C$-superrings.\\
\item \textbf{Independence of choices.} The morphism $\Phi$ depends on the pair $(\sigma,\tau)$, each of which is unique only up to a coboundary correction (Proposition~\ref{projected} and Lemma~\ref{tau}). Any two admissible choices $\sigma,\sigma'$ differ by a global section of $\mathcal{H}om_{\sh_{\mathscr{X}_{red}}}(\sh_{\mathscr{X}_{red}},\mathcal J_\mathscr{X})$, and any two admissible $\tau,\tau'$ differ by a global section of $\mathcal{H}om_{\sh_{\mathscr{X}_{red}}}(\mathcal F_\mathscr{X},\mathcal J_\mathscr{X}^2)$; in either case the resulting morphisms $\varphi,\varphi'$ agree modulo a correction landing in $\mathcal J_\mathscr{X}$, hence induce the same map on each graded piece $\mathrm{Gr}^{(k)}\sh_{\mathscr X}$. Thus any two such $\Phi,\Phi'$ differ by post-composition with an automorphism of $\sh_{\mathscr X}$ covering the identity on $\sh_{\mathscr{}_{red}}$ and inducing the identity on $\mathrm{Gr}\,\sh_{\mathscr X}$ — precisely the non-canonicity already inherent to the notion of a splitting (Definition \ref{spliting}). In particular, the existence and isomorphism class of $(\mathrm{id}_\mathscr{},\Phi)$ as a splitting of $\mathscr X$ is independent of the local generators $\theta^i_\alpha$ chosen along the way, since these enter only in the intermediate verification that $\Phi$ is a local isomorphism, and not in the definition of $\Phi$ itself.\\

Finally, since $\mathrm{Sym}\,\mathcal F_\mathscr{}\cong \mathrm{Gr}\,\sh_{\mathscr X}$ (Remark~\ref{EquiSymGr}), the pair $(\mathrm{id}_\mathscr{},\Phi)$ constitutes a global isomorphism of $\C$-superspaces
$$
(\mathrm{id}_\mathscr{},\Phi)\colon (X,\sh_{\mathscr X}) \xrightarrow{\ \sim\ } (X,\mathrm{Gr}\,\sh_{\mathscr X}).
$$
Thus $\mathscr X$ is globally split.
\end{itemize}
\end{proof}

\begin{remark}
Lemma~\ref{lem:naturality} shows that compatibility of a substitution $\theta^i\mapsto \theta^i+\eta^i$ (with $\eta^i\in(\mathfrak J_{\mathfrak R}^2)\od$) with the $\C$-superring structure is automatic, and depends only on properties (a)--(c) — never on how the nilpotent parts $N_j$ happen to be expressed in the chosen odd generators $\theta^1,\ldots,\theta^q$. This is the precise sense in which the local isomorphisms constructed in the proof of Theorem~\ref{superBatchelor} below are compatible with the $\C$-superring structure independently of the local model chosen.
\end{remark}

In summary, the category of Batchelor spaces serves as an intermediate framework, situated between the category of smooth supermanifolds and the category of projected $C^\infty$-superspaces. As demonstrated in Examples \ref{supchemes}, \ref{difsuperspace}, and \ref{nonprojected} , we provide a systematic construction for $C^\infty$-superspaces that delineate the boundaries of these categories:

\begin{itemize}[leftmargin=*]
\item \textbf{A $C^\infty$-superspace that is a smooth supermanifold}: This is achieved by taking the $C^\infty$-ring of functions on a smooth manifold and superizing its structure sheaf with $q$ odd variables. For instance, the Euclidean superspace $\mathbb{R}^{1|2}$ is the superization of $\text{Spec}(C^\infty(\mathbb{R}))$ via two odd generators.

\item \textbf{A $C^\infty$-superspace that is a Batchelor space but not a supermanifold}:  One may consider a $C^\infty$-ring $\mathfrak{C}$ that does not arise from a smooth manifold but possesses a fine structure sheaf $\mathcal{O}_{\mathfrak{C}}$. Superizing this sheaf with $q$ odd variables yields the desired space. For example, if $\mathfrak{C}$ is a $C^\infty$-ring with corners \cite{J}, its spectrum $\text{Spec}(\mathfrak{C})$ is an affine $C^\infty$-scheme representing a manifold with corners. Since it satisfies the hypotheses of \cite[Theorem 4.40]{J}, ensuring that its structure sheaf is fine despite not being a smooth manifold. Thus, the superization of $\text{Spec}(\mathfrak{C})$ is a Batchelor space that is not a supermanifold.

\item \textbf{A $C^\infty$-superspace that is fine but not locally split}: Consider a non-split $C^\infty$-superring $\mathfrak{R}$. Its superspectrum $\text{sSpec}(\mathfrak{R})$ possesses a fine structure sheaf (as is characteristic of affine $C^\infty$-superschemes) yet it cannot be locally split. A local splitting would contradict the global splitness result established in Theorem \ref{superBatchelor}.

\end{itemize}

\subsection{Characterization of splittings}
Let $\mathfrak{R}$ be a split $C^\infty$-superring. The existence of an isomorphism between $\mathfrak{R}$ and its associated graded $C^\infty$-superring $\text{Gr}\, \mathfrak{R}$ endows $\mathfrak{R}$ with a natural $\mathbb{Z}_{\geq 0}$-grading. This grading may be equivalently characterized by an even superderivation that is adapted to the $\mathfrak{J}_{\mathfrak{R}}$-adic filtration \cite{Koszul}.\\

\begin{definition}\label{def:Eulervector}
Let $\mathfrak R\cong \bigwedge_{\mathfrak C} \xi$  be a split $C^\infty$-superring, where $\xi$ is a finitely generated free $\mathfrak{C}$-module. Given a choice of basis $\{\theta^1, \dots, \theta^q\}$ for $\xi$, such that $\mathfrak{R} \cong \bigwedge_{\mathfrak{C}}(\theta^1, \dots, \theta^q)=\mathfrak{C}[\theta^1, \dots, \theta^q]^\pm$, we define the Euler vector field associated with the basis $\{\theta^i\}$ as the even superderivation:
$$E_\theta = \sum_{i=1}^q \theta^i \frac{\partial}{\partial \theta^i} \, .$$
\end{definition}
\begin{remark}
It is worth emphasizing that the Euler vector field $E_\theta$ is, in an essential sense, purely $\Z_2$-graded-algebraic in nature and does not depend on the $\C$-structure of $\mathfrak R$. Indeed, each odd derivation $\frac{\partial}{\partial \theta^i}$ is characterized entirely by the graded Leibniz rule together with the assignments $\frac{\partial}{\partial \theta^i}(\theta^j)=\delta_i^j$ and $\frac{\partial}{\partial \theta^i}(c)=0$ for $c\in\mathfrak C$. At no point does this definition invoke the $n$-ary operations $\phi_f$ that encode the $\C$-ring structure on $\mathfrak C$ 
and hence does depend on the $\C$-structure. Since $E_\theta$ is built exclusively from the odd derivations $\frac{\partial}{\partial \theta^i}$ each of which annihilates $\mathfrak C$ and acts on the odd generators by the Kronecker pairing alone the operator $E_\theta$ is already well defined at the level of the underlying $\Z_2$-graded superring $\mathfrak R\ev\oplus \mathfrak R\od$, before any reference to the $n$-ary operations $\{\phi_f\}$ on $\mathfrak R\ev$.
\end{remark}

\begin{definition}
An element $a \in \mathfrak{R} \cong \bigwedge_{\mathfrak{C}}(\theta^1, \dots, \theta^q)$ is said to be of weight $k$ if it satisfies the eigenvalue equation $E_\theta(a) = k \, a$. That is, $a$ is an eigenvector of the Euler vector field $E_\theta$ with corresponding eigenvalue $k$.
\end{definition}

Consequently, the canonical $\mathbb{Z}_{\geq 0}$-grading of a split $C^\infty$-superring is equivalently characterized by the weight-space decomposition induced by the action of the Euler vector field.

\begin{proposition}\label{eulergrad}
The homogeneous elements of degree $k$ in a split $C^\infty$-superring $\mathfrak{R}$ are eigenvectors of the Euler vector field with weight $k$.
\end{proposition}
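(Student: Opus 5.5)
The strategy is to reduce the statement to a computation on monomials. Fix the splitting $\mathfrak R\cong \mathfrak C[\theta^1,\ldots,\theta^q]^{\pm}$ together with its basis $\{\theta^i\}$. The homogeneous elements of degree $k$ are then the $\mathfrak C$-linear combinations $\sum_{|I|=k} c_I\theta^I$, where $\theta^I=\theta^{i_1}\cdots\theta^{i_k}$ with $i_1<\cdots<i_k$. Under the identification of Remark~\ref{EquiSymGr}, these form the summand corresponding to $\mathrm{Gr}^{(k)}\mathfrak R=\mathfrak J^k/\mathfrak J^{k+1}$. Because $E_\theta$ is $\R$-linear, it is enough to check two things: $E_\theta(c)=0$ for $c\in\mathfrak C$, and $E_\theta(c\,\theta^I)=k\,c\,\theta^I$ whenever $|I|=k$.

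The first step is to establish that $E_\theta$ is an even derivation that is $\mathfrak C$-linear. Each $\theta^i$ is odd and each $\frac{\partial}{\partial\theta^i}$ is odd, so every summand $\theta^i\frac{\partial}{\partial\theta^i}$ is even. Under the module action on $sDer_{\mathfrak R}$ it satisfies the ungraded Leibniz rule $E(rs)=E(r)s+rE(s)$. Since $\frac{\partial}{\partial\theta^i}(c)=0$ for every $c\in\mathfrak C$, the Leibniz rule gives $E_\theta(c\,a)=c\,E_\theta(a)$. This reduces the problem to evaluating $E_\theta$ on $\theta^I$.

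Next, I compute on generators: $E_\theta(\theta^j)=\sum_i\theta^i\delta_i^j=\theta^j$. Then I argue by induction on $k=|I|$, using the Leibniz rule for the even derivation $E_\theta$:
$$E_\theta(\theta^{i_1}\cdots\theta^{i_k})=E_\theta(\theta^{i_1})\,\theta^{i_2}\cdots\theta^{i_k}+\theta^{i_1}E_\theta(\theta^{i_2}\cdots\theta^{i_k})=\theta^I+(k-1)\theta^I=k\,\theta^I.$$
No sign appears here because $E_\theta$ is even. By linearity, every homogeneous element of degree $k$ has weight $k$. As a byproduct, the decomposition $\mathfrak R=\bigoplus_k\mathfrak R^{(k)}$ is the eigenspace decomposition of $E_\theta$: the eigenvalues $0,\ldots,q$ are distinct and $\mathfrak R$ is the sum of the corresponding eigenspaces, so these spaces are exactly the weight spaces. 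If the converse inclusion is desired, it follows from this observation.

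The only real obstacle is bookkeeping about where the grading lives. The statement has to be read in the split model fixed by the chosen isomorphism and basis, because the degree-$k$ part of $\mathfrak R$ is not canonical; only $\mathrm{Gr}^{(k)}$ is. The check that $\frac{\partial}{\partial\theta^i}$ is a well-defined odd derivation on $\mathfrak C[\theta]^{\pm}$ with the stated values is taken from the earlier example on $sDer_{\mathfrak R}$. No $\C$-structure is involved, consistent with the remark after Definition~\ref{def:Eulervector}.
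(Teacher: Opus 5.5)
Your proof is correct, but it is organized differently from the paper's.

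The paper reduces to a single monomial $a=\theta^{i_1}\cdots\theta^{i_k}$ and expands $E_\theta(a)$ term by term. It computes $\frac{\partial}{\partial\theta^{i_r}}(a)=(-1)^{r-1}\theta^{i_1}\cdots\widehat{\theta^{i_r}}\cdots\theta^{i_k}$. It then notes that multiplying on the left by $\theta^{i_r}$ and moving that factor back to position $r$ produces the same sign $(-1)^{r-1}$, so each of the $k$ terms returns $a$.

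You instead first establish three structural facts about $E_\theta$. First, it is an even derivation obeying the unsigned Leibniz rule. This is justified: for $D=\frac{\partial}{\partial\theta^i}$ and homogeneous $r$, supercommutativity gives $\theta^i D(rs)=\theta^i D(r)s+(-1)^{|r|}\theta^i r D(s)=\theta^i D(r)s+r\,\theta^i D(s)$. Second, it kills $\mathfrak C$ and is therefore $\mathfrak C$-linear. Third, it fixes each $\theta^j$. You then induct on $k$, so no signs ever appear.

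Your route buys three things.
\begin{itemize}
\item It makes explicit the $\mathfrak C$-linearity that the paper tacitly uses when it reduces ``by linearity'' to coefficient-free monomials.
\item It shows the conclusion holds for any even derivation that vanishes on $\mathfrak C$ and fixes the odd generators. That is the form in which the paper later reuses the result for other coordinate systems $E_\eta$.
\item Your closing observation gives the converse inclusion $\mathrm{Ker}(E_\theta-k\,\mathrm{id})=\mathfrak R^{(k)}$. It rests on $\mathfrak R=\bigoplus_k\mathfrak R^{(k)}$, the eigenvalues $0,\dots,q$ being distinct, and nonzero integers being invertible in the $\R$-algebra $\mathfrak R$. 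The paper later invokes this converse when it says the degree-$k$ elements are ``precisely'' the weight-$k$ eigenvectors, but its proof of this proposition only establishes one inclusion.
\end{itemize}

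The paper's computation, in turn, is a self-contained explicit formula check. It never needs the fact that $\mathfrak R$-multiples of derivations are again derivations of the expected parity.
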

\begin{proof}
Let $a \in \mathfrak{R} \cong \bigwedge_{\mathfrak{C}}(\theta^1, \dots, \theta^q)$ be a homogeneous element of degree $k$. By definition, $a$ can be expressed as a $\mathfrak{C}$-linear combination of monomials of the form $\theta^{i_1} \dots \theta^{i_k}$. By the linearity of the Euler vector field $E_\theta$, it suffices to consider a single monomial $a = \theta^{i_1} \dots \theta^{i_k}$ for a strictly increasing subset of indices $\{i_1, \dots, i_k\} \subseteq \{1, \dots, q\}$. Recall that for any $i_r \in \{i_1, \dots, i_k\}$, the monomial can be rewritten by commuting $\theta^{i_r}$ to the first position:$$a = (-1)^{r-1} \theta^{i_r} \theta^{i_1} \dots \widehat{\theta^{i_r}} \dots \theta^{i_k} ,$$
where the notation $\widehat{\theta^{i_r}}$ indicates that the factor $\theta^{i_r}$ is omitted. Applying the superderivation $\frac{\partial}{\partial \theta^j}$ to $a$, we observe that the result is non-zero if and only if $j \in \{i_1, \dots, i_k\}$. Specifically, for $j = i_r$, we have:$$\frac{\partial}{\partial \theta^{i_r}} (\theta^{i_1} \dots \theta^{i_k}) = (-1)^{r-1} \theta^{i_1} \dots \widehat{\theta^{i_r}} \dots \theta^{i_k} .$$Applying the Euler vector field $E_\theta = \sum_{j=1}^q \theta^j \frac{\partial}{\partial \theta^j}$ to $a$, we obtain:
\begin{align*}
E_\theta (a) &= \sum_{j=1}^q \theta^j \frac{\partial}{\partial \theta^j} (\theta^{i_1} \dots \theta^{i_k})= \sum_{r=1}^k \theta^{i_r} \left( \frac{\partial}{\partial \theta^{i_r}} (\theta^{i_1} \dots \theta^{i_k}) \right)\\
&= \sum_{r=1}^k \theta^{i_r} \left( (-1)^{r-1} \theta^{i_1} \dots \widehat{\theta^{i_r}} \dots \theta^{i_k} \right) 
= \sum_{r=1}^k \theta^{i_1} \dots \theta^{i_r} \dots \theta^{i_k} \\
&= \sum_{r=1}^k a = k\, a.
\end{align*}
Thus, $E_\theta(a) = k\, a$, confirming that $a$ is an eigenvector with weight $k$.
\end{proof}

Note that the elements of weight 1 constitute the generating set of $\mathfrak{R} \cong \bigwedge_{\mathfrak{C}} (\theta^1, \dots, \theta^q)$ as an $\overline{\mathfrak{R}}$-algebra. Consequently, any element of weight $k$ is given by a $\mathfrak{C}$-linear combination of $k$-fold products of elements from the weight-1 subspace $\mathfrak R\od$. As a consequence, we have the following
\begin{proposition}
There exists a bijective correspondence between Euler vector fields and splittings of a $\C$-superring $\mathfrak R$.
\end{proposition}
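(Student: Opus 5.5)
We first make both sides of the correspondence precise. A \emph{splitting} of $\mathfrak R$ is taken in the sense of Definition~\ref{spliting}: an isomorphism of $\C$-superrings $\sigma\colon \mathrm{Gr}\,\mathfrak R\to\mathfrak R$ that induces the identity on $\mathrm{Gr}\,\mathfrak R$. An \emph{Euler vector field} is an even superderivation $E$ of $\mathfrak R$ with three properties. First, $\mathfrak R=\bigoplus_{k\ge 0}\mathfrak R_k$, where $\mathfrak R_k=\ker(E-k)$. Second, $E$ is adapted to the filtration, meaning $\mathfrak J^k_{\mathfrak R}=\bigoplus_{j\ge k}\mathfrak R_j$. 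Third, $E$ induces multiplication by $k$ on $\mathfrak J^k_{\mathfrak R}/\mathfrak J^{k+1}_{\mathfrak R}$. By Definition~\ref{def:Eulervector} and Proposition~\ref{eulergrad}, every $E_\theta$ has these properties. We then construct maps in both directions and check that they are mutually inverse.

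\textbf{From splittings to Euler fields.} Start from a splitting $\sigma$. On $\mathrm{Gr}\,\mathfrak R$ define the degree operator $E_{\mathrm{Gr}}$, which acts by $k$ on $\mathrm{Gr}^{(k)}$. Since $\mathrm{Gr}\,\mathfrak R\cong\overline{\mathfrak R}[\theta^1,\dots,\theta^q]^{\pm}$ (Remark~\ref{EquiSymGr}), $E_{\mathrm{Gr}}$ coincides with $E_\theta$ for any local basis. It is a derivation because the grading is multiplicative, and it is even because degree $k$ has parity $k$ mod $2$. We then set $E_\sigma:=\sigma\circ E_{\mathrm{Gr}}\circ\sigma^{-1}$. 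Conjugating by the $\C$-superring isomorphism $\sigma$ preserves the derivation property and evenness. The eigenspaces of $E_\sigma$ are $\sigma(\mathrm{Gr}^{(k)})$. Because $\sigma$ lifts the identity on graded pieces, $\sigma(\bigoplus_{j\ge k}\mathrm{Gr}^{(j)})=\mathfrak J^k_{\mathfrak R}$, which gives adaptedness and the third property. Moreover, if $\sigma$ is obtained from a basis $\theta^i$, then $E_\sigma=E_\theta$.

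\textbf{From Euler fields to splittings.} Start from an Euler field $E$ with eigenspaces $\mathfrak R_k$. Because $E$ is a derivation, $\mathfrak R_j\mathfrak R_k\subseteq\mathfrak R_{j+k}$. By adaptedness, the projection $\mathfrak J^k_{\mathfrak R}\to\mathfrak J^k_{\mathfrak R}/\mathfrak J^{k+1}_{\mathfrak R}$ restricts to an isomorphism $p_k\colon\mathfrak R_k\xrightarrow{\sim}\mathrm{Gr}^{(k)}$. We define $\sigma_E:=\bigoplus_k p_k^{-1}$. This is a graded ring isomorphism $\mathrm{Gr}\,\mathfrak R\to\mathfrak R$ lifting the canonical maps, with multiplicativity following from $\mathfrak R_j\mathfrak R_k\subseteq\mathfrak R_{j+k}$. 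The construction is manifestly inverse to the previous one: $E_{\sigma_E}=E$ because both act by $k$ on $\mathfrak R_k$, and $\sigma_{E_\sigma}=\sigma$ because both send $\mathrm{Gr}^{(k)}$ onto the $k$-eigenspace compatibly with $p_k$.

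\textbf{Main obstacle.} The difficult step is showing that $\sigma_E$ is a morphism of $\C$-superrings and not merely of superrings; this is the only place the $\C$-structure enters. We plan to reduce to Lemma~\ref{lem:naturality}, as in the proof of Theorem~\ref{superBatchelor}. Choose an $\overline{\mathfrak R}$-basis of $\mathrm{Gr}^{(1)}$ and let $\theta^i:=p_1^{-1}(\text{basis})\in\mathfrak R_1$. These $\theta^i$ generate $\mathfrak R$ over $\mathfrak R_0$, since the weight-$1$ elements generate, as noted just before the statement. It follows that $\mathfrak R=\mathfrak R_0[\theta^1,\dots,\theta^q]^{\pm}$ and $E=E_\theta$.

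The remaining point is that $\mathfrak R_0\cong\overline{\mathfrak R}$ as $\C$-rings, i.e.\ that $\mathfrak R_0$ is closed under the operations $\Phi_h$. This follows from the Taylor formula \eqref{eq:taylorPhi1}. Fix any reference splitting $\mathfrak R\cong\mathfrak C[\eta]^{\pm}$, which exists by hypothesis. Then $\mathfrak R_0$ is the image of $\mathfrak C$ under an automorphism of the form $c\mapsto c+(\text{nilpotent})$ that is unitriangular in $\eta$-weight. Lemma~\ref{lem:naturality}, applied to the change of variables relating the $\eta$- and $\theta$-presentations, shows that this automorphism respects $\Phi_h$.

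If that identification of $\mathfrak R_0$ proves delicate, because such an automorphism need not fix $\mathfrak C$, the fallback is to restrict the bijection to splittings and Euler fields with a fixed reduced section $\mathfrak R_0=\mathfrak C$. This matches how the splitting is actually produced in Theorem~\ref{superBatchelor}, namely from a fixed $\sigma$ together with a $\tau$.
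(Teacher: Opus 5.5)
Your superring-level constructions, $E_\sigma=\sigma\circ E_{\mathrm{Gr}}\circ\sigma^{-1}$ and $\sigma_E=\bigoplus_k p_k^{-1}$, are sound. They give a cleaner, coordinate-free version of the paper's route, which passes through systems of odd coordinates $\eta^i=\theta^i+\zeta^i$. One small point: with the paper's definition (an even derivation adapted to $\{\mathfrak J^k\}$), your eigenspace decomposition is not an assumption. It has to be derived, either from Koszul's Lemma~1.1(i) or from the fact that $E-k\,\mathrm{id}$ is invertible on $\mathfrak J^{k+1}$ because $\mathfrak J$ is nilpotent.

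The genuine gap is the step you yourself single out: proving that $\mathfrak R_0=\mathrm{Ker}\,E$ is closed under the $\Phi_h$, so that $\sigma_E$ is a morphism of $\C$-superrings. Lemma~\ref{lem:naturality} cannot do this. Its hypothesis (a) requires $\Psi|_{\mathfrak C}=\mathrm{id}_{\mathfrak C}$, and this fails precisely when $\mathfrak R_0\neq\mathfrak C$, which is the only case where something must be shown. Moreover, asking whether your automorphism carrying $\mathfrak C$ onto $\mathfrak R_0$ respects $\Phi_h$ is just a restatement of the claim.

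In fact the claim is false for a general even adapted derivation. Let $\delta$ be an $\R$-linear derivation of $\mathfrak C$ that is not a $\C$-derivation. On $\mathfrak R=\mathfrak C[\eta^1,\eta^2]^{\pm}$, define $\psi(c)=c+\delta(c)\eta^1\eta^2$ and $\psi(\eta^i)=\eta^i$. This is a superring automorphism inducing the identity on $\mathrm{Gr}\,\mathfrak R$, so $E=\psi\circ E_\eta\circ\psi^{-1}$ is even and satisfies all three of your properties. But $\mathrm{Ker}\,E=\psi(\mathfrak C)$ is not closed under $\Phi_h$: we have $\Phi_h(\psi(c))=\phi_h(c)+\phi_{h'}(c)\delta(c)\eta^1\eta^2$, while $\psi(\phi_h(c))=\phi_h(c)+\delta(\phi_h(c))\eta^1\eta^2$. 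Hence $\sigma_E$ is only a superring isomorphism, and $E$ corresponds to no splitting.

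Such a $\delta$ does not exist on $\C(\R^p)$, where every derivation is a vector field, but it does on more exotic $\C$-rings. For example, on an ultrapower ${}^*\R$ let $a$ be the class of $(n)_n$. Then $a$ and $\phi_{\exp}(a)$ are algebraically independent over $\R$, so one may take $\delta(a)=0$ and $\delta(\phi_{\exp}(a))=1$.

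There are two honest repairs. The first is your fallback, which proves a restricted statement: consider only Euler fields with $E(\mathfrak C)=0$ and splittings whose weight-zero part is $\mathfrak C$. Then $\sigma_E$ differs from the tautological splitting by an automorphism fixing $\mathfrak C$, and Lemma~\ref{lem:naturality} applies verbatim. This is in effect what the paper's proof establishes. Its odd coordinates always have $\mathfrak R^\eta_{(0)}=\mathfrak C$, and its surjectivity step uses $\mathfrak R^\eta_{(k)}\subseteq\mathrm{Ker}(E-k\,\mathrm{id})$, which already presupposes $E(\mathfrak C)=0$.

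The second repair keeps all splittings but adds to the definition of Euler vector field the chain rule $E(\Phi_h(F_1,\ldots,F_k))=\sum_i\Phi_{\partial_i h}(F_1,\ldots,F_k)\,E(F_i)$ on $\mathfrak R\ev$.
\begin{itemize}
\item A direct check from \eqref{eq:taylorPhi1} shows that $E_{\mathrm{Gr}}$ satisfies this chain rule, hence so does every $E_\sigma$.
\item Conversely, the chain rule makes $\mathrm{Ker}\,E$ a $\C$-subring complementary to the nilpotent ideal $\mathfrak J$.
\item Then $\sigma_E$ is a $\C$-morphism, because in any $\C$-ring one has $\Phi_h(a+n)=\sum_{\alpha}\frac{1}{\alpha!}\Phi_{\partial^\alpha h}(a)\,n^\alpha$ whenever the $n_j$ lie in a nilpotent ideal. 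This follows from Taylor's formula with smooth remainder.
\end{itemize}
It is this expansion, not Lemma~\ref{lem:naturality}, that handles a weight-zero part different from $\mathfrak C$.
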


\begin{proof}
 Let $\mathfrak R\cong \mathfrak C[\theta^1,\ldots,\theta^q]^{\pm}=\bigwedge_{\mathfrak C}\xi$ be a splitting of $\mathfrak R$. By a \emph{system of odd coordinates} for $\mathfrak R$ we mean a tuple $\eta=(\eta^1,\ldots,\eta^q)$ of elements of $\mathfrak J_{\mathfrak R}$ whose images $\overline{\eta^1},\ldots,\overline{\eta^q}$ under $\mathfrak J_{\mathfrak R}\twoheadrightarrow \mathcal{F}_{\mathfrak R}=\mathfrak J_{\mathfrak R}/\mathfrak J_{\mathfrak R}^2$ form a $\overline{\mathfrak R}$-basis of $\mathcal{F}_{\mathfrak R}$; equivalently, $\eta^i=\theta^i+\zeta^i$ for some $\zeta^i\in\mathfrak J^2$, after identifying $\overline{\mathfrak R}\cong\mathfrak C$. The assignement
$$
\theta^i\longmapsto \eta^i=\theta^i+\zeta^i
$$
extends to an endomorphism of $\mathfrak R$ fixing $\mathfrak C$; since it is unitriangular with respect to the (finite) weight filtration $\{\mathfrak J^k\}$ --- inducing the identity on $\mathrm{Gr}\,\mathfrak R$ --- it is invertible, with inverse given by the terminating series correcting successive weight terms, exactly as in the proof of Theorem~\ref{superBatchelor}. Hence every system of odd coordinates $\eta$ yields an alternative presentation
$$
\mathfrak R\;\cong\;\mathfrak C[\eta^1,\ldots,\eta^q]^{\pm},
$$
and we may define, exactly as in Definition~\ref{def:Eulervector}, the associated Euler vector field
$$
E_\eta:=\sum_{i=1}^q \eta^i\,\frac{\partial}{\partial \eta^i}.
$$
By Proposition~\ref{eulergrad} applied to \emph{this} presentation, the homogeneous elements of $\eta$-degree $k$ are precisely the eigenvectors of $E_\eta$ of weight $k$. In particular, $E_\eta$ is adapted to the $\mathfrak J$-adic filtration, since the $\eta$-degree filtration coincides with $\{\mathfrak J^k\}$ (both are generated, as $\mathfrak C$-submodules, by products of $k$ elements of $\mathfrak F_{\mathfrak R}$). Thus $E_\eta$ is an Euler vector field on $\mathfrak R$, for every choice of odd coordinates $\eta$.

\begin{itemize}[leftmargin=*]
\item\textbf{From coordinates to splittings, and back.} Given $\eta$, define $\lambda_\eta:\mathfrak R\to \mathrm{Gr}\,\mathfrak R$ on the $\eta$-degree-$k$ summand $\mathfrak R^\eta_{(k)}$ (which by Proposition~\ref{eulergrad} equals $\mathrm{Ker}(E_\eta-k\,\mathrm{id})$) as the composite $\mathfrak R^\eta_{(k)}\hookrightarrow \mathfrak J^k\twoheadrightarrow \mathfrak J^k/\mathfrak J^{k+1}$; as in the discussion following Remark~\ref{EquiSymGr}, this is bijective on each summand, hence $\lambda_\eta$ is an isomorphism of $\C$-superrings, i.e.\ a splitting of $\mathfrak R$. Conversely, any splitting $\lambda:\mathfrak R\xrightarrow{\sim}\mathrm{Gr}\,\mathfrak R$ determines a $\mathfrak C$-linear section $\tau_\lambda:=\lambda^{-1}|_{\mathfrak J/\mathfrak J^2}:\mathfrak F_{\mathfrak R}\to \mathfrak J$ of the canonical projection $\mathfrak J\twoheadrightarrow\mathfrak F_{\mathfrak R}$ (this is the algebraic prototype of the local sections $\tau_\alpha$ constructed in Lemma~\ref{tau}); setting $\eta^i_\lambda:=\tau_\lambda(\overline{\theta^i})$ produces a system of odd coordinates, since $\tau_\lambda$ is a section of $\mathfrak J\twoheadrightarrow\mathfrak J/\mathfrak J^2$ and hence $\eta^i_\lambda\equiv\theta^i \pmod{\mathfrak J^2}$.

These two constructions are mutually inverse. If $\eta$ is a system of odd coordinates and $\lambda_\eta$ its associated splitting as above, then $\tau_{\lambda_\eta}(\overline{\theta^i})=\lambda_\eta^{-1}(\overline{\theta^i})=\eta^i$ by construction of $\lambda_\eta$, so $\eta_{\lambda_\eta}=\eta$. Conversely, if $\lambda$ is a splitting and $\eta_\lambda$ the associated coordinates, then for $a\in\mathfrak R^{\eta_\lambda}_{(k)}\subset\mathfrak J^k$ we have, by definition of $\tau_\lambda$ and the splitting property $pr_k\circ\lambda|_{\mathfrak J^k}=\pi_k$, that $\lambda(a)=\pi_k(a)$; since this is exactly how $\lambda_{\eta_\lambda}$ acts on $\mathfrak R^{\eta_\lambda}_{(k)}$, and $\mathfrak R=\bigoplus_k \mathfrak R^{\eta_\lambda}_{(k)}$, we get $\lambda_{\eta_\lambda}=\lambda$. Hence
$$
\eta\;\longleftrightarrow\;\lambda_\eta
$$
is a bijection between systems of odd coordinates and splittings of $\mathfrak R$.

\item\textbf{Euler fields correspond to systems of odd coordinates up to reparametrization.} It remains to see that $\eta\mapsto E_\eta$ descends to a bijection onto the set of Euler vector fields, i.e.\ that $E_\eta=E_{\eta'}$ if and only if $\lambda_\eta=\lambda_{\eta'}$, and that every Euler vector field arises this way. The first point is immediate: by Proposition~\ref{eulergrad}, $E_\eta$ determines and is determined by the eigenspace decomposition $\{\mathfrak R^\eta_{(k)}\}_k$, which by the previous paragraph determines and is determined by $\lambda_\eta$. For surjectivity, let $E$ be any Euler vector field on $\mathfrak R$, i.e.\ an even superderivation adapted to $\{\mathfrak J^k\}$. Since $\mathfrak J$ is nilpotent, Koszul's Lemma~\cite[Lemma 1.1(i)]{Koszul} applies with $R=\mathfrak R$, $I=\mathfrak J$, $H=E$, giving for every $k$ a decomposition $\mathfrak J^k=\mathrm{Ker}(E-k\,\mathrm{id})\oplus\mathfrak J^{k+1}$; in particular, for $k=1$, the projection $\mathfrak J\twoheadrightarrow\mathfrak F_{\mathfrak R}$ restricts to an isomorphism on $\mathrm{Ker}(E-\mathrm{id})$, so choosing $\eta^i\in\mathrm{Ker}(E-\mathrm{id})$ mapping to $\overline{\theta^i}$ produces a system of odd coordinates $\eta$ with $\mathrm{Ker}(E-\mathrm{id})=\mathfrak R^\eta_{(1)}$. Since $E$ is an even derivation, its eigenspaces satisfy $\mathrm{Ker}(E-k\,\mathrm{id})\cdot \mathrm{Ker}(E-l\,\mathrm{id})\subset \mathrm{Ker}(E-(k+l)\,\mathrm{id})$, so the $\eta^i$ generate $\mathrm{Ker}(E-k\,\mathrm{id})\supset \mathfrak R^\eta_{(k)}$ for every $k$; combined with the decomposition above and a dimension/filtration count identical to the one used in Proposition~\ref{eulergrad}, this forces $\mathrm{Ker}(E
k\,-\mathrm{id})=\mathfrak R^\eta_{(k)}$ for all $k$, whence $E=E_\eta$.
This establishes the desired bijective correspondence between Euler vector fields on $\mathfrak R$ and splittings of $\mathfrak R$, via $E_\eta\leftrightarrow \eta\leftrightarrow\lambda_\eta$.
\end{itemize}
\end{proof}

\begin{remark}
Let $\mathfrak{R}  \cong \bigwedge_{\mathfrak{C}} (\theta^1, \dots, \theta^q)= \mathfrak{C}[\theta^1, \dots, \theta^q]^\pm$ be a split $C^\infty$-superring. The following properties hold:
\begin{itemize}[leftmargin=*]
\item The canonical superideal $\mathfrak J_{\mathfrak R}=\mathfrak R\, \mathfrak R\od\cong \mathfrak R\od^2 \oplus \mathfrak R\od$, generated by the odd part $\mathfrak R\od$, induces the $\mathfrak{J}_{\mathfrak{R}}$-adic filtration:
$$
\mathfrak R=\mathfrak J_{\mathfrak R}^0 \supset \mathfrak J_{\mathfrak R}\supset \cdots \supset \mathfrak J_{\mathfrak R}^q \supset \mathfrak J_{\mathfrak R}^{q+1}=0 \, .
$$
\item As a $\mathfrak{C}$-module, the $C^\infty$-superring admits the decomposition $\mathfrak{R} = \overline{\mathfrak{R}} \oplus \mathfrak{J}_{\mathfrak{R}}$, where the reduced part $\overline{\mathfrak{R}} = \mathfrak{R}/\mathfrak{J}_{\mathfrak{R}}$ is isomorphic to $ \mathfrak{C}$. The superideal decomposes into its even and odd components as follows:
$$
\mathfrak J_{\mathfrak R}=\left( \bigoplus_{r=1}^{\lfloor q/2 \rfloor} \bigoplus_{i_1 < \cdots < i_{2r}}^{q} \mathfrak C\;\; \theta^{i_1} \cdots  \theta^{i_{2r}}\right) \oplus
\left( \bigoplus_{r=1}^{\lfloor q/2 \rfloor+1} \bigoplus_{i_1 < \cdots < i_{2r-1}}^{q} \mathfrak C\;\; \theta^{i_1} \cdots \theta^{i_{2r-1}} \right)
$$
\item The $k$-th power of the canonical superideal can be expressed in terms of the $k$-th homogeneous components of the exterior algebra as:$$\mathfrak{J}_{\mathfrak{R}}^k = \bigoplus_{j=k}^q \mathfrak{C}\;\; \theta^{i_1} \dots \theta^{i_j} .$$Specifically, in terms of parity components, $\mathfrak{J}_{\mathfrak{R}}^k$ consists of all homogeneous elements with degree $j \geq k$.

\item The Euler vector field is ``adapted" to the filtration in the sense that for any $k \geq 0$:
\begin{equation}
(E_\theta -k \; \text{id}_{\mathfrak R} )\;  \mathfrak J_{\mathfrak R}^k \subset \mathfrak J^{k+1}_{\mathfrak R} \, . 
\end{equation}
More precisely, $\mathfrak{J}_{\mathfrak{R}}^k$ admits a split-exact decomposition into eigenspaces of $E_\theta$:
\begin{equation}
\mathfrak J_{\mathfrak R}^k=\text{Ker} (  E_\theta - k \; \text{id}_{\mathfrak R}) \oplus \mathfrak J_{\mathfrak R}^{k+1} \, ,
\end{equation}
where the kernel $\text{Ker}(E_\theta - k \; id_{\mathfrak{R}})$ corresponds to the homogeneous components of degree $k$ in the exterior algebra.
\end{itemize}
\end{remark}

Koszul established the definition of an even superderivation adapted to an $I$-adic filtration for a general superring $R$ and a superideal $I$ (see \cite[Section 1]{Koszul}):
\begin{definition}
An even superderivation $H$ of $R$ is said to be adapted to the $I$-adic filtration if, for every $k \geq 0$, it satisfies:
$$
( H -k \; \text{id}_R )\;  I^k \subset I^{k+1}\, .
$$
\end{definition}
Furthermore, Koszul proved that if the superideal $I$ is nilpotent, the derivation induces a direct sum decomposition (see \cite[Lemma 1.1.(i)]{Koszul}):
$$
I^k=\text{Ker} ( H - k \; \text{id}_R) \oplus I^{k+1} \, .
$$
As a consequence, he concluded (see \cite[Lemma 1.1]{Koszul}) that the nilpotency of $I$ implies that the $I$-adic filtration of $R$ splits. That is, there exists an isomorphism $\lambda \colon R \to \text{Gr}_I \, R = \bigoplus_{j \geq 0} I^j/I^{j+1}$ such that the following diagrams commute for all $j \geq 0$:
$$\begin{tikzcd}[column sep=large]
I^k \arrow[r, "\lambda|_{I^k}"] \arrow[dr, "\pi_k"'] & \text{Gr}_I=\bigoplus_{j \geq 0} I^j/I^{j+1} \arrow[d, "pr_k"] \\
& I^k/I^{k+1}
\end{tikzcd}$$
where $\pi_k$ is the canonical projection and $pr_k$ is the projection onto the $k$-th homogeneous component.\\

By applying this to the canonical superideal $J_R$ of a superring $R$, Koszul established a bijective correspondence between the splittings of a superring and the even superderivations adapted to the $J_R$-adic filtration (see \cite[Section 3]{Koszul}).\\

In the specific context of $C^\infty$-superrings, we recover this bijective correspondence by leveraging the fact that any module over a $C^\infty$-superring $\mathfrak{R}$ is inherently a module over the underlying $\mathbb{R}$-superalgebra structure. This allows for the direct application of Koszul's results \cite{Koszul} to our framework without further modification. By designating an even superderivation adapted to the $\mathfrak{J}_{\mathfrak{R}}$-adic filtration as an abstract Euler vector field $E_{\mathfrak{J}}$, we may equivalently define a split $C^\infty$-superring as a $C^\infty$-superring equipped with a specific choice of $E_{\mathfrak{J}}$.\\

The results discussed for individual $C^\infty$-superrings in this section can be globalized to the structure sheaf of a Batchelor space. Given a Batchelor space $\mathscr{X} = (X, \mathcal{O}_{\mathscr{X}})$, we can associate with its structure sheaf an abstract Euler vector field $\mathcal{E}_{\mathcal{J}}$, defined as a global even superderivation adapted to the $\mathcal{J}_{\mathscr{X}}$-adic filtration (refer to (\ref{Jfiltration})), satisfying:
$$(\mathcal{E}_{\mathcal{J}} - k \cdot id_{\mathcal{O}_{\mathscr{X}}}) \, \mathcal{J}_{\mathscr{X}}^k \subset \mathcal{J}_{\mathscr{X}}^{k+1} \quad \text{for all } k \geq 0.$$
Koszul established this globalization procedure for the structure sheaf of supermanifolds—which are sheaves of $\mathbb{R}$-superalgebras—in \cite[Section 3]{Koszul}. He proved that there exists a bijective correspondence between the set of global splittings $\sigma \colon \mathcal{O}_{\mathscr{X}} \xrightarrow{\sim} \text{Gr} \, \mathcal{O}_{\mathscr{X}}$ and the set of abstract Euler vector fields $\mathcal{E}_{\mathcal{J}} \colon \mathcal{O}_{\mathscr{X}} \to \mathcal{O}_{\mathscr{X}}$. This globalization holds for the structure sheaf $\mathcal{O}_{\mathscr{X}}$ of a Batchelor space, as the sheaf of even superderivations adapted to the $\mathcal{J}_{\mathscr{X}}$-adic filtration is defined as a sheaf of modules over the sheaf of $C^\infty$-superrings $\mathcal{O}_{\mathscr{X}}$ by simply viewing $\mathcal{O}_{\mathscr{X}}$ as a sheaf of $\mathbb{R}$-superalgebras.

\section*{Acknowledgment}

The authors thank the anonymous referee for their valuable suggestions and corrections, which improved the quality and readability of this article.

\section*{Funding}
P. Rizzo gratefully acknowledge partial support from CODI (Universidad de Antioquia, UdeA) through project numbers 2023-62291 and 2024-76672.


\begin{thebibliography}{30}

\bibitem{Batchelor} 
M. Batchelor. \emph{The structure of supermanifolds}. Trans. Am. Math. Soc., {\bf vol. 253}, pp. 329-338. (1978).

\bibitem{Berezin}
F.~A.~Berezin \emph{Introduction to Superanalysis}.
Mathematical Physics and Applied Mathematics. Springer Dordrecht. Vol.~9, 198. 424 pp. (1987).





\bibitem {BRUZZO} 
U. Bruzzo, D. Hernández Ruipérez and A. Polishchuk. \emph{Notes on fundamental algebraic supergeometry. Hilbert and Picard superschemes.} Adv. Math., {\bf vol. 415}: 108890. (2023).

\bibitem{CNR}
Cacciatori, S.; Noja, S. and Re, R. \emph{Non Projected Calabi-Yau Supermanifolds over $\mathbb {P}^ 2$.} Math. Res. Lett. {\bf 6}. No. 24, pp: 1027-1058. (2019).

\bibitem{Carchedi} 
D. Carchedi and D. Roytenberg. \emph{On theories of superalgebras of differentiable functions}. arXiv preprint arXiv:1211.6134, (2012).

\bibitem{CCF}
C. Carmeli, L. Caston and R. Fioresi. \emph{Mathematical Foundations of Supersymmetry}. Series of Lectures in Mathematics, EMS. pp 300. (2011).


\bibitem{DonagiOtt}
R.~Donagi and N.~Ott,
\emph{Supermoduli space with Ramond punctures is not projected}, J. Geom. Phys., {\bf Vol. 220}: 105721. (2026).

\bibitem{D1} 
E. J. Dubuc. \emph{Sur les modeles de la géométrie différentielle synthétique}. Cah. Top. Géom. Diff. Cat. {\bf Vol. 20. No. 3}, pp: 231--279. (1979).

\bibitem{D2} 
E. J. Dubuc. \emph{$\C$-schemes}. Am. J. Math. {\bf 103. 4}, pp: 683--690. (1981).






\bibitem{J} 
D. Joyce. \emph{Algebraic Geometry over $\C-$rings}. 
{\bf Vol. 260}. No. 1256. Mem. Am. Math. Soc. (2019).



\bibitem{Koszul}
J.L. Koszul. \emph{Connections and splittings of supermanifolds.} Differ. Geom. Appl. {\bf Vol. 4} No.2. pp: 151-161. (1994).

\bibitem{Ler1} 
E. Lerman. \emph{Differential forms on $ C^\infty $-ringed spaces.} 
J. Geom. Phys., {\bf Vol. 196}: 105062. (2024).

\bibitem{Yau1} 
Ch. H. Liu and S. T. Yau. \emph{Further studies of the notion of differentiable maps from Azumaya/matrix supermanifolds I. The smooth case: Ramond-Neveu-Schwarz and Green-Schwarz meeting Grothendieck.} arXiv preprint arXiv:1709.08927, (2017).


\bibitem{Manin} 
Y. I. Manin. \emph{ Gauge field theory and complex geometry.} Springer Science \& Business Media, (2013).

\bibitem {MRI}
I. Moerdijk and G. E. Reyes. \emph{Rings of smooth functions and their localizations, I}. J. Algebra, {\bf 99},  pp. 324-336. (1986).


\bibitem{MR}
I. Moerdijk and G. E. Reyes. \emph{Models for smooth infinitesimal analysis}. Springer Science \& Business Media. (2013).

\bibitem{GS} 
J. A. Navarro Gonz{\'a}lez and J. B. Sancho de Salas. \emph{$\C$-Differentiable Spaces}. {\bf No. 1824}. Springer Science \& Business Media. (2003).

\bibitem{NISHIMURA} 
H. Nishimura. \emph{Synthetic differential supergeometry.} Int. J. Theo. Phy. {\bf Vol. 37}, no 11, pp. 2803-2822. (1998).

\bibitem{Noja} 
S.  Noja.  \emph{Topics in algebraic supergeometry over projective spaces}. PhD Thesis, Universita degli Studi di Milano. (2018).

\bibitem{Olarte} 
C. D. Olarte and P. Rizzo. \emph{Basic constructions over $C^{\infty}$-schemes.} J. Geom. Phys. {\bf Vol. 190}: 104852. (2023).

\bibitem{ORTG} 
C. D. Olarte, P. Rizzo and A. Torres-Gomez. \emph{$\C$-Superrings and $\C$-superschemes}. arXiv Preprint. arXiv:2508.09900 (2025).




\bibitem{Voisin}
C. Voisin. \emph{Hodge Theory and Complex Algebraic Geometry, I}. Cambridge University Press, Cambridge. (2002).

\bibitem{Westra} 
D. B. Westra \emph{ Superrings and supergroups.} PhD Thesis, University of Wien, (2009).




\bibitem{YETTER} 
D. N. Yetter. \emph{Models for synthetic supergeometry.} Cah. Top. Géom. Diff. Cat. {\bf Vol. 29}, no 2, pp: 87-108. (1988).

\end{thebibliography}
\end{document}